\documentclass[a4paper, english, numberwithinsect]{article}
\bibliographystyle{plainurl}

\usepackage{amsmath}
\usepackage{amsthm}
\usepackage{amssymb}
\usepackage{authblk}
\usepackage{hyperref}
\usepackage{mathrsfs}
\usepackage{mathtools}
\usepackage{microtype} 
\usepackage{url} 



\usepackage{todonotes}
\usepackage{subcaption}

\DeclareMathOperator{\arcsec}{arcsec}
\DeclareMathOperator*{\argmax}{arg \, max}
\DeclareMathOperator{\area}{area}
\DeclareMathOperator{\BD}{BD}
\DeclareMathOperator{\bd}{bd}
\DeclareMathOperator{\conv}{conv}

\newcommand{\R}{\mathbb{R}}

\usepackage{mathtools}
\DeclareMathOperator*{\dist}{dist}
\newcommand{\norm}[1]{\|#1\|}
\DeclareMathOperator{\VD}{VD}
\DeclareMathOperator{\VR}{VR}

\numberwithin{equation}{section}

\begin{document}

\theoremstyle{plain}
\newtheorem{proposition}{Proposition}[section]
\newtheorem{lemma}[proposition]{Lemma}
\newtheorem{theorem}[proposition]{Theorem}
\newtheorem{corollary}[proposition]{Corollary}

\theoremstyle{definition}
\newtheorem{definition}[proposition]{Definition}

\theoremstyle{remark}
\newtheorem{remark}[proposition]{Remark}

\title{On exact covering with unit disks}

\author[1]{Ji Hoon Chun}
\author[1]{Christian Kipp}
\author[1]{\mbox{Sandro Roch}}
\affil[1]{Technische Universit\"at Berlin\\ \texttt{lastname@math.tu-berlin.de}}


\maketitle

\begin{abstract}
  We study the problem of covering a given point set in the plane by unit disks so that each point is covered exactly once. We prove that~$17$ points can always be exactly covered. On the other hand, we construct a set of~$657$ points where an exact cover is not possible. 
\end{abstract}


\section{Introduction}

In 2008, Inaba~\cite{Inaba2008_1} introduced the following puzzle about covering sets of points in the plane: 

\vspace{1ex}

\noindent \textit{Show that any set of~$\mathit{10}$~points in~$\mathbb{R}^{2}$~can be covered by nonoverlapping unit disks.} 

\vspace{1ex}

\noindent Inaba solved this puzzle~\cite{Inaba2008_2, Winkler2010} with an elegant probabilistic argument (see Appendix~\ref{sec: Inaba proof} for a deterministic version). In this article, we study a relaxed version of this covering problem. Given a point set~$X\subset\mathbb{R}^2$, can we find a family~$\mathscr{D}$ of not necessarily disjoint unit disks so that each point~$\mathbf{x} \in X$ is contained in exactly one disk~$D\in\mathscr{D}$? We call such a family an \textit{exact cover} of~$X$. For example, in Figure~\ref{fig:example_primal}, the two red disks form an exact cover of the four blue points.

\begin{figure}[tb]
    \centering
    \begin{subfigure}[b]{0.49\textwidth}
        \centering
        \includegraphics[scale=0.85, page=1]{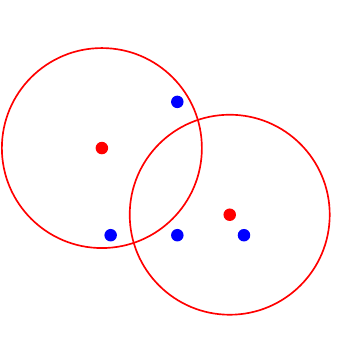}
        \caption{}
        \label{fig:example_primal}
    \end{subfigure}
    \hfill
    \begin{subfigure}[b]{0.49\textwidth}
        \centering
        \includegraphics[scale=0.85, page=2]{figures/primal_dual_example.pdf}
	\caption{}
	\label{fig:example_dual}
    \end{subfigure}
	
    \caption{\textit{Left:} Primal solution (exact covering set). \textit{Right:} Dual solution (exact hitting set).}
    \label{fig:example_primal_dual}
\end{figure}

Let~$B^{2}\coloneqq\left\{ \mathbf{x}\in\mathbb{R}^{2}\,\middle|\,\left\Vert \mathbf{x}\right\Vert <1\right\}$, where~$\norm{\cdot}$ denotes the Euclidean norm. We define an (open) \textit{disk} with \textit{center}~$\mathbf{c}\in\mathbb{R}^2$ and \textit{radius}~$r > 0$ as the set~$D_{\mathbf{c}, r} \coloneqq \mathbf{c} + r B^{2}$; if~$r = 1$ we call it the \textit{unit disk} and write~$D_{\mathbf{c}}$. 

\begin{definition}\label{def: disjoint and overlapping disk covering}
    Let~$\sigma_{2}$ be the largest~$n \in \mathbb{N}$ such that any set of~$n$ points in the plane can be covered by disjoint unit disks. Let~$\widehat{\sigma}_{2}\in \mathbb{N}$ be the corresponding number for the relaxed problem involving exact covers.
\end{definition}

As a covering using disjoint disks is also an exact covering, we have the basic relationship of~$\widehat{\sigma}_{2} \geq \sigma_{2}$. The current best known bounds for~$\sigma_{2}$ are~$12 \leq \sigma_{2} \leq 44$~\cite{AloupisHearnIwasawaUehara2012}. Aloupis, Hearn, Iwasawa, and~Uehara~(2012)~\cite{AloupisHearnIwasawaUehara2012} improved Inaba's lower bound to~$\sigma_{2} \geq 12$ through a careful analysis of the probabilistic method on one-dimensional slices of the plane. In the other direction,~$\sigma_{2}$ is finite: Intuitively, with a dense enough arrangement of points, this problem becomes similar to the problem of covering the entire set~$\conv X$, which is impossible using disjoint disks. Specific upper bounds were reduced in rapid succession from~$\sigma_{2} < 60$ by Winkler~(2010)~\cite{Winkler2010} to~$\sigma_{2} < 55$ by Elser~(2011)~\cite{Elser2011} and~$\sigma_{2} < 53$ by Okayama, Kiyomi, and~Uehara~(2012)~\cite{OkayamaKiyomiUehara2012}. Most recently, Aloupis, Hearn, Iwasawa,~and Uehara~(2012)~\cite{AloupisHearnIwasawaUehara2012} proved~$\sigma_{2} < 50$ ``by hand'' and demonstrated~$\sigma_{2} < 45$ using computer calculations. 


\subsection{Results}

In Section~\ref{sec: lower bound}, we build on some of the mentioned works on lower bounds to establish the following lower bound on~$\widehat{\sigma}_{2}$:

\begin{theorem}\label{thm: lower bound}
    We have~$\widehat{\sigma}_{2} \geq 17$. 
\end{theorem}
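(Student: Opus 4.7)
The plan is to extend the probabilistic slicing technique used by Aloupis, Hearn, Iwasawa, and Uehara to establish $\sigma_{2}\geq 12$, exploiting the relaxation that an exact cover allows unit disks to overlap provided the overlap regions contain no point of $X$. I would fix a doubly-periodic arrangement $\mathscr{P}$ of unit disks in the plane, draw a translation $\mathbf{v}$ uniformly from a fundamental domain, and lower-bound the probability $p$ that a fixed point of $\mathbb{R}^{2}$ lies in exactly one disk of $\mathscr{P}+\mathbf{v}$. Linearity of expectation then gives
\[
    \mathbb{E}\bigl[|\{\mathbf{x}\in X : \mathbf{x}\text{ lies in exactly one disk of }\mathscr{P}+\mathbf{v}\}|\bigr] \geq 17\,p,
\]
so if $p > 16/17$, there exists $\mathbf{v}$ under which every point of an arbitrary $17$-point set $X$ is uniquely covered; the sub-family of $\mathscr{P}+\mathbf{v}$ consisting of the disks that actually meet $X$ is then the desired exact cover.

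For the pattern, a natural starting point is a hexagonal lattice of unit disks at spacing $d$ slightly below $2$. For $d \in (\sqrt{3},2)$ only pairs of adjacent disks overlap (in small lens regions) while the triangular gaps between triples of disks shrink compared to the tight-packing case $d=2$. Both the gaps and the lenses contribute to $1-p$, so one optimizes $d$ to jointly minimize their combined density. A plain lattice comfortably beats $\pi/(2\sqrt{3})\approx 0.907$ but does not quite reach $16/17\approx 0.941$; further refinement in the spirit of Aloupis et al.\ is needed. Concretely, I would decompose the plane into horizontal strips of width slightly below $2$, place disks on each strip's centerline with some spacing $d$, and tune the inter-row offset so that lens regions from different rows tend to overlap uncovered gaps rather than uniquely-covered area, thereby reducing the total area that is doubly covered or uncovered.

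The main technical obstacle is a rigorous computation of $p$ for the resulting arrangement. Once the geometric parameters (strip width, intra-row spacing, and inter-row offset) are chosen, one must partition a fundamental domain according to how many disks of $\mathscr{P}$ cover each point and compute the area of the ``exactly one'' class. The case analysis is delicate because varying the parameters can create triple overlaps, alter the shape of the uncovered gap regions, or shift lens regions across strip boundaries. Once $p > 16/17$ has been verified for a concrete choice—possibly with the aid of interval-arithmetic computer computations, as in the prior work on $\sigma_{2}$—Theorem~\ref{thm: lower bound} follows immediately from the expected-value argument above.
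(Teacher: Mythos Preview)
Your proposal has a genuine gap: you never establish the key inequality $p>16/17$, and the evidence in the paper strongly suggests it fails for any arrangement of the type you describe. The paper computes exactly the quantity you want for the optimally scaled hexagonal lattice $\mathscr{A}_2^\rho$ and finds
\[
f_{\max}(0)=\max_{\rho}\frac{\area(H)}{\area(R_0^\rho\cap H)+\area(R_2^\rho\cap H)}=13.928\ldots,
\]
which corresponds to $p\approx 0.928$, well short of $16/17\approx 0.941$; this bounds $\widehat{\sigma}_2$ only by $13$, not $17$. Your fallback is to graft on the Aloupis--Hearn--Iwasawa--Uehara slicing refinement, but the paper explicitly reports (in the remark following the alternate proof of $\widehat{\sigma}_2\geq 16$) that the authors ``were unable to beneficially adapt'' that technique to the parametric setting. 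So the step on which your whole argument hinges is precisely the one that is left as a hope, and there is concrete reason to doubt it can be carried out.

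The paper's proof takes an entirely different route that you do not touch: it exploits the \emph{structure of the specific point set} $X$ rather than seeking a universal arrangement. The key idea is the Extension Argument: generalized boundary points of $X$ (points isolatable by a single unit disk) need only avoid the doubly-covered region $R_2^\rho$, not the uncovered region $R_0^\rho$, since if such a point lands in a gap one can add a fresh disk covering it alone. This asymmetry is what pushes the bound from $13$ up to $16$ (using that every non-degenerate $X$ has at least four generalized boundary points), and a further ``redundant disk'' argument---removing a superfluous disk when a small-angle vertex of $\conv X$ lands in a lens---handles the case where $\conv X$ is a triangle or quadrilateral and gets to $17$. None of this is visible in a pure density calculation that treats all points of $X$ identically.
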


The finiteness of~$\widehat{\sigma}_{2}$ can be deduced by a similar argument as for the finiteness of~$\sigma_{2}$. 
In Section~$\ref{sec: upper bound}$ we construct a close arrangement of points that cannot be exactly covered, leading to the following (rather weak) upper bound on~$\widehat{\sigma}_{2}$:

\begin{theorem}\label{thm: upper bound}
    We have~$\widehat{\sigma}_{2} < 657$. 
\end{theorem}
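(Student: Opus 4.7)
The plan is to exhibit an explicit finite point set $X \subset \R^2$ with $|X| = 657$ that admits no exact cover by unit disks. The guiding intuition is the one sketched in the introduction: when a point set is very dense, an exact covering is forced to approximately tile the convex region the set spans, but unit disks cannot be arranged to tile such a region without either leaving a gap (missing some point) or having an overlap region that contains a point of $X$.

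First, I would select a suitable region $\Omega \subset \R^2$ and populate it with a regular, dense point arrangement. Natural candidates are a hexagonal-lattice patch, a grid, or a family of concentric circular layers inside a disk whose radius exceeds $1$ by a controlled amount. The parameters (spacing $\varepsilon$ and region size) must be tuned so that (a) any single unit disk can contain only a small, roughly circular cluster of points of $X$, (b) $\Omega$ cannot be covered by a small number of unit disks, and (c) the total point count lands at exactly $657$. The oddly specific number $657$ suggests a concrete layout --- e.g., several concentric rings with prescribed populations, or a hexagonal patch of a particular radius --- rather than an asymptotic argument.

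Next, assume for contradiction that an exact cover $\mathscr{D}$ of $X$ exists. The exactness condition forces each pairwise intersection $D \cap D'$ (for distinct $D, D' \in \mathscr{D}$) to be a lens-shaped region disjoint from $X$. Because $X$ is dense in $\Omega$, such a lens must fit entirely into the narrow gaps of the point pattern, which severely restricts how two cover disks may be positioned relative to one another. I would classify the admissible local configurations by looking at which subset of $X$ a single disk can contain (essentially, which clusters are "disk-realisable") and how two such realisable clusters can be adjacent under the no-point-in-overlap rule. This local catalogue should turn out to be short because the density of $X$ allows very little maneuvering room.

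Finally, I would combine the local constraints into a global obstruction. Two natural avenues: a boundary/charging argument that tracks how cover disks must chain along the outer layer of $\Omega$ and shows that such a chain cannot close up consistently; or a Euler/area comparison between the number of disks forced by the interior points and the number of boundary points available to accommodate overlap lenses. Either route reduces the geometric problem to a combinatorial parity or counting contradiction.

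\textbf{Main obstacle.} The hardest step will be the local classification, because the freedom to let disks overlap (provided the overlap avoids $X$) greatly enlarges the configuration space compared with the disjoint-cover setting, where all earlier upper-bound proofs for $\sigma_2$ took place. The frankly ``rather weak'' bound of $657$ signals that one should not try to be tight here: I would choose $X$ with generous symmetry and density margin so that the local catalogue collapses to a handful of cases, accepting a large point count as the price of a clean proof. Verifying the final global contradiction may well require a short computer-assisted check of the resulting case analysis.
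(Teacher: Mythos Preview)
Your overall strategy---pick a dense point set inside a region of radius slightly larger than~$1$ and derive a contradiction from any putative exact cover---is the paper's strategy as well, and the specific instance the paper uses is indeed a hexagonal lattice patch inside a disk. However, the mechanism you propose for the contradiction (a local classification of admissible overlap lenses followed by a global parity or charging argument) is not what the paper does, and as stated it has a real gap: you never identify a concrete geometric invariant that forces a double-covered point. ``Classify the admissible local configurations'' is exactly the step that explodes once overlapping disks are allowed, and nothing in your outline pins down why the catalogue would be finite or why the pieces cannot be stitched together.

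The paper sidesteps this entirely by passing to the Voronoi diagram of the set~$P$ of cover-disk centres. The key observation is that if~$X$ is an $\varepsilon$-net of a region and $\mathscr{D}_P$ exactly covers~$X$, then inside $X_\varepsilon$ each Voronoi cell $\VR_P(\mathbf{p})$ is sandwiched between the disks $D_{\mathbf{p},1-\varepsilon}$ and $D_{\mathbf{p},1+\varepsilon}$ (Lemma~\ref{lemma_VR_almost_spherical}). This ``almost spherical'' shape forces a Voronoi vertex~$\mathbf{v}$ to lie inside the central disk (Lemma~\ref{lemma_vertex_in_disk}); at~$\mathbf{v}$ three centres $\mathbf{p}_1,\mathbf{p}_2,\mathbf{p}_3$ are equidistant at distance $<1+\varepsilon$, and two of them subtend an angle $\le \tfrac{2\pi}{3}$ at~$\mathbf{v}$. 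A short law-of-cosines computation then produces a point~$\mathbf{y}$ with $\norm{\mathbf{y}-\mathbf{p}_i}<1-\varepsilon$ for $i=1,2$, and any point of~$X$ within $\varepsilon$ of~$\mathbf{y}$ lies in both $D_{\mathbf{p}_1}$ and $D_{\mathbf{p}_2}$. The parameters $\varepsilon=7-\sqrt{48}$ and $R=\tfrac{3}{2}(1+\varepsilon)$ make the arithmetic close, and counting the lattice points that form the $\varepsilon$-net (a one-line computer check) gives~$657$. The Voronoi-vertex idea is the missing ingredient in your plan; once you have it, no local case analysis is needed at all.
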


For the full proofs of Theorem~\ref{thm: lower bound} and Theorem~\ref{thm: upper bound} we refer to Sections~\ref{sec: lower bound} and~\ref{sec: upper bound} respectively. 


\subsection{Relation between exact covering and exact hitting}\label{subsec: relation between exact covering and exact hitting}

We denote by~$\mathscr{X}$ the collection of all finite point sets in~$\mathbb{R}^{2}$. A point~$\mathbf{x} \in \mathbb{R}^{2}$ is contained in a unit disk~$D_{\mathbf{c}}$ centered at~$\mathbf{c} \in \mathbb{R}^{2}$ if and only if~$\mathbf{c}$ is contained in the unit disk~$D_{\mathbf{x}}$ centered at~$\mathbf{x}$. By this simple observation, the problem of exactly covering some given~$X \in \mathscr{X}$ by unit disks (\textit{primal problem}) becomes equivalent to the following \textit{dual problem}: Let~$\mathscr{D}_X \coloneqq \left\{D_{\mathbf{x}} \,\middle|\, \mathbf{x} \in X\right\}$; find a~$P \in \mathscr{X}$ such that each disk~$D \in \mathscr{D}_X$ contains exactly one point~$\mathbf{p} \in P$. See Figure~\ref{fig:example_dual} for an example of the dual problem. In the literature, such a set~$P$ is also called an \textit{exact hitting set}. A dual solution~$P$ yields the solution~$\mathscr{D} \coloneqq \left\{D_{\mathbf{p}} \,\middle|\, \mathbf{p} \in P\right\}$ to the exact covering problem. Vice versa, a solution~$\mathscr{D}$ to the exact covering problem gives a solution to the dual problem by taking the disk centers. 

In the dual perspective, the boundary circles of the disks~$\mathscr{D}_X$ decompose the plane into \textit{cells}. Observe that all points in a given cell are contained in the same set of disks, so the exact position of a dual solution point~$\mathbf{p} \in P$ is irrelevant. Hence, it is sufficient to select a set of cells so that each~$D\in\mathscr{D}_X$ contains exactly one selected cell. In the example of Figure~\ref{fig:example_dual}, the two red shaded cells form a solution. This observation shows that the solution space for the dual problem and for the exact covering problem is in fact discrete, and methods such as Knuth's Algorithm X (see~\cite{Knuth2000} or Section 7.2.2.1 in~\cite{KnuthTAOCP4B2023}), integer programming, or SAT solvers (see~\cite{junttilaKaski10}) can be used. 


\section{A lower bound}\label{sec: lower bound}

We exclude the following trivial case from our proofs in this section: If~$X$ lies on a line then~$X$ can be covered by disjoint disks. Denote by~$\mathscr{X}'$ the subset of~$\mathscr{X}$ that excludes every point set on a line. To prove Theorem~\ref{thm: lower bound}, we have to show that all~$X \in \mathscr{X}'$ with~$\lvert X\rvert \leq 17$ can be exactly covered. We combine three separate components on top of Inaba's original probabilistic proof. In Subsection~\ref{subsec: boundary points} we show that~$\widehat{\sigma}_{2} \geq \sigma_{2} + 4$. In Subsection~\ref{subsec: parametric covering} we obtain~$\widehat{\sigma}_{2} \geq 16$ using a covering version of Betke, Henk, and Wills's parametric density~\cite{BetkeHenkWills1994}, and~$\widehat{\sigma}_{2} \geq 17$ by showing that in some cases, a disk~$D$ that overlaps with~$\conv X$ can be removed from an exact cover~$\mathscr{D}$ of~$X$ so that~$\mathscr{D} \left\backslash\, \left\{D\right\}\right.$ is still an exact cover of~$X$. 

In this section, we denote the vertices of~$\conv X$ by~$\mathbf{v}^{1}, \ldots, \mathbf{v}^{k}$. We use subscripts to denote the coordinates of a vector, so for example~$\mathbf{v}^{i} = \begin{pmatrix} v_{1}^{i} \\ v_{2}^{i} \end{pmatrix}$. 


\subsection{Boundary points}\label{subsec: boundary points}

\begin{definition}\label{def: boundary point}
    Let~$X \in \mathscr{X}$ and~$\mathbf{v} \in X$. The point~$\mathbf{v}$ is a \textit{boundary point of~$X$} if~$\mathbf{v}$ is on the boundary of~$\conv X$. 
\end{definition}

Let~$X \in \mathscr{X}$ and~$\mathbf{v}^{1}, \ldots, \mathbf{v}^{k}$ be the boundary points of~$X$. L\'aszl\'o Kozma (private communication) observed that a covering~$\mathscr{D}'$ of the non-boundary points~$X' \coloneqq X \left\backslash\,\left\{\mathbf{v}^{1}, \ldots, \mathbf{v}^{k}\right\}\right.\!$ by \textit{disjoint} disks can always be extended to an exact cover of~$X$. A boundary point~$\mathbf{v}^i$ is covered by at most one disk in~$\mathscr{D}'$ because the disks are disjoint. If~$\mathbf{v}^i$ is not already covered by~$\mathscr{D}'$, then it can be covered by a new disk which contains~$\mathbf{v}^i$ but no other point of~$X$. The resulting disk configuration yields an exact cover~$\mathscr{D}$ of~$X$ (Figure~\ref{fig:kozma_method}). In particular, if~$\left|X\right| \leq \sigma_{2} + k$ then~$X$ can be exactly covered. We refer to this strategy as the \textit{Extension Argument}: 

\begin{lemma}[Extension Argument]\label{lem: Kozma method}
    Let~$X \in \mathscr{X}$ and~$k$ be the number of boundary points of~$\conv X$. 
    \begin{enumerate}
        \item If~$\left\vert X \right\vert \leq \sigma_{2} + k$ then~$X$ can be exactly covered. 
        \item If~$k \leq 2$ then~$X$ can be exactly covered regardless of~$\left\vert X \right\vert$. 
        \item We have~$\widehat{\sigma}_{2} \geq \sigma_{2} + 3$. 
    \end{enumerate}
\end{lemma}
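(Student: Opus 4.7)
The plan is to prove item~1 by an extension argument and then derive items~2 and~3 as consequences. Given $X$ with $\lvert X \rvert \leq \sigma_{2} + k$, set $X' \coloneqq X \setminus \{\mathbf{v}^{1}, \ldots, \mathbf{v}^{k}\}$, so that $\lvert X' \rvert \leq \sigma_{2}$. By the definition of $\sigma_{2}$, there is a family $\mathscr{D}'$ of pairwise disjoint unit disks covering $X'$. Disjointness ensures that each boundary point $\mathbf{v}^{i}$ lies in at most one disk of $\mathscr{D}'$. For every $\mathbf{v}^{i}$ not covered by $\mathscr{D}'$, I append a new unit disk $D^{i}$ with $D^{i} \cap X = \{\mathbf{v}^{i}\}$; the enlarged family $\mathscr{D}' \cup \{D^{i}\}$ is then an exact cover of $X$, since each interior point is covered exactly once by $\mathscr{D}'$ and each boundary point by exactly one of $\mathscr{D}'$ or its $D^{i}$.

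The technical core is constructing such an isolating disk $D^{i}$. Since $\mathbf{v}^{i} \in \bd \conv X$, there is a supporting half-plane $H \supseteq \conv X$ with $\mathbf{v}^{i}$ on $\bd H$; let $\mathbf{n}$ be the outer unit normal. Centering $D^{i}$ at $\mathbf{v}^{i} + (1-\epsilon)\mathbf{n}$ for some $\epsilon > 0$ yields an open unit disk containing $\mathbf{v}^{i}$ (at distance $1-\epsilon < 1$) whose intersection with $H$ is a circular segment of depth $\epsilon$ and chord length $2\sqrt{2\epsilon - \epsilon^{2}} = O(\sqrt{\epsilon})$ along $\bd H$ near $\mathbf{v}^{i}$. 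Since $X$ is finite, every other $\mathbf{x} \in X$ is at strictly positive distance from $\mathbf{v}^{i}$, so a sufficiently small $\epsilon$ forces the segment to contain no other point of $X$, giving $D^{i} \cap X = \{\mathbf{v}^{i}\}$.

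Items~2 and~3 then follow quickly. For item~2, $k \leq 2$ makes $X$ degenerate -- either $\lvert X \rvert \leq 2$ (trivially covered) or $X$ lies on a line, in which case a direct sweep placing disjoint unit disks along that line produces an exact cover regardless of $\lvert X \rvert$. For item~3, any $X$ with $\lvert X \rvert \leq \sigma_{2} + 3$ satisfies either $k \leq 2$ (invoke item~2) or $k \geq 3$ (invoke item~1, since $\lvert X \rvert \leq \sigma_{2} + 3 \leq \sigma_{2} + k$), and is thus exactly coverable, giving $\widehat{\sigma}_{2} \geq \sigma_{2} + 3$.

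The main obstacle is constructing $D^{i}$ when $\mathbf{v}^{i}$ is not a vertex of $\conv X$ but lies in the relative interior of an edge: in that case other points of $X$ may also sit on the supporting line $\bd H$, so the isolating argument is not automatic. The $O(\sqrt{\epsilon})$ chord-length estimate still separates $\mathbf{v}^{i}$ from the others provided $\epsilon$ is smaller than the squared minimum distance from $\mathbf{v}^{i}$ to any other point of $X$; choosing such an $\epsilon$ uniformly across the finitely many appended disks closes the argument.
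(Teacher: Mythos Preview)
Your proof is correct and follows the same strategy as the paper: remove the boundary points, cover the remainder by disjoint disks, then append an isolating disk for each uncovered boundary point, with items~2 and~3 derived as easy consequences. You give more explicit detail than the paper on constructing the isolating disk (the paper simply invokes convexity of $\conv X$ and strict convexity of $B^{2}$), but the approach is identical.
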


\begin{figure}
    \centering
    \includegraphics{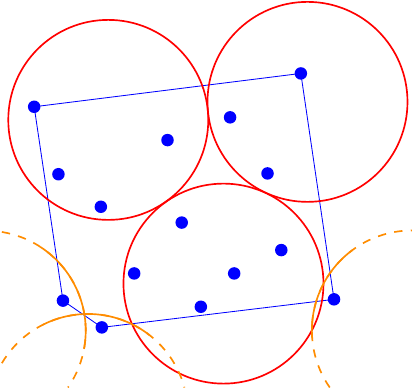}
    \caption{Proof idea of the Extension~Argument~\ref{lem: Kozma method}. We extend the red disjoint disk covering of the non-boundary points by adding a new orange disk at each uncovered boundary point. The new disks may overlap each other or existing disks.}
    \label{fig:kozma_method}
\end{figure}

\begin{proof}
    As~$X$ can be assumed to not lie on a line, we have~$k \geq 3$. Let~$X \in \mathscr{X}$ with~$\left|X\right| \leq \sigma_{2} + k$. The set~$X' \in \mathscr{X}'$ has at most~$\sigma_{2}$ points, so there exists a collection~$\mathscr{D}'$ of disjoint disks that covers~$X'$. 
    
    For each vertex~$\mathbf{v}^{i}$ of~$X$, consider the two cases below. 
    \begin{enumerate}
        \item If~$\mathbf{v}^{i}$ is already covered by some disk in~$\mathscr{D}'$, then do nothing. 
        \item If~$\mathbf{v}^{i}$ is not covered by any disk in~$\mathscr{D}'$, then add a new disk that covers~$\mathbf{v}^{i}$ but no other point of~$X$. Such a disk can always be found because~$\conv X$ is convex and~$B^{2}$ is strictly convex, but it may overlap with some existing disk(s) in~$\mathscr{D}'$ (Figure~\ref{fig:kozma_method}). 
    \end{enumerate}
    The cover~$\mathscr{D} \coloneqq \mathscr{D}' \cup \left\{\text{all new disks from case 2}\right\}$ is an exact cover of~$X$. 
    
    The third part of the Extension Argument is a direct consequence of the first two parts. 
\end{proof}

\begin{remark}
    The Extension~Argument~\ref{lem: Kozma method}, along with its generalization below (Lemma~\ref{lem: Kozma method, improved}), cannot be applied to extend an \textit{exact} cover~$\mathscr{D}'$ of~$X'$ to an exact cover of~$X$. In this case, the disks of~$\mathscr{D}'$ are allowed to overlap, so some~$\mathbf{v}^{i}$ may be contained in more than one disk of~$\mathscr{D}'$.
\end{remark} 

The Extension Argument improves the basic inequality~$\widehat{\sigma}_{2} \geq \sigma_{2}$ to~$\widehat{\sigma}_{2} \geq \sigma_{2} + 3$. Hence this lemma, combined with Inaba's and Aloupis, Hearn, Iwasawa, and Uehara's results, obtains~$\widehat{\sigma}_{2} \geq 13$ and~$\widehat{\sigma}_{2} \geq 15$ respectively. These lower bounds are limited by the case where~$\conv X$ is a triangle, since otherwise~$X$ has at least four boundary points. Therefore, we wish to relax Definition~\ref{def: boundary point} so that \textit{every}~$X \in \mathscr{X}'$ has at least four ``generalized boundary points'' that behave like boundary points, or can be exactly covered regardless of~$\left\vert X \right\vert$. 

\begin{definition}
    Let~$X \in \mathscr{X}$ and~$\mathbf{b} \in X$. The point~$\mathbf{b}$ is a \textit{generalized boundary point} of~$X$ if there exists a~$\mathbf{c} \in \mathbb{R}^{2}$ such that~$X \cap D_{\mathbf{c}} = \left\{\mathbf{b}\right\}$. 
\end{definition}

All vertices and boundary points of~$X$ are generalized boundary points of~$X$. Let~$\mathbf{b}^{1}, \ldots, \mathbf{b}^{k}$ be the generalized boundary points of~$X$, and write~$X'' \coloneqq X \left\backslash\, \left\{\mathbf{b}^{1}, \ldots, \mathbf{b}^{k}\right\} \right.$. 

Let~$k \in \mathbb{N}_{0}$. For convenience, we write~$\widehat{\sigma}_{2}\left(k\right)$ to denote the largest~$n \in \mathbb{N}$ such that all sets in~$\mathscr{X}$ with at most~$n$ points, at least~$k$ of which are generalized boundary points, can be exactly covered. For any~$k' \geq k \geq 0$, we have~$\widehat{\sigma}_{2}\left(k'\right) \geq \widehat{\sigma}_{2}\left(k\right) = \widehat{\sigma}_{2}\left(0\right) = \widehat{\sigma}_{2}$. 

\begin{lemma}[Generalized Extension Argument]\label{lem: Kozma method, improved}
    Let~$X \in \mathscr{X}'$ and~$k$ be the number of generalized boundary points of~$\conv X$. 
    \begin{enumerate}
        \item If~$\lvert X \rvert \leq \sigma_{2} + k$ then~$X$ can be exactly covered. (That is,~$\widehat{\sigma}_{2}\left(k\right) \geq \sigma_{2} + k$.) 
        \item If~$k \leq 3$ then~$X$ can be exactly covered regardless of~$\left\vert X \right\vert$. 
        \item We have~$\widehat{\sigma}_{2} \geq \sigma_{2} + 4$. 
    \end{enumerate}
\end{lemma}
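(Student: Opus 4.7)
The plan is to follow the strategy of the original Extension~Argument~\ref{lem: Kozma method}, replacing ordinary boundary points by generalized boundary points throughout, with an additional structural argument needed to handle the case $k = 3$.

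For part~1, I would set $X'' \coloneqq X \setminus \{\mathbf{b}^1, \ldots, \mathbf{b}^k\}$ so that $\lvert X'' \rvert = \lvert X \rvert - k \leq \sigma_2$, pick a disjoint unit-disk cover $\mathscr{D}'$ of $X''$ guaranteed by the definition of $\sigma_2$, and then process each generalized boundary point $\mathbf{b}^i$ exactly as in the original proof: if $\mathbf{b}^i$ already lies in some disk of $\mathscr{D}'$, then by disjointness it lies in exactly one such disk and we leave it alone; otherwise, we adjoin the witness disk $D_{\mathbf{c}^i}$ provided by the definition, which satisfies $D_{\mathbf{c}^i} \cap X = \{\mathbf{b}^i\}$. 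The bookkeeping is the same as before: points of $X''$ stay covered exactly once since no new disk meets $X''$, and each $\mathbf{b}^j$ is covered either by its unique disk in $\mathscr{D}'$ or by the new disk $D_{\mathbf{c}^j}$, but never by any other new disk $D_{\mathbf{c}^i}$ with $i \neq j$, because $D_{\mathbf{c}^i} \cap X = \{\mathbf{b}^i\}$.

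Part~2 is the main obstacle. Since $X \in \mathscr{X}'$ is non-collinear, $\conv X$ has at least three vertices and each vertex is a generalized boundary point, so $k \geq 3$ and the hypothesis $k \leq 3$ forces $k = 3$. Hence $\conv X$ must be a triangle whose vertices $\mathbf{v}^1, \mathbf{v}^2, \mathbf{v}^3$ are the \emph{only} generalized boundary points of $X$; in particular, no point of $X$ sits in the relative interior of an edge of $\conv X$ (else it would be a fourth generalized boundary point), and for every non-vertex $\mathbf{x} \in X$ the disk $D_{\mathbf{x}}$ must contain some other point of $X$ (else $D_{\mathbf{x}}$ itself would witness $\mathbf{x}$ as generalized boundary), so every non-vertex has a neighbor in $X$ at Euclidean distance strictly less than $1$. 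I expect the crucial structural claim to be that these constraints force $X$ to lie inside a single unit disk, in which case that one disk is a valid exact cover of $X$ and part~2 follows immediately. To establish the fit, I would argue by contradiction: if the smallest enclosing disk of $X$ had radius strictly greater than $1$, I would pick a point $\mathbf{z} \in X$ on its boundary and construct a unit disk centered slightly outside the enclosing disk along the outward normal at $\mathbf{z}$, then show that this pushed-out disk contains $\mathbf{z}$ but no other point of $X$ --- either contradicting the non-gen-boundary status of a non-vertex~$\mathbf{z}$, or, if all boundary points of the enclosing disk are vertices, producing a separate contradiction using Jung-type bounds on the diameter of $X$. Controlling the pushed-out disk in the presence of the close neighbours guaranteed by $k = 3$ is the step I expect to require the most care.

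Part~3 is then a direct case split on $k$: given $X \in \mathscr{X}'$ with $\lvert X \rvert \leq \sigma_2 + 4$, part~2 handles the case $k \leq 3$, while part~1 handles $k \geq 4$ because then $\lvert X \rvert \leq \sigma_2 + 4 \leq \sigma_2 + k$. In either case $X$ admits an exact cover, so $\widehat{\sigma}_2 \geq \sigma_2 + 4$.
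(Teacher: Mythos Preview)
Your arguments for Parts~1 and~3 are correct and essentially match the paper's; in fact your all-at-once version of Part~1 is slightly cleaner than the paper's recursive one, since the witness disk~$D_{\mathbf{c}^i}$ with~$D_{\mathbf{c}^i}\cap X=\{\mathbf{b}^i\}$ automatically avoids every other~$\mathbf{b}^j$.

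Part~2 has a genuine gap: your ``crucial structural claim'' is false. Take~$X=\{\mathbf{v}^1,\mathbf{v}^2,\mathbf{v}^3\}$ the vertices of an acute triangle with circumradius exactly~$1$. Then~$k=3$, yet no \emph{open} unit disk contains all three vertices (they lie on a circle of radius~$1$, and the unique such circle is the circumcircle, whose interior misses them). So~$k=3$ does not force~$X$ into a single unit disk, and your contradiction argument via the smallest enclosing disk only addresses radius strictly greater than~$1$, leaving the boundary case~$R_T=1$ unhandled. Moreover, even for~$R_T>1$ your ``pushed-out disk'' sketch is too vague: the paper's Remark and Figure~\ref{fig: bulldozer 3} show explicitly that pushing a disk in from a naive direction can fail to isolate a single point, which is why the paper introduces the carefully shaped ``bulldozer'' entering along the longest side.

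The paper's route for Part~2 is a case split on the circumradius~$R_T$ of the triangle~$T=\conv X$ (Lemmas~\ref{lem: large triangles have four generalized boundary points}--\ref{lem: small triangles have four generalized boundary points}). If~$R_T>1$ (equivalently, some side has length~$\geq 2$, or all sides are short but~$R_T>1$), a bulldozer argument produces a fourth generalized boundary point, contradicting~$k=3$. Hence~$k=3$ forces~$R_T\leq 1$. If~$R_T<1$ one unit disk covers~$X$; if~$R_T=1$ the open circumcircle covers~$T\setminus\{\mathbf{v}^1,\mathbf{v}^2,\mathbf{v}^3\}$ and three further disks pick up the vertices, giving an exact cover by \emph{four} disks rather than one. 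To repair your argument you would need both a rigorous version of the bulldozer step and this separate treatment of the~$R_T=1$ case.
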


Thanks to our definition of a generalized boundary point, the proof of Part 1 of the Generalized Extension Argument is fundamentally the same as the proof of the corresponding claim in the Extension Argument. The main difference is that to avoid a generalized boundary point being covered by more than one new disk, we add these new disks one at a time instead of all at once. 

\begin{proof}[Proof of the Generalized Extension Argument, Part 1]
    Let~$\mathbf{b}^{1}, \ldots, \mathbf{b}^{k}$ be the generalized boundary points of~$X$. Let~$X_{0} \coloneqq X''$ and~$X_{i} \coloneqq X_{i - 1} \cup \left\{\mathbf{b}^{i}\right\}$ for~$i \in \left\{1, \ldots, k\right\}$. The set~$X_{0} \in \mathscr{X}'$ has at most~$\sigma_{2}$ points, so there exists a disjoint covering~$\mathscr{D}_{0}$ of~$X_{0}$. 

    We recursively show that each~$X_{i}$ can be exactly covered. Let~$i \in \left\{1, \ldots, k\right\}$. Suppose that we have an exact cover~$\mathscr{D}_{i - 1} \supseteq \mathscr{D}_{0}$ of~$X_{i - 1}$. Consider the generalized boundary point~$\mathbf{b}^{i}$. 
    \begin{enumerate}
        \item If~$\mathbf{b}^{i}$ is covered by some disk in~$\mathscr{D}_{i - 1}$, then do nothing. The collection~$\mathscr{D}_{i} \coloneqq \mathscr{D}_{i - 1}$ is already an exact cover of~$X_{i}$. 
        \item If~$\mathbf{b}^{i}$ is not covered by any disk in~$\mathscr{D}_{i - 1}$, then add a new disk~$D_{i}$ that covers~$\mathbf{b}^{i}$ but no other point of~$X$. Such a disk can always be found by the definition of a generalized boundary point. The collection ~$\mathscr{D}_{i} \coloneqq \mathscr{D}_{i - 1} \cup \left\{D_{i}\right\}$ is an exact cover of~$X_{i}$. 
    \end{enumerate}
    Setting~$i = k$ finishes the proof. 
\end{proof}

The Generalized Extension Argument combined with Aloupis, Hearn, Iwasawa, and Uehara's lower bound implies~$\widehat{\sigma}_{2} \geq 16$. 

For the remainder of this subsection, assume that~$X \in \mathscr{X}'$ has at least four points, since if~$X$ has three or fewer points, then there is nothing to prove. If~$\conv X$ is a triangle, then we write~$T \coloneqq \conv X$ and~$T' \coloneqq T\left\backslash \, \left\{ \mathbf{v}^{1},\mathbf{v}^{2},\mathbf{v}^{3}\right\} \right.$, where~$\mathbf{v}^{1}$,~$\mathbf{v}^{2}$, and~$\mathbf{v}^{3}$ are its three vertices ($X$ may have additional boundary points). Let~$R_{T}$ be the circumradius of~$T$. 

Our proof of Part 2 of the Generalized Extension Argument consists of a more involved case analysis than for the Extension Argument. Let~$X \in \mathscr{X}'$. The following cases envelop all the possibilities for a triangular convex hull~$\conv X$, ordered by descending side lengths and circumradius. 

\begin{lemma}\label{lem: large triangles have four generalized boundary points}
    If at least one side of~$T$ has length at least~$2$, then~$X$ has at least four generalized boundary points. (In this case,~$R_{T} \geq 1$.) 
\end{lemma}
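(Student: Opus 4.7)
The plan is to produce a fourth generalized boundary point of $X$ beyond the three triangle vertices $\mathbf{v}^{1}, \mathbf{v}^{2}, \mathbf{v}^{3}$, which are automatically generalized boundary points. The case $|X| = 3$ is trivial, so I assume $|X| \geq 4$, and denote by $\mathbf{v}^{1} \mathbf{v}^{2}$ a side of length $L \geq 2$.

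First I would dispose of the case in which some $\mathbf{b} \in X \setminus \{\mathbf{v}^{1}, \mathbf{v}^{2}, \mathbf{v}^{3}\}$ lies on $\bd T$: such a $\mathbf{b}$ is a boundary point of $X$ in the sense of Definition~\ref{def: boundary point} and hence a generalized boundary point. So henceforth every non-vertex point of $X$ lies in $\inter T$. I choose coordinates with $\mathbf{v}^{1} = (-L/2, 0)$, $\mathbf{v}^{2} = (L/2, 0)$, and $\mathbf{v}^{3}$ in the open upper half-plane, and let $\mathbf{x}^{\ast}$ be a point of $X \setminus \{\mathbf{v}^{1}, \mathbf{v}^{2}\}$ of minimum $y$-coordinate. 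Since $\mathbf{v}^{3}$ is the unique point of $T$ maximizing the $y$-coordinate, the condition $|X| \geq 4$ forces $\mathbf{x}^{\ast} \neq \mathbf{v}^{3}$, and the interior hypothesis yields $x_{2}^{\ast} > 0$. The goal is then to show that $\mathbf{x}^{\ast}$ is a generalized boundary point by exhibiting an open unit disk $D_{\mathbf{c}}$ with $X \cap D_{\mathbf{c}} = \{\mathbf{x}^{\ast}\}$.

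The construction of $\mathbf{c}$ is the heart of the proof. Minimality of $x_{2}^{\ast}$ implies that every non-vertex point $\mathbf{y} \in X \setminus \{\mathbf{x}^{\ast}\}$ satisfies $y_{2} \geq x_{2}^{\ast}$, so placing $\mathbf{c}$ in the open lower half-plane keeps such $\mathbf{y}$ ``above'' $\mathbf{c}$ and, provided $\mathbf{c}$ is sufficiently close to $\mathbf{x}^{\ast}$ horizontally, outside $D_{\mathbf{c}}$. The role of the long-side hypothesis is to control the two remaining obstructions $\mathbf{v}^{1}$ and $\mathbf{v}^{2}$: since $\norm{\mathbf{v}^{1}-\mathbf{v}^{2}} \geq 2$, the open unit disks $D_{\mathbf{v}^{1}}$ and $D_{\mathbf{v}^{2}}$ are disjoint, opening up a corridor below the segment $\mathbf{v}^{1}\mathbf{v}^{2}$ that lies outside both, and from which $\mathbf{c}$ can be chosen.

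The main obstacle I expect is the sub-case in which $\mathbf{x}^{\ast}$ lies close to one of the vertices, say $\mathbf{v}^{1}$: then a center placed directly below $\mathbf{x}^{\ast}$ may itself fall inside $D_{\mathbf{v}^{1}}$. The remedy is to shift $\mathbf{c}$ along the ray from $\mathbf{v}^{1}$ through $\mathbf{x}^{\ast}$, continuing past $\mathbf{x}^{\ast}$, until $\norm{\mathbf{c} - \mathbf{v}^{1}} \geq 1$. The triangle inequality $\norm{\mathbf{x}^{\ast} - \mathbf{v}^{1}} + \norm{\mathbf{x}^{\ast} - \mathbf{v}^{2}} \geq L \geq 2$ guarantees that $\mathbf{x}^{\ast}$ cannot be within distance $1$ of both vertices simultaneously, so one of $\mathbf{v}^{1}, \mathbf{v}^{2}$ is automatically at distance $\geq 1$ from $\mathbf{x}^{\ast}$, and the extra horizontal room afforded by $L \geq 2$ allows the shift to be carried out while the shifted $\mathbf{c}$ still stays outside the other vertex disk. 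The remaining technical verification, that the shifted $\mathbf{c}$ also avoids the disks centered at the other non-vertex points of $X$, relies on choosing the magnitude of the shift carefully in light of the minimality of $x_{2}^{\ast}$.
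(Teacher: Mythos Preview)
Your proposal has a genuine gap in the ``remedy'' step, and in fact the point $\mathbf{x}^{\ast}$ with smallest $y$-coordinate need not be a generalized boundary point at all.

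Take $L=2$, $\mathbf{v}^{1}=(-1,0)$, $\mathbf{v}^{2}=(1,0)$, and suppose $\mathbf{x}^{\ast}$ lies very close to $\mathbf{v}^{2}$, say $\mathbf{x}^{\ast}=(1-\delta,\delta)$ with $\delta$ tiny. Your remedy places $\mathbf{c}$ on the ray from $\mathbf{v}^{2}$ through $\mathbf{x}^{\ast}$ at distance $\geq 1$ from $\mathbf{v}^{2}$; but since $x_{2}^{\ast}>0$, this ray points \emph{into the upper half-plane}, so $c_{2}>0$. At that moment the minimality of $x_{2}^{\ast}$ no longer shields you: another point $\mathbf{y}\in X$ with $y_{2}$ barely larger than $x_{2}^{\ast}$ can easily satisfy $\lVert\mathbf{c}-\mathbf{y}\rVert<1$. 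Concretely, with $\mathbf{x}^{\ast}=(0.99,0.01)$ your shifted centre is near $(0.29,0.71)$, and a point such as $\mathbf{y}=(0,0.02)$ lies well inside $D_{\mathbf{c}}$. More structurally, when $\mathbf{x}^{\ast}$ hugs $\mathbf{v}^{2}$ the admissible centres (those with $\lVert\mathbf{c}-\mathbf{x}^{\ast}\rVert<1\leq\lVert\mathbf{c}-\mathbf{v}^{2}\rVert$) form a thin lune that can be entirely blocked by finitely many points of $X$ sitting just above $\mathbf{x}^{\ast}$; the paper remarks on exactly this phenomenon (see the discussion around Figure~\ref{fig: bulldozer 3}).

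The paper's fix is \emph{not} to pick the lowest point but to sweep a ``bulldozer'' $\BD(X,t)=\bigcup_{|c|\leq v-1}D_{(c,t)}$ upward and take the first point of $X'$ it meets. Because the centres are confined to $|c_{1}|\leq v-1$, every disk in the bulldozer automatically excludes both $\mathbf{v}^{1}$ and $\mathbf{v}^{2}$, so no after-the-fact remedy is needed; the price is that the selected point may differ from your $\mathbf{x}^{\ast}$ when $\mathbf{x}^{\ast}$ sits in one of the corner regions near $\mathbf{v}^{1}$ or $\mathbf{v}^{2}$. The containment $T\setminus\{\mathbf{v}^{1},\mathbf{v}^{2},\mathbf{v}^{3}\}\subset\bigcup_{t\geq -1}\BD(X,t)$, which uses that $\overline{\mathbf{v}^{1}\mathbf{v}^{2}}$ is the longest side, guarantees that some point of $X'$ is eventually hit.
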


\begin{lemma}\label{lem: medium triangles have four generalized boundary points}
    If all sides of~$T$ have length less than~$2$ and~$R_{T} > 1$, then~$X$ has at least four generalized boundary points. 
\end{lemma}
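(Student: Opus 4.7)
Here is the approach I would take. First I reduce to the main geometric subcase. If some $\mathbf{p}\in X\setminus\{\mathbf{v}^1,\mathbf{v}^2,\mathbf{v}^3\}$ lies on $\bd T$, then $\mathbf{p}$ sits in the relative interior of some edge, and the standard supporting-line construction for that edge---a unit disk tangent to the edge's supporting line on the outside of $T$, shifted inward by $\varepsilon>0$ small enough that no other point of $X$ is captured---exhibits $\mathbf{p}$ as a generalized boundary point, so together with the three vertices we already have four. Hence I may assume every non-vertex point of $X$ lies in the open interior of $T$, and I must produce one such point as a generalized boundary point.

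The decisive ingredient for the remaining subcase is a \emph{safe region} around the circumcenter $\mathbf{o}$ of $T$: for any $\mathbf{c}$ with $\|\mathbf{c}-\mathbf{o}\|\leq R_T-1$, the reverse triangle inequality gives $\|\mathbf{c}-\mathbf{v}^i\|\geq R_T-(R_T-1)=1$, so $D_\mathbf{c}$ contains no vertex of $T$. The hypothesis $R_T>1$ makes this safe region a disk of strictly positive radius $R_T-1$. Note also that $T$ cannot be right-angled (a right triangle inscribed in a circle of radius $R_T>1$ would have hypotenuse $2R_T>2$, violating the side hypothesis), so $T$ is either acute (with $\mathbf{o}\in\inter T$) or obtuse, and the safe-region argument is valid in both subcases since it does not require $\mathbf{o}\in T$. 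Moreover, for any $\mathbf{p}$ in the open interior of $T$ one has $\|\mathbf{p}-\mathbf{o}\|<R_T$, so shifting from $\mathbf{p}$ toward $\mathbf{o}$ by $\delta\coloneqq\max\{0,\|\mathbf{p}-\mathbf{o}\|-(R_T-1)\}+\varepsilon$ for small $\varepsilon>0$ yields a safe center $\mathbf{c}$ with $\|\mathbf{c}-\mathbf{p}\|<1$. Thus every interior point of $X$ is captured by some unit disk that avoids all three vertices of $T$.

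To upgrade this to a full isolation (excluding also the remaining interior points of $X$), I would select $\mathbf{p}\in X\setminus\{\mathbf{v}^1,\mathbf{v}^2,\mathbf{v}^3\}$ extremal in its own radial direction from $\mathbf{o}$---concretely, the interior point of $X$ maximising $\|\mathbf{p}-\mathbf{o}\|$, which is automatically extremal in the direction $(\mathbf{p}-\mathbf{o})/\|\mathbf{p}-\mathbf{o}\|$ among interior points of $X$---and perturb the center $\mathbf{c}$ within the two-dimensional intersection of the safe disk around $\mathbf{o}$ and the open unit disk $D_\mathbf{p}$. This admissible region has positive area (ultimately thanks to $R_T-1>0$), and the extremality of $\mathbf{p}$ forces every other interior $\mathbf{q}\in X$ to satisfy $\|\mathbf{q}-\mathbf{o}\|\leq\|\mathbf{p}-\mathbf{o}\|$, which localises where the ``blocking'' centers $\mathbf{q}$ can sit relative to $\mathbf{c}$.

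The main obstacle is the last step: verifying rigorously that for the chosen $\mathbf{p}$ the admissible region cannot be completely covered by the finitely many disks $D_\mathbf{q}$ around the other interior points. The quantitative room supplied by $R_T-1>0$ is what makes the perturbation feasible; without that hypothesis the safe region collapses to a point and the argument breaks down. The hardest configurations to handle are near-equilateral triangles with $R_T$ only slightly above $1$ (a very thin safe region) and obtuse triangles in which $\mathbf{o}\notin T$, forcing the safe region to lie partly outside $T$; in both, a careful combinatorial-geometric case analysis of the planar arrangement formed by the safe disk, $D_\mathbf{p}$ and the disks $D_\mathbf{q}$ is needed to complete the proof.
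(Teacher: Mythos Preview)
Your proposal has a genuine gap at the step you yourself flag as the main obstacle, and unfortunately that step is not merely incomplete---the extremal choice you propose does not work. Take $R_T=1+\varepsilon$ with $\varepsilon>0$ small, so the safe disk around $\mathbf{o}$ has radius $\varepsilon$. Place two interior points $\mathbf{p},\mathbf{q}$ on the same ray from $\mathbf{o}$ with $\|\mathbf{p}-\mathbf{o}\|=0.5$ and $\|\mathbf{q}-\mathbf{o}\|=0.49$, so $\mathbf{p}$ is your extremal point. For \emph{every} center $\mathbf{c}$ in the safe disk one has $\|\mathbf{c}-\mathbf{q}\|\leq \varepsilon+0.49<1$, so $D_{\mathbf{c}}$ contains $\mathbf{q}$ as well as $\mathbf{p}$. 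Thus the admissible region (safe disk intersected with $D_{\mathbf{p}}$) is entirely covered by $D_{\mathbf{q}}$, and no center in it isolates $\mathbf{p}$. The bound $\|\mathbf{q}-\mathbf{o}\|\leq\|\mathbf{p}-\mathbf{o}\|$ gives no leverage here because both points sit well inside every unit disk centered in the tiny safe region.

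The paper sidesteps this difficulty entirely by a \emph{sliding} argument rather than a static extremal choice. It first reduces to acute $T$ (a right or obtuse triangle with all sides $<2$ fits in a single unit disk). The key geometric input is that when $R_T>1$, the three unit disks $\BD_{i,j}$ passing through the vertex pairs $\mathbf{v}^i,\mathbf{v}^j$ (with centers on the same side as the orthocenter) already cover $T\setminus\{\mathbf{v}^1,\mathbf{v}^2,\mathbf{v}^3\}$; this is where $R_T>1$ is used, via the orthocentric system of radius $R_T$. One of these disks therefore contains a point of $X'$, and sliding that unit disk perpendicularly to the corresponding edge until it \emph{first} meets $X$ yields a center whose disk contains exactly that first-hit point. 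The ``first hit'' mechanism is what automatically excludes all other points of $X$---something your extremality with respect to $\|\cdot-\mathbf{o}\|$ does not provide.
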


\begin{lemma}\label{lem: "unit" triangles have four generalized boundary points}
    If all sides of~$T$ have length less than~$2$ and~$R_{T} = 1$, then~$X$ can be covered by four unit disks. 
\end{lemma}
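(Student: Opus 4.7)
The plan is to exhibit an explicit cover of $T$ (hence of $X\subseteq T$) by four unit disks, taking the circumscribed unit disk of $T$ together with one unit disk at each vertex.

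Let $\mathbf{c}$ denote the circumcenter of $T$. Since $R_T=1$, the vertices $\mathbf{v}^1,\mathbf{v}^2,\mathbf{v}^3$ lie on the boundary circle of the unit disk $D_{\mathbf{c}}$, so $T\subseteq\overline{D_{\mathbf{c},1}}$. The central claim I will prove is
\[
T\setminus\{\mathbf{v}^1,\mathbf{v}^2,\mathbf{v}^3\}\subset D_{\mathbf{c}}.
\]
Any non-vertex point $\mathbf{p}\in T$ can be written as a convex combination $\mathbf{p}=\sum_{i=1}^{3}\lambda_i\mathbf{v}^i$ with at least two of the $\lambda_i$ strictly positive. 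Setting $\mathbf{u}^i\coloneqq\mathbf{v}^i-\mathbf{c}$, each $\mathbf{u}^i$ is a unit vector. The triangle inequality gives $\|\mathbf{p}-\mathbf{c}\|=\|\sum_i\lambda_i\mathbf{u}^i\|\le\sum_i\lambda_i\|\mathbf{u}^i\|=1$, with equality only when the $\mathbf{u}^i$ corresponding to positive $\lambda_i$ all lie on a common ray from the origin. Since the vertices are distinct, the corresponding unit vectors $\mathbf{u}^i$ are distinct; the only way two distinct unit vectors could be on a common line through the origin is antipodally, which would force the corresponding side of $T$ to be a diameter of length $2$. The hypothesis that every side has length less than $2$ excludes this, so the inequality is strict and $\mathbf{p}\in D_{\mathbf{c}}$.

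With the claim in hand, $D_{\mathbf{c}}$ covers $X\setminus\{\mathbf{v}^1,\mathbf{v}^2,\mathbf{v}^3\}$, and the three unit disks $D_{\mathbf{v}^1},D_{\mathbf{v}^2},D_{\mathbf{v}^3}$ trivially cover the three vertices. Thus $\{D_{\mathbf{c}},D_{\mathbf{v}^1},D_{\mathbf{v}^2},D_{\mathbf{v}^3}\}$ is a cover of $X$ by four unit disks, as required.

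The only real content is the strict-convexity argument that rules out non-vertex points of $T$ from lying on the circumcircle; the obstacle one must be careful with is precisely the antipodal/diameter edge case, which is neutralized by the assumption that all sides are strictly less than $2$. Everything else—existence of the circumcenter, $X\subseteq T$, and covering each vertex by a unit disk centered at it—is immediate.
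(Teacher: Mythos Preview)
Your argument is correct for the lemma as literally stated, and the overall strategy---use the open circumdisk $D_{\mathbf{c}}$ to cover $T'=T\setminus\{\mathbf{v}^1,\mathbf{v}^2,\mathbf{v}^3\}$ and then cover each vertex by one further unit disk---is exactly the paper's approach. Your strict-convexity computation that $T'\subset D_{\mathbf{c}}$ is a clean justification of what the paper asserts in a single sentence; incidentally, the antipodal case you worry about is harmless (distinct unit vectors can never lie on a common \emph{ray}, so the side-length hypothesis is not actually needed for that inclusion).

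The one substantive difference is your choice of vertex disks. You take $D_{\mathbf{v}^i}$ centered at the vertex, which certainly covers $\mathbf{v}^i$ but will typically contain many other points of $X$ as well; your four disks therefore form a cover but not an \emph{exact} cover. The paper instead invokes the Extension Argument to pick, for each vertex, a unit disk containing that vertex and no other point of $X$ (possible because each $\mathbf{v}^i$ is a boundary point of $\conv X$). Since this lemma is ultimately used to conclude that $X$ can be exactly covered, that choice is the one you want.
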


\begin{proof}
    The interior of the circumcircle~$D$ of~$T$ is a unit disk that covers~$T'$. Each vertex of~$T$ can be covered by a separate unit disk as in the proof of the Extension~Argument~\ref{lem: Kozma method}. 
\end{proof}

\begin{lemma}\label{lem: small triangles have four generalized boundary points}
    If all sides of~$T$ have length less than~$2$ and~$R_{T} < 1$, then~$X$ can be covered by one unit disk. 
\end{lemma}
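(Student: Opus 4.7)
The plan is to reduce the statement to the standard fact about the minimum enclosing disk of a triangle. Since~$X \subseteq T = \conv X$, it suffices to cover~$T$ by a single unit disk, so I would forget about~$X$ for the rest of the argument and work with~$T$.

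The key tool is the following elementary description of the smallest closed disk~$D^{\ast}$ containing a triangle: if the triangle is non-obtuse (i.e.\ acute or right), then~$D^{\ast}$ is the circumscribed disk and thus has radius~$R_{T}$; if the triangle is obtuse, then~$D^{\ast}$ has the longest side as a diameter and thus has radius equal to half of the longest side. I would apply this to~$T$ and split into cases. In the non-obtuse case, the hypothesis~$R_{T} < 1$ immediately gives that the radius of~$D^{\ast}$ is strictly less than~$1$. In the obtuse case, the longest side~$\ell$ of~$T$ satisfies~$\ell < 2$ by hypothesis, so again~$D^{\ast}$ has radius~$\ell/2 < 1$.

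Finally, I would observe that any closed disk of radius~$r < 1$ centered at some point~$\mathbf{c} \in \mathbb{R}^{2}$ is contained in the open unit disk~$D_{\mathbf{c}}$ with the same center. Thus~$T \subseteq D^{\ast} \subseteq D_{\mathbf{c}}$ for a suitable~$\mathbf{c}$, and since~$X \subseteq T$, the single unit disk~$D_{\mathbf{c}}$ covers~$X$.

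I do not foresee a real obstacle here; the only substantive ingredient beyond the two hypotheses is the case distinction for the minimum enclosing disk of a triangle, and each of the two cases invokes exactly one of the hypotheses. The small bookkeeping point worth mentioning explicitly is the passage from the closed disk~$D^{\ast}$ to an \emph{open} unit disk, which is the reason we need a strict inequality in both cases rather than just~$r \leq 1$.
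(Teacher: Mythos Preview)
Your argument is correct, but it is more elaborate than the paper's, which is a one-liner: cover the circumcircle of~$T$ with a unit disk. The circumscribed (closed) disk always contains~$T$, since it is convex and contains the three vertices, regardless of whether~$T$ is acute, right, or obtuse; so the case distinction for the minimum enclosing disk is unnecessary here. In particular, the hypothesis that all sides have length less than~$2$ is not actually used in the paper's proof---it is already implied by~$R_T < 1$, since every side of~$T$ is a chord of the circumcircle. Your route has the mild virtue of making explicit which hypothesis would do the work in each geometric situation, but the paper's route avoids the detour through the minimum enclosing disk entirely.
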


\begin{proof}
    Cover the circumcircle of~$T$ with a unit disk. 
\end{proof}

The first two lemmas show that for a ``large'' triangle we can always find one additional generalized boundary point~$\mathbf{b} \in X'$. This point is, roughly speaking, the closest non-vertex of~$X$ to the longest edge of~$\conv X$. Since the proofs are lengthy, we leave them for later in this section. A ``small'' triangle~$T$ lacks room for a fourth generalized boundary point since its vertices are too close to each point of~$X$ (including the other vertices). However, the other two lemmas show that every small triangle can be exactly covered by a fixed number of disks independently of~$\left\vert X \right\vert$. 

\begin{proof}[Proof of the Generalized Extension Argument~\ref{lem: Kozma method, improved}, Parts 2 and 3]
    Since~$X \in \mathscr{X}'$ has~$k \leq 3$ generalized boundary points, its convex hull of~$X$ must be a triangle. If~$R_{T} > 1$ then~$X$ has four generalized boundary points by Lemmas~\ref{lem: large triangles have four generalized boundary points}~and~\ref{lem: medium triangles have four generalized boundary points}, so~$R_{T} \leq 1$. So~$X$ can be exactly covered by Lemmas~\ref{lem: "unit" triangles have four generalized boundary points}~and~\ref{lem: small triangles have four generalized boundary points}.

    As before, Part 3 of the Generalized Extension Argument is a direct consequence of Parts 1 and 2. 
\end{proof}

The main idea for the proof of Lemma~\ref{lem: large triangles have four generalized boundary points} is as follows. Suppose that the longest side~$\overline{\mathbf{v}^{1}\mathbf{v}^{2}}$ of~$T$ has length greater than or equal to~$2$. Place a unit disk~$D$ outside~$T$ and push it through~$\overline{\mathbf{v}^{1}\mathbf{v}^{2}}$ from below, without touching either~$\mathbf{v}^{1}$ or~$\mathbf{v}^{2}$, until it reaches a point~$\mathbf{b} \in X'$. If chosen correctly,~$D$ will cover~$\mathbf{b}$ but no other point of~$X$, hence~$\mathbf{b}$ is the desired fourth generalized boundary point. 

We specify how this generalized boundary point is chosen using the following technical definition. 

\begin{definition}\label{def: bulldozer}
    Let~$X \in \mathscr{X}'$ satisfy the hypothesis of Lemma~\ref{lem: large triangles have four generalized boundary points}. Without loss of generality, position~$X$ so that~$\overline{\mathbf{v}^{1}\mathbf{v}^{2}}$ is the longest side of~$T = \conv X$,~$\mathbf{v}^{1}=\begin{pmatrix}-v \\ \hphantom{+}0\end{pmatrix}$ and~$\mathbf{v}^{2}=\begin{pmatrix}v \\ 0\end{pmatrix}$ for some~$v \geq 1$, and~$v_{2}^{3} > 0$. Let~$t \geq -1$. Define the \textit{bulldozer}\footnote{The term ``bulldozer'' and its abbreviation are from~\cite{AMDBulldozer2007}, in particular slide 23.}~$\BD\left(X,t\right)$ by (see Figure~\ref{fig: bulldozer 1}) 
    \[
        \BD\left(X,t\right)\coloneqq\bigcup_{c\in\left[-\left(v-1\right),v-1\right]}\left(\begin{pmatrix}c \\ t\end{pmatrix}+B^{2}\right).
    \]
\end{definition}

We will show that if~$\overline{\mathbf{v}^{1}\mathbf{v}^{2}}$ has length at least~$2$, i.e.,~$v \geq 1$, then by increasing~$t$, the bulldozer will eventually encounter a generalized boundary point of~$X$ distinct from its vertices. 

\begin{proof}[Proof of Lemma~\ref{lem: large triangles have four generalized boundary points}]
    Without loss of generality, position~$T$ as in the hypotheses of Definition~\ref{def: bulldozer}. 
    
    Since~$\overline{\mathbf{v}^{1}\mathbf{v}^{2}}$ is the longest side of~$T$, it follows that~$-v < v_{1}^{3} < v$ and 
    \begin{equation}
        T \left\backslash\, \left\{ \mathbf{v}^{1},\mathbf{v}^{2},\mathbf{v}^{3}\right\} \right.\subset\bigcup_{t\geq-1}\BD\left(X,t\right). \label{eq: the bulldozer bulldozes everything}
    \end{equation}
    In other words, the bulldozers~$\BD\left(X, t\right)$ can capture all points in~$T'$. Since~$T'$ contains at least one point of~$X$, there exists a point~$\mathbf{b} = \begin{pmatrix} b_{1} \\ b_{2} \end{pmatrix}$ on the boundary of~$\BD\left(X, t_{\max}\right)$ that is not~$\mathbf{v}^{1}$,~$\mathbf{v}^{2}$, or~$\mathbf{v}^{3}$. 

    We claim that~$\mathbf{b}$ is a generalized boundary point of~$X$. Let 
    \[
        t_{\max}\coloneqq\sup\left\{t \geq -1 \,\middle|\, \BD\left(X, t\right)\cap X = \emptyset\right\},
    \]
    which measures the position of the highest possible bulldozer~$\BD\left(X, t_{\max}\right)$ which does not intersect any points of~$X$. Consider the disk
    \[
        D\left(\varepsilon\right) \coloneqq \begin{pmatrix} b_{1} \\ t_{\max} + \varepsilon \end{pmatrix} + B^{2} \subseteq \BD\left(X,\, t_{\max} + \varepsilon\right).
    \]
    For a small enough positive~$\varepsilon$, this disk contains~$\mathbf{b}$ but does not intersect any other point of~$X$. 
\end{proof}

\begin{figure}
    \centering
    \includegraphics[width=4.75in]{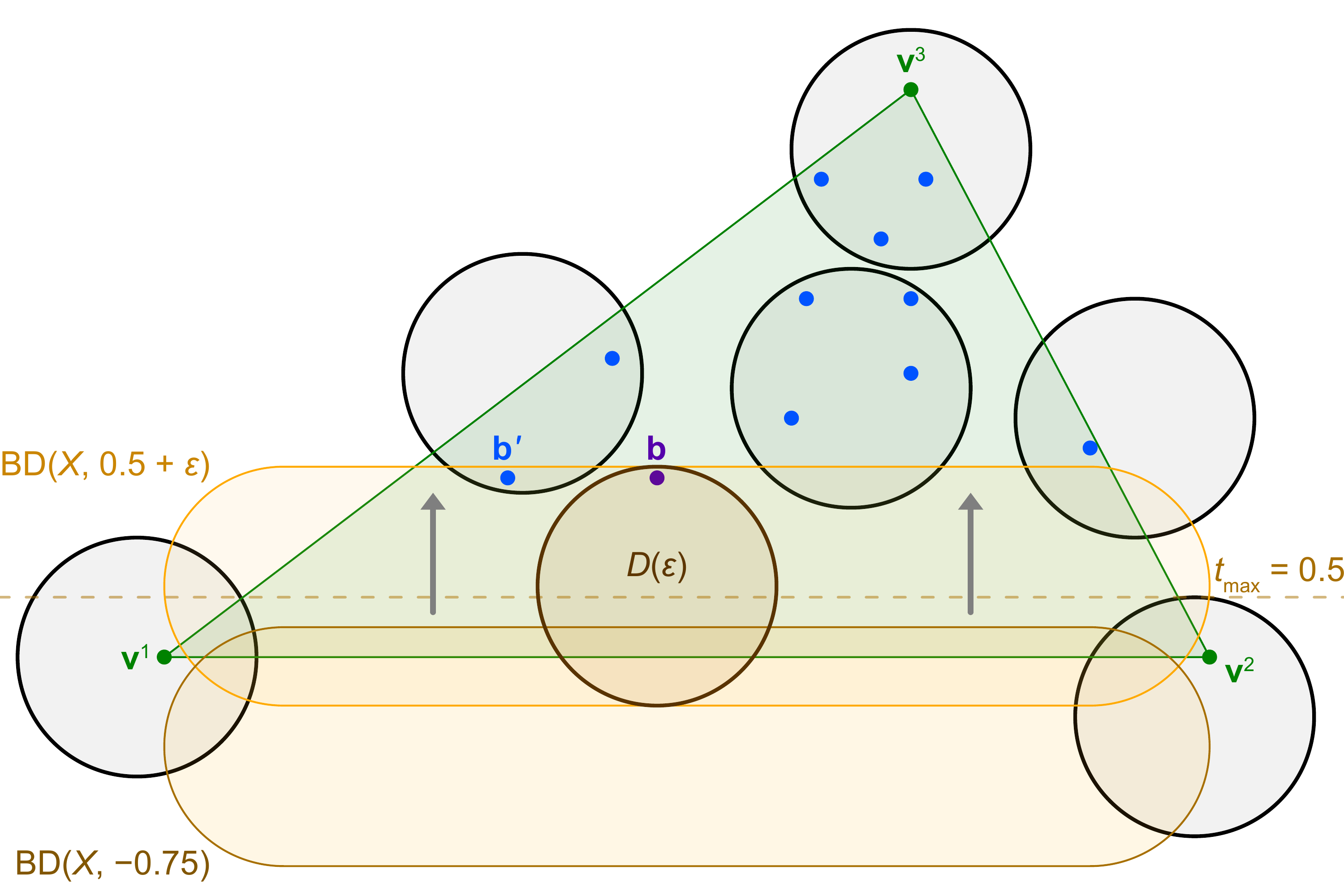}
    \caption{How to find the fourth generalized boundary point in the proof of Theorem~\ref{lem: large triangles have four generalized boundary points}. The bulldozers are~$\BD\left(X, -0.75\right)$ (medium orange) and~$\BD\left(X, t_{\max} + \varepsilon\right)$ (light orange), with~$t_{\max} = 0.5$. The generalized boundary point~$\mathbf{b} = \begin{pmatrix} -0.25 \\ \hphantom{+}1.5\hphantom{0} \end{pmatrix} \in X'$ is in dark purple and the disk~$D\left(\varepsilon\right) = \begin{pmatrix} -0.25 \\ 0.5 + \varepsilon \end{pmatrix} + B^{2}$ in dark orange. The grey arrows represent the intuition of finding~$\mathbf{b}$ by moving the bulldozer ``upwards'' until it reaches a point of~$X'$. \\
    The point~$\mathbf{b}'$ is on the boundary of~$\BD\left(X, t_{\max}\right)$, so it is also a generalized boundary point of~$X$ which we could have chosen instead of~$\mathbf{b}$.}
    \label{fig: bulldozer 1}
\end{figure}

\begin{remark}
    The~$y$-coordinate~$b_{2}$ found in the above proof is usually equal to~$t_{\max} + 1$. However, if~$\mathbf{b}$ lies on the curved part of~$\BD\left(X, t_{\max}\right)$ then~$t_{\max} < b_{2} < t_{\max} + 1$. 
\end{remark}

\begin{remark}
    If~$\mathbf{b}$ is just defined as a point of~$X'$ with smallest~$y$-coordinate, then it is not necessarily a generalized boundary point. Figure~\ref{fig: bulldozer 3} illustrates a scenario in which this point~$\mathbf{b}$ lies very close to~$\mathbf{v}^{2}$, and the given disjoint cover of~$X' \left\backslash\, \left\{\mathbf{b}\right\} \right.\!$ cannot be extended to an exact cover of~$X$. 
    
    \begin{figure}[tb]
        \centering
        \begin{subfigure}[b]{0.49\textwidth}
            \centering
            \includegraphics[width=2.3in]{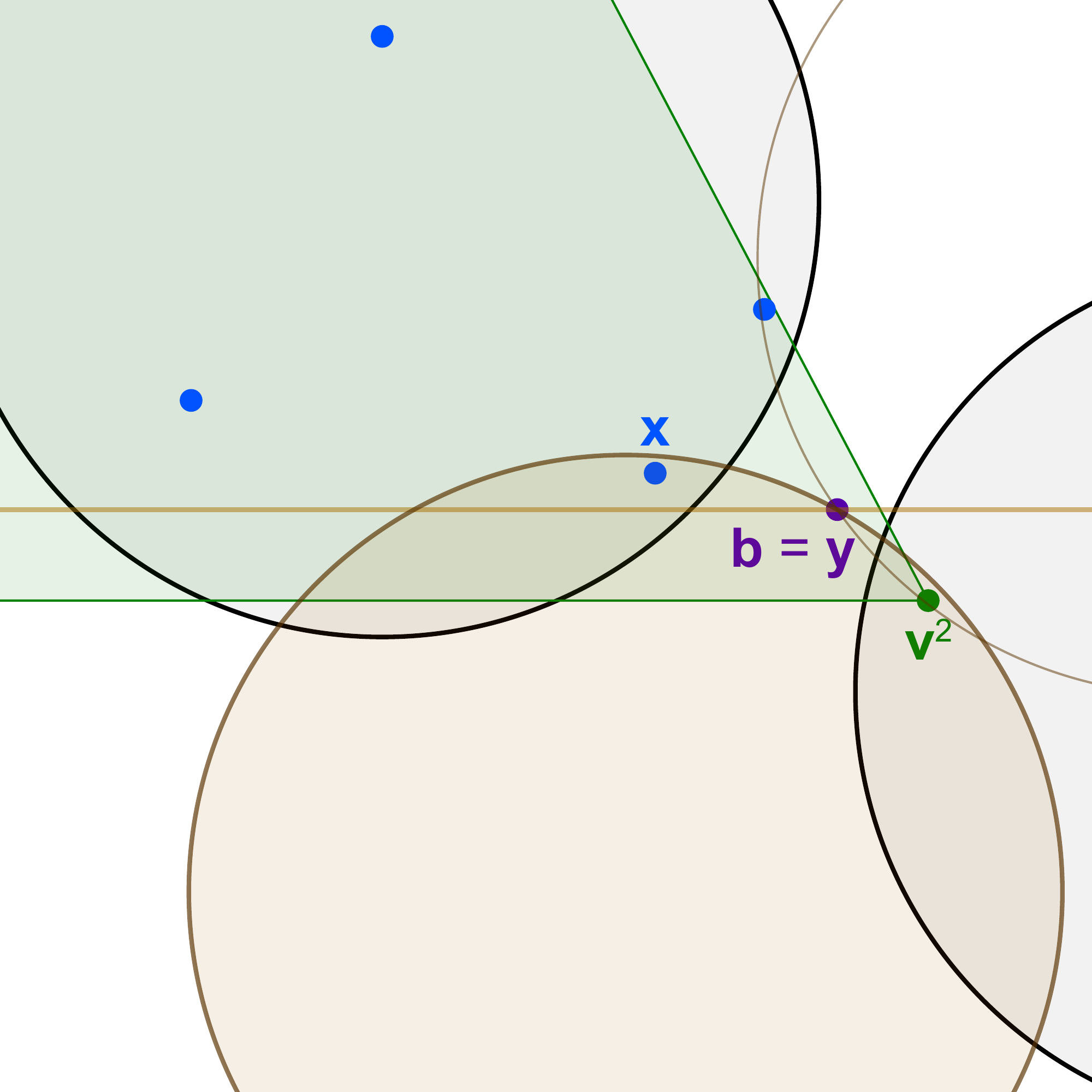}
            \caption{}
            \label{fig: bulldozer 3}
        \end{subfigure}
        \hfill
        \begin{subfigure}[b]{0.49\textwidth}
            \centering
            \includegraphics[width=2.3in]{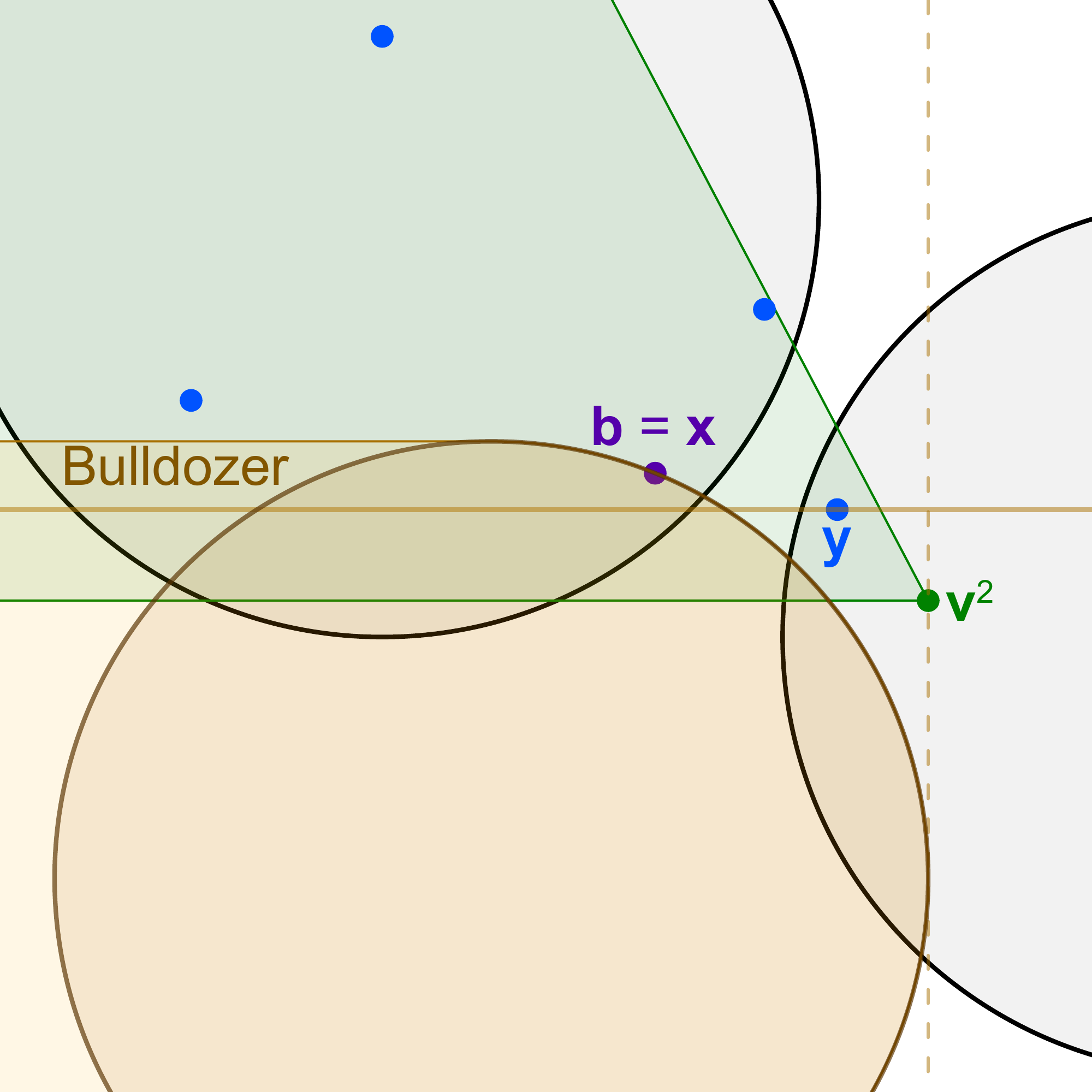}
            \caption{}
            \label{fig: bulldozer 4}
        \end{subfigure}
        \caption{An example that illustrates the need for the complicated definition of the bulldozer. The grey disks are an exact cover of~$X \left\backslash\, \left\{\mathbf{b}\right\}\right.$ while the orange disk~$D\left(\varepsilon\right)$ (see the proof of Lemma~\ref{lem: large triangles have four generalized boundary points}) is from the bulldozer. \\ 
        \textit{Left:} The point~$\mathbf{b} = \mathbf{y}$ has smallest~$y$-coordinate among all points in~$X'$. In this case,~$\mathbf{b}$ cannot be covered by a disk without~$\mathbf{x}$ or~$\mathbf{v}^{2}$ being covered by \textit{two} disks. (The faint brown disk at the top right shows that~$\mathbf{b}$ cannot be covered in this way by approaching from another edge of~$T$.) \\ 
        \textit{Right:} The definition of the bulldozer fixes the problem in Figure~\ref{fig: bulldozer 3}. The bulldozer finds a different point~$\mathbf{b} = \mathbf{x}$, and this choice of~$\mathbf{b}$ results in a different disjoint cover~$\mathscr{D}'$ of~$X \left\backslash\, \left\{\mathbf{b}\right\} \right.\!$. \\
        If~$\mathbf{b}$ is not covered by a disk in~$\mathscr{D}'$ then~$\mathscr{D}' \cup \left\{D\left(\varepsilon\right)\right\}$ is an exact cover of~$X$. \\
        If~$\mathbf{b}$ is already covered by a disk in~$\mathscr{D}'$, as in Figure~\ref{fig: bulldozer 4}, then~$\mathscr{D}'$ is an exact cover of~$X$.}
        \label{fig: bulldozer 1 4}
    \end{figure}
\end{remark}

If all three sides of~$T$ have lengths less than~$2$, then~$\BD\left(X, t\right) = \emptyset$, so a different approach is needed using the same idea. Let~$\mathbf{h}$ be the orthocenter of~$T$. For each~$1 \leq i < j \leq 3$, let~$D_{i, j}$ be the disk whose boundary passes through~$\mathbf{v}^{i}$,~$\mathbf{v}^{j}$, and~$\mathbf{h}$. The points~$\mathbf{v}^{1}$,~$\mathbf{v}^{2}$,~$\mathbf{v}^{3}$, and~$\mathbf{h}$ form an orthocentric system, and it is well known (e.g.~\cite{Wells1991}, page 165) that the radii of all the~$D_{i, j}$ are equal to~$R_{T}$. We only require~$R_{T} > 1$ for Lemma~\ref{lem: medium triangles have four generalized boundary points}, but we also explain the other cases~$R_{T} = 1$ and~$R_{T} < 1$ below for context. 

For each~$1 \leq i < j \leq 3$, let~$\BD_{i, j}$ be the \textit{unit} disk whose boundary passes through~$\mathbf{v}^{i}$ and~$\mathbf{v}^{j}$ and whose center is on the same side of the line~$\overline{\mathbf{v}^{i}\mathbf{v}^{j}}$ as the center of~$D_{i, j}$. For convenience, we will also refer to the disks~$\BD_{i, j}$ as bulldozers---see Figure~\ref{fig: circumcircle 4 1} for a comparison of~$D_{i, j}$ and~$\BD_{i, j}$. We describe the relationship between~$D_{i, j}$ and~$\BD_{i, j}$ for various values of~$R_{T}$ below. 
\begin{enumerate}
    \item If~$R_{T} > 1$, then~$\BD_{i, j}$ is smaller than~$D_{i, j}$ for all~$1 \leq i < j \leq 3$. Then each~$\BD_{i, j}$ can be pushed ``further into''~$T$ than its corresponding~$D_{i, j}$, in the sense that~$\BD_{i, j} \cap\, T \supset D_{i, j} \cap\, T$. Hence~$T' \subseteq \BD_{1, 2} \cup \BD_{1, 3} \cup \BD_{2, 3}$. Each point of~$X'$ is contained in some bulldozer so there exists a~$\BD_{i', j'}$ whose intersection with~$X'$ is nonempty. Then we keep the corresponding disk~$D_{i', j'}$, which contains at least one point of~$X'$, while removing the other two disks. The desired fourth generalized boundary point is contained in~$D_{i', j'}$. 
    \item If~$R_{T} = 1$, then~$\BD_{i, j} = D_{i, j}$ for all~$1 \leq i < j \leq 3$. Then~$\mathbf{h}$ is the only point of~$T'$ that is not covered by the union~$D_{1, 2} \cup D_{1, 3} \cup D_{2, 3}$. However,~$T$ can be covered by four disks as stated in Lemma~\ref{lem: "unit" triangles have four generalized boundary points}. 
    \item If~$R_{T} < 1$, then the disks~$\BD_{i, j}$ fail to cover some neighborhood of~$\mathbf{h}$. However,~$T$ can be covered by a single disk as stated in Lemma~\ref{lem: small triangles have four generalized boundary points}. 
\end{enumerate}

\begin{figure}[tb]
    \centering
    \begin{subfigure}[b]{0.49\textwidth}
        \centering
        \includegraphics[width=2.3in]{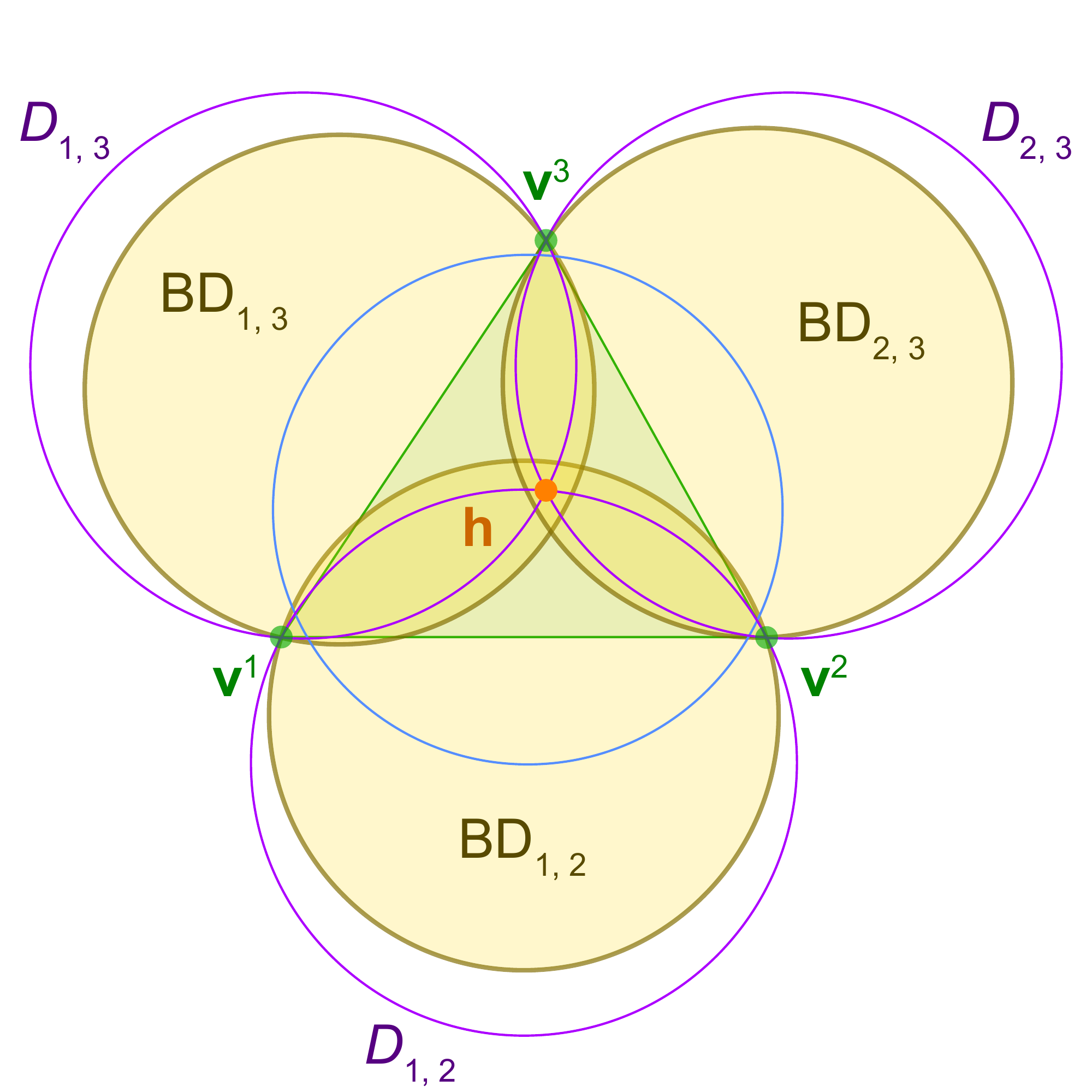}
        \caption{}
        \label{fig: circumcircle 3}
    \end{subfigure}
    \hfill
    \begin{subfigure}[b]{0.49\textwidth}
        \centering
        \includegraphics[width=2.3in]{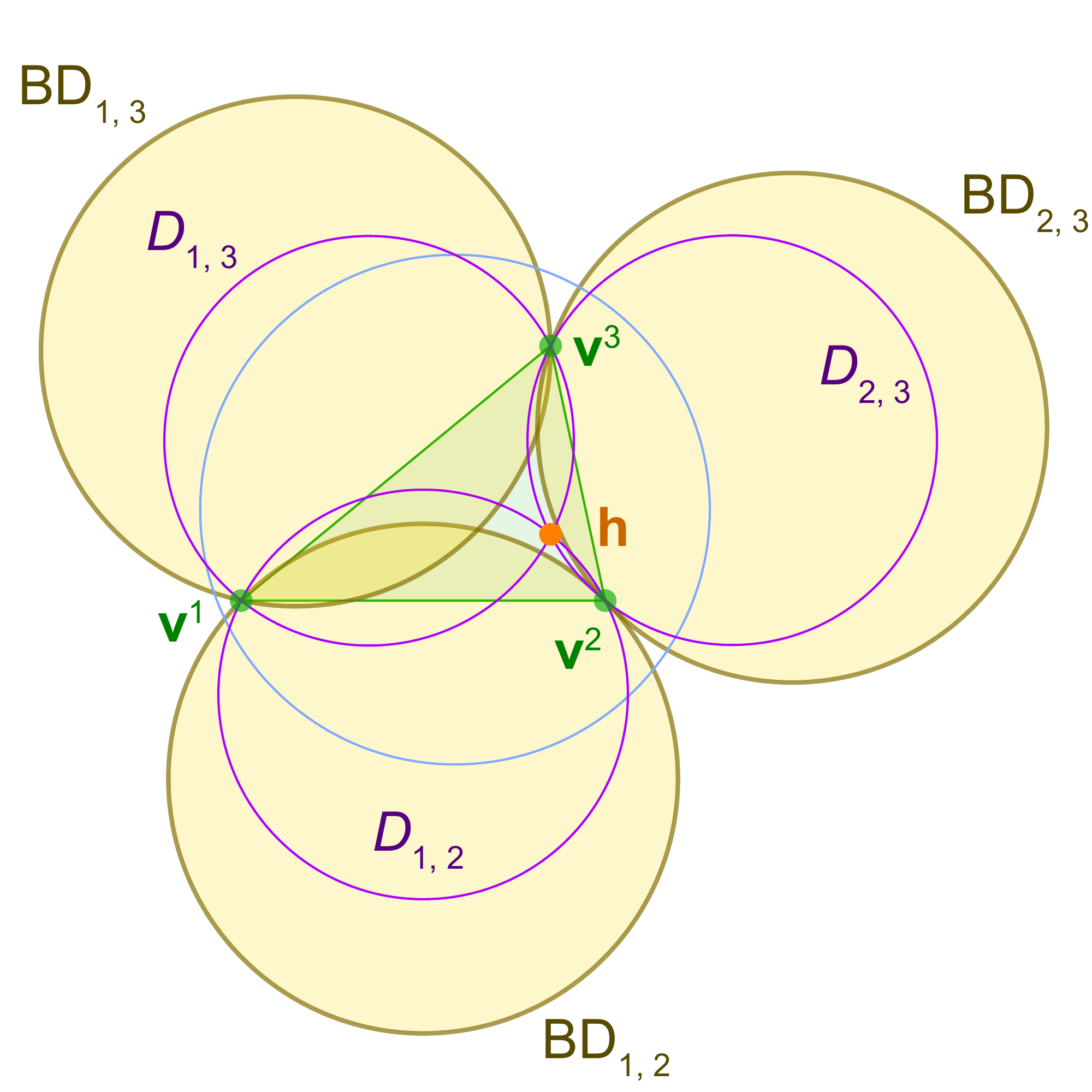}
        \caption{}
        \label{fig: circumcircle 4}
    \end{subfigure}
    \caption{A comparison of~$D_{i, j}$ and~$\BD_{i, j}$. The unit disks~$D_{i, j}$ intersect at the orthocenter~$\mathbf{h}$ and~$T \subseteq D_{1, 2} \cup D_{1, 3} \cup D_{2, 3}$. \\
    \textit{Left:} If~$R_{T} > 1$ (in this case~$\frac{8}{7} \approx 1.071$), then the bulldozers are smaller than the unit disks and hence cover~$T'$ (Lemma~\ref{lem: medium triangles have four generalized boundary points}), but a single unit disk (light blue) cannot cover~$T'$. \\
    \textit{Right:} If~$R_{T} < 1$ (in this case~$\frac{45}{56} \approx 0.804$), then the bulldozers are larger than the unit disks and cannot cover~$T'$, but one unit disk (light blue) covers~$T'$.}
    \label{fig: circumcircle 4 1}
\end{figure}

We require the following technical geometric statements for the case~$R_{T} > 1$. 

\begin{proposition}\label{prop: narrow triangles are small}
    Any right or obtuse triangle~$T$ with longest side of length less than~$2$ is contained inside a unit disk. 
\end{proposition}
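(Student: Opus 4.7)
The plan is to use the classical Thales-type characterization of the disk on a diameter: a point $P$ lies in the closed disk with diameter $\overline{AB}$ if and only if $\angle APB \geq 90^{\circ}$. Given this, I would argue directly that the disk whose diameter is the longest side of $T$ already contains all three vertices, and hence all of $T$.

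More concretely, let $\overline{\mathbf{v}^{i}\mathbf{v}^{j}}$ be a longest side of $T$, with length $c < 2$, and let $\mathbf{v}^{k}$ be the remaining vertex. Since $\overline{\mathbf{v}^{i}\mathbf{v}^{j}}$ is longest, the opposite angle $\angle\mathbf{v}^{i}\mathbf{v}^{k}\mathbf{v}^{j}$ is the largest angle of $T$. Because $T$ is right or obtuse, this angle is at least $90^{\circ}$. Let $\mathbf{m}$ be the midpoint of $\overline{\mathbf{v}^{i}\mathbf{v}^{j}}$ and let $\overline{D}$ be the closed disk of radius $c/2$ centered at~$\mathbf{m}$. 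By Thales' theorem (in its converse form), the condition $\angle\mathbf{v}^{i}\mathbf{v}^{k}\mathbf{v}^{j}\geq 90^{\circ}$ is equivalent to $\mathbf{v}^{k}\in\overline{D}$. The vertices $\mathbf{v}^{i}$ and $\mathbf{v}^{j}$ lie on the boundary of $\overline{D}$ by construction, so all three vertices lie in $\overline{D}$, and by convexity $T\subseteq\overline{D}$.

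Finally, I need to package $\overline{D}$ inside an \emph{open} unit disk, since the paper uses open disks via $B^{2}=\{\mathbf{x}:\|\mathbf{x}\|<1\}$. Since $c < 2$, the radius $c/2$ is strictly less than $1$, so the closed disk $\overline{D}$ of radius $c/2$ centered at $\mathbf{m}$ is contained in the open disk $D_{\mathbf{m}} = \mathbf{m}+B^{2}$. Composing these inclusions yields $T\subseteq D_{\mathbf{m}}$, as desired.

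I do not expect any real obstacle here; the only subtlety is being attentive to the open/closed distinction in the paper's convention (handled by the strict inequality $c<2$) and to the right-triangle boundary case, where $\mathbf{v}^{k}$ lies on $\partial\overline{D}$ rather than its interior but still inside $D_{\mathbf{m}}$ after enlarging the radius to~$1$.
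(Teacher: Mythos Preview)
Your proof is correct and follows essentially the same approach as the paper: both show that $T$ lies in the open unit disk centered at the midpoint $\mathbf{m}$ of the longest side. The paper splits into a right-triangle case (where $\mathbf{m}$ is the circumcenter) and an obtuse case (handled by embedding $T$ in a right triangle with the same hypotenuse), whereas you treat both cases at once via the Thales characterization $\angle\mathbf{v}^{i}\mathbf{v}^{k}\mathbf{v}^{j}\geq 90^{\circ}\Leftrightarrow \mathbf{v}^{k}\in\overline{D}$; the content is the same, and your handling of the open/closed distinction is appropriate.
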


\begin{proof}
    See Appendix~\ref{subsec: proofs, boundary points}. 
\end{proof}

\begin{lemma}\label{lem: three bulldozers cover T} 
    Suppose that~$T$ is an acute triangle with all side lengths less than~$2$. If~$R_{T} > 1$ then 
    \[
        T' = T \left\backslash\, \left\{ \mathbf{v}^{1}, \mathbf{v}^{2}, \mathbf{v}^{3}\right\} \right.\! \subseteq \BD_{1, 2} \cup \BD_{1, 3} \cup \BD_{2, 3}.
    \]
\end{lemma}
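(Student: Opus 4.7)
The plan is to reduce the problem to a known fact about orthocentric systems and then perform one explicit geometric comparison that exploits $R_{T} > 1$. Concretely, I would first show that $T \subseteq D_{1,2} \cup D_{1,3} \cup D_{2,3}$, and then show that for each pair $i < j$ the part of $D_{i,j}$ lying on the same side of the line $\overline{\mathbf{v}^{i}\mathbf{v}^{j}}$ as $T$ is already contained in $\BD_{i,j}$. Together these immediately yield $T' \subseteq T \subseteq \BD_{1,2} \cup \BD_{1,3} \cup \BD_{2,3}$.

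For the first step, since $T$ is acute, the orthocenter $\mathbf{h}$ lies in $\inter T$. Connecting $\mathbf{h}$ to each vertex partitions $T$ into three sub-triangles $\conv\{\mathbf{v}^{i}, \mathbf{v}^{j}, \mathbf{h}\}$, and by construction all three vertices of each such sub-triangle lie on $\partial D_{i,j}$. Convexity of $D_{i,j}$ then gives $\conv\{\mathbf{v}^{i}, \mathbf{v}^{j}, \mathbf{h}\} \subseteq D_{i,j}$, and summing over the three pairs yields $T \subseteq D_{1,2} \cup D_{1,3} \cup D_{2,3}$.

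For the second step I would fix a pair $i < j$ and place coordinates so that $\mathbf{v}^{i} = (-\ell/2, 0)$ and $\mathbf{v}^{j} = (\ell/2, 0)$ with $\ell \coloneqq \norm{\mathbf{v}^{i} - \mathbf{v}^{j}} < 2$, and so that $T \subseteq \{y \geq 0\}$. A classical property of orthocentric systems is that $D_{i,j}$ is the reflection of the circumcircle of $T$ across $\overline{\mathbf{v}^{i}\mathbf{v}^{j}}$; since $T$ is acute, its circumcenter lies in $\inter T$ and hence in $\{y > 0\}$, so the center of $D_{i,j}$ is $(0, -\sqrt{R_{T}^{2} - \ell^{2}/4})$ and, by definition, the center of $\BD_{i,j}$ is $(0, -\sqrt{1 - \ell^{2}/4})$. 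Any point $(x,y) \in D_{i,j}$ with $y \geq 0$ satisfies $x^{2} + y^{2} + 2y\sqrt{R_{T}^{2} - \ell^{2}/4} \leq \ell^{2}/4$, which in particular forces $x^{2} \leq \ell^{2}/4 < 1$; the inclusion $(x,y) \in \BD_{i,j}$ then reduces to
\[
\sqrt{R_{T}^{2} - x^{2}} - \sqrt{1 - x^{2}} \;\leq\; \sqrt{R_{T}^{2} - \ell^{2}/4} - \sqrt{1 - \ell^{2}/4}.
\]
This follows because $\varphi(t) \coloneqq \sqrt{R_{T}^{2} - t} - \sqrt{1 - t}$ is increasing on $[0, 1]$ whenever $R_{T} > 1$ (its derivative $\tfrac{1}{2\sqrt{1 - t}} - \tfrac{1}{2\sqrt{R_{T}^{2} - t}}$ is positive there), and $x^{2} \leq \ell^{2}/4$.

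The main (though modest) obstacle is the geometric comparison in the second step: one must take care to verify that acuteness of $T$ is what forces the centers of both $D_{i,j}$ and $\BD_{i,j}$ to lie strictly on the half-plane opposite $T$, since the monotonicity argument relies crucially on both disks having their centers below the $x$-axis. Once that configuration is established, the rest is a one-variable calculus check, and the inclusion $T' \subseteq \BD_{1,2} \cup \BD_{1,3} \cup \BD_{2,3}$ follows by combining the two steps.
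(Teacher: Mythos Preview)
Your approach is essentially the paper's: it too sets up coordinates with $\overline{\mathbf{v}^{i}\mathbf{v}^{j}}$ along the $x$-axis, places the centers of both $D_{i,j}$ and $\BD_{i,j}$ on the negative $y$-axis, and verifies $D_{i,j}\cap T\subseteq \BD_{i,j}$ by the very same algebraic comparison---your monotonicity of $\varphi(t)=\sqrt{R_T^2-t}-\sqrt{1-t}$ is just a repackaging of the paper's one-line substitution yielding $1-2z_2\bigl(\sqrt{R_T^2-v^2}-\sqrt{1-v^2}\bigr)\leq 1$. One minor caveat: since the $D_{i,j}$ are \emph{open} disks and $\mathbf{h}$ lies on all three boundary circles, your Step~1 conclusion $T\subseteq D_{1,2}\cup D_{1,3}\cup D_{2,3}$ is not literally true; the easy fix is to note that $\varphi$ is \emph{strictly} increasing, so in fact $\overline{D_{i,j}}\cap T'\subseteq \BD_{i,j}$ (the closure), which takes care of $\mathbf{h}$.
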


\begin{proof}
    See Appendix~\ref{subsec: proofs, boundary points}. 
\end{proof}

We now prove Lemma~\ref{lem: medium triangles have four generalized boundary points}, in which~$T$ has short sides but a large circumradius. 

\begin{proof}[Proof of Lemma~\ref{lem: medium triangles have four generalized boundary points}]
    By Proposition~\ref{prop: narrow triangles are small}, we may assume that~$T$ is an acute triangle. By Lemma~\ref{lem: three bulldozers cover T}, 
    \[
        T' \subset \BD_{1, 2} \cup \BD_{1, 3} \cup \BD_{2, 3}. 
    \]
    Pick indices~$i$ and~$j$ such that the interior of~$\BD_{i, j}\left(X\right)$ contains at least one point~$\mathbf{b} \in X'$. Without loss of generality, assume that~$i = 1$,~$j = 2$,~$\mathbf{v}^{1} = \begin{pmatrix} -v_{1} \\ \hphantom{+}0_{\hphantom{1}} \end{pmatrix}$, and~$\mathbf{v}^{2} = \begin{pmatrix} v_{1} \\ 0 \end{pmatrix}$. Let 
    \[
        t_{\max}' \coloneqq \sup\left\{t \geq -1 \,\middle|\, \left(\begin{pmatrix} 0 \\ t \end{pmatrix} + B^{2}\right) \cap X = \emptyset\right\}.
    \]
    Let~$\mathbf{b}$ be any point of~$X'$ on the boundary of~$\begin{pmatrix} 0 \\ t_{\max}' \end{pmatrix} + B^{2}$. This point is the fourth generalized boundary point of~$X$. 
\end{proof}


\subsection{A parameterized version of Inaba's proof}\label{subsec: parametric covering}

Betke, Henk, and Wills (1994)~\cite{BetkeHenkWills1994} introduced the parametric density, a variation of the packing density in which the disks are allowed to overlap, during their work on a packing problem called the Sausage Conjecture~\cite{FejesToth1975}; see~\cite{BetkeHenkWills1994, Boeroeczky2004, HenkWills2020} for further details. Let 
\begin{equation*}
A_{2}\coloneqq\mathbb{Z}\begin{pmatrix} 0 \\ 2 \end{pmatrix}+\mathbb{Z}\begin{pmatrix} \sqrt{3} \\ 1 \end{pmatrix}
\end{equation*} 
be the hexagonal lattice and~$\mathscr{A}_{2}^{\rho} \coloneqq \left\{ D_{\mathbf{c}, \rho} \subset \mathbb{R}^{2} \,\middle|\, \mathbf{c} \in A_{2}\right\}$ be the collection of disks of radius~$\rho \geq 1$ that are centered at the points of~$A_{2}$. This radius~$\rho$ is called the \textit{parameter}; the case~$\rho = 1$ reduces to the usual hexagonal packing in Inaba's proof and we write~$\mathscr{A}_{2} = \mathscr{A}_{2}^{1}$. We call the subset of~$\mathbb{R}^{2}$ covered by exactly one disk~$D_{\mathbf{c}, \rho} \in \mathscr{A}_{2}^{\rho}$ the ``good'' region of~$\mathscr{A}_{2}^{\rho}$ and its complement the ``bad'' region. An exact cover of~$X$ requires each point in~$X$ to avoid the ``bad'' region. If~$\rho > 1$, then neighboring disks of~$\mathscr{A}_{2}^{\rho}$ overlap (Figure~\ref{fig: parametric covering, 1.05, comparison}), so the ``bad'' region includes any part of the plane covered by multiple disks. The critical value for~$\rho$ minimizes the total area of the ``bad'' region and so maximizes the lower bound for~$\widehat{\sigma}_{2}$ (over all coverings of the form~$\mathscr{A}_{2}^{\rho}$). 

\begin{figure}
    \centering
    \includegraphics[width=4in]{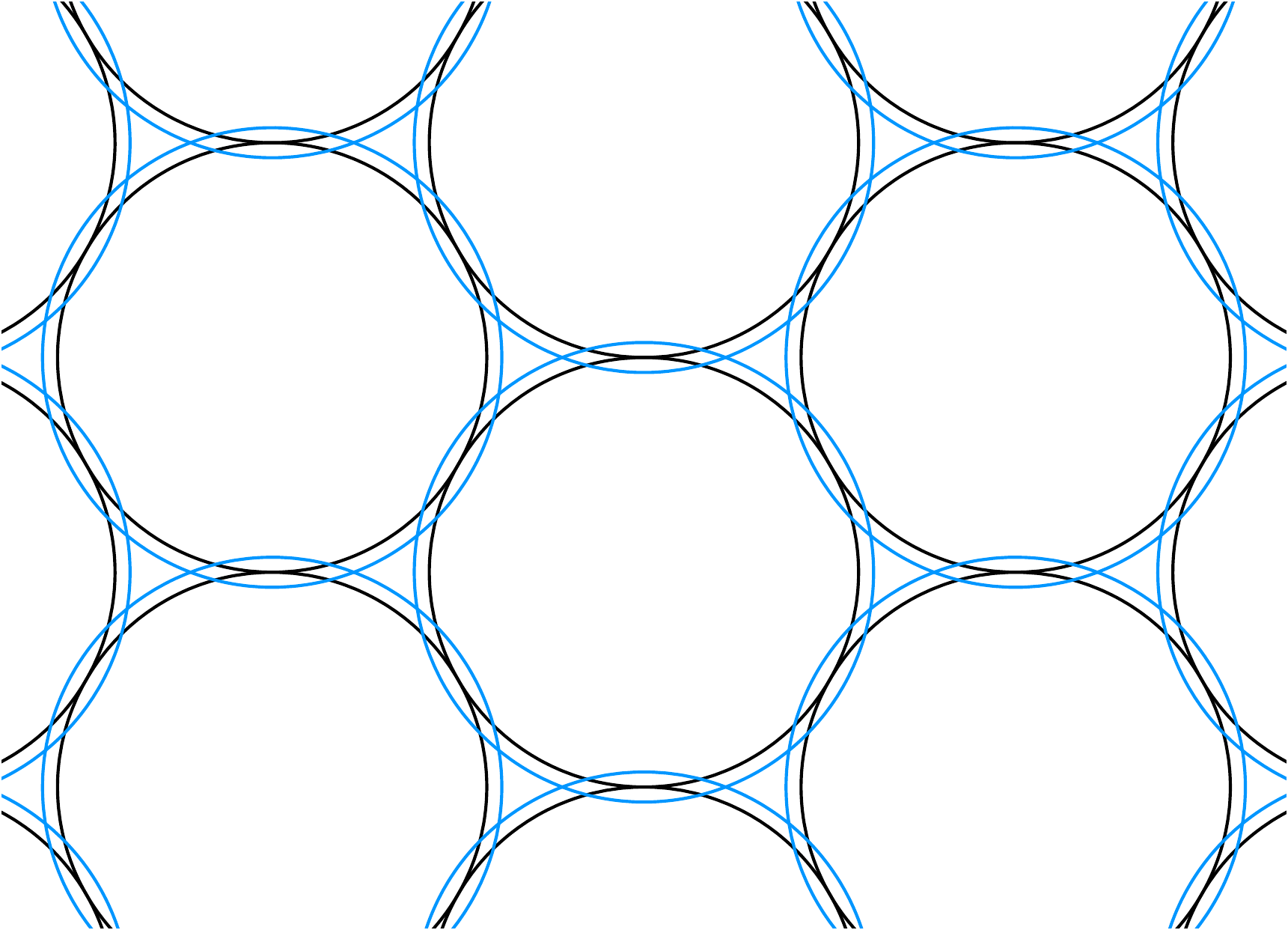}
    \caption{The disks of~$\mathscr{A}_{2}^{\rho}$ for~$\rho = 1$ (black) and~$\rho = 1.07$ (blue).}
    \label{fig: parametric covering, 1.05, comparison}
\end{figure}

We use the same argument as Inaba, but with the parameterized family~$\mathscr{A}_{2}^{\rho}$ of coverings, and combine it with the Extension Argument for another proof of~$\widehat{\sigma}_{2} \geq 16$. However,~$\mathscr{A}_{2}^{\rho}$ has another advantage over~$\mathscr{A}_{2}$. Removing one disk from~$\mathscr{A}_{2}$ strictly expands the ``bad'' region, so is never beneficial for exact covering. However, removing one disk~$D_{\mathbf{c}, \rho}$ from~$\mathscr{A}_{2}^{\rho}$ changes the subsets of~$D_{\mathbf{c}}$ which are covered by another disk from ``bad'' to ``good.'' Our eventual proof of~$\widehat{\sigma}_{2} \geq 17$ in the next subsection will use similar methods to this proof. 

Let~$C$ be a packing set and~$\rho \in \left( 0, \infty \right)$. Our formulations of disk covering have so far been restricted to \textit{unit} disks, but~$\rho B^{2}$ is not a unit disk for~$\rho \neq 1$. However, covering an~$X \in \mathscr{X}$ by~$\left\{ D_{\mathbf{c}, \rho} \subset \mathbb{R}^{2} \,\middle|\, \mathbf{c} \in C\right\}$ is equivalent to covering~$\frac{1}{\rho} X \in \mathscr{X}$ by~$\left\{ D_{\mathbf{c}, 1} \subset \mathbb{R}^{2} \,\middle|\, \mathbf{c} \in \frac{1}{\rho} C\right\}$. Hence an alternate intuition of the the parameterized family~$\mathscr{A}_{2}^{\rho}$ is keeping the disk radius at~$1$ but pushing the disks together by a factor of~$\rho$. 

If~$\rho > 1$, then the disks in~$\mathscr{A}_{2}^{\rho}$ overlap, which presents an additional complication when combining the parameterized extension to Inaba's proof with the generalized boundary point method in the previous subsection. In the proofs of the Extension Argument and its generalization, the (generalized) boundary points of~$X$ were removed from~$X$ before Inaba's probabilistic method was applied. When these (generalized) boundary points were added back to~$X$, each of them was covered by zero or one disk of~$\mathscr{A}_{2}^{\rho}$. However, it is possible for some (generalized) boundary point to be covered by \textit{two} disks of~$\mathscr{A}_{2}^{\rho}$ and neither disk can be removed without breaking the exact cover. To avoid this problem, we make sure that each generalized boundary point avoids these subsets of overlapping disks. 

\begin{definition}\label{def: Overlapping regions, parametric}
    Let~$k \in \mathbb{N}$ and~$\rho > 0$. Define 
    \[
        R_{k}^{\rho}\coloneqq\left\{ \mathbf{z}\in \mathbb{R}^{2}\,\middle|\,\mathbf{z}\text{ is in }k\text{ distinct disks of }\mathscr{A}_{2}^{\rho}\right\}.
    \]
\end{definition}

If~$\mathscr{A}_{2}^{\rho}$ is an exact cover of~$X$, then~$X$ must have the following properties:~$X \subset R_{1}^{\rho}$ and no points of~$X$ may be in~$R_{0}^{\rho}$ or~$R_{k}^{\rho}$ for all~$k \geq 2$. A corresponding result applies to any translate~$\mathbf{t} + \mathscr{A}_{2}^{\rho}$, hence for simplicity we write~$R_{k}^{\rho}$ instead of~$\mathbf{t} + R_{k}^{\rho}$ below. 

We only require~$\rho \in \left[1, \frac{2}{\sqrt{3}}\right]$ for the results in this subsection. While~$\rho$ can be any positive number, Inaba's method applied to~$\mathscr{A}_{2}^{\rho}$ for other values of~$\rho$ provide equal or worse results: 
\begin{enumerate}
    \item If~$\rho < 1$ then the empty space~$R_{0}^{\rho}$ grows, so~$\delta\left(\rho B^{2}, A_{2}\right) < \delta\left(B^{2}, A_{2}\right)$. 
    \item If~$\rho > \frac{2}{\sqrt{3}}$ then~$R_{0}^{\rho} = \emptyset$ and~$R_{1}^{\rho}$ shrinks (and eventually becomes empty), a detrimental outcome compared to~$\rho = \frac{2}{\sqrt{3}}$. 
\end{enumerate}
For~$\rho \in \left[1, \frac{2}{\sqrt{3}}\right]$ we have~$R_{k}^{\rho} = \emptyset$ for all~$k \geq 3$, hence these sets can be ignored. 

Since~$A_{2}$ is a lattice, it suffices to limit our calculations to the Voronoi region of~$A_{2}$; see Section~\ref{sec: upper bound} below for the relevant definition. 

\begin{lemma}\label{lem: parametric lower bound}
    Let~$H \coloneqq \VR_{A_{2}}\left(\mathbf{0}\right)$ be the Voronoi region of~$\mathbf{0} \in A_{2}$. For all~$k \in \mathbb{N}_{0}$ and~$\rho \in \left[1, \frac{2}{\sqrt{3}}\right]$, define 
    \begin{equation}
        f\left(\rho,k\right)\coloneqq\frac{\area\left(H\right)-k\area\left(R_{2}^{\rho}\cap H\right)}{\area\left(R_{0}^{\rho}\cap H\right)+\area\left(R_{2}^{\rho}\cap H\right)}+k. \label{eq: parametric lower bound}
    \end{equation}

    If~$X \in \mathscr{X}'$ has at most~$\left\lceil f\left(\rho, k\right)\right\rceil - 1$ points, including at least~$k$ generalized boundary points, then~$X$ can be exactly covered. 
    (That is,~$\widehat{\sigma}_{2}\left(k\right) \geq \left\lceil f\left(\rho, k\right)\right\rceil - 1$.) 
\end{lemma}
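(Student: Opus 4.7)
The plan is to run Inaba's probabilistic argument on the parametric covering $\mathscr{A}_2^\rho$, while treating the $k$ generalized boundary points separately in the spirit of the Generalized Extension Argument~\ref{lem: Kozma method, improved}. Set $X'\coloneqq X\setminus\{\mathbf{b}^1,\ldots,\mathbf{b}^k\}$ and sample a translate $\mathbf{t}$ uniformly at random from $H=\VR_{A_2}(\mathbf{0})$. For each $\mathbf{x}\in X'$ I would declare the translate \emph{bad} if $\mathbf{x}\in(R_0^\rho\cup R_2^\rho)+\mathbf{t}$, and for each $\mathbf{b}^i$ only if $\mathbf{b}^i\in R_2^\rho+\mathbf{t}$. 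This asymmetry is essential: a boundary point in $R_0^\rho+\mathbf{t}$ can be rescued with a private small disk by the definition of generalized boundary point, but a doubly-covered point cannot be rescued at all.

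For the probability computation, I would use that $R_k^\rho$ is $A_2$-periodic and that, for any $\mathbf{x}$, the translate $H-\mathbf{x}$ is again a fundamental domain of $A_2$. A change of variables $\mathbf{s}=\mathbf{t}-\mathbf{x}$ therefore gives $\Pr[\mathbf{x}\in R_k^\rho+\mathbf{t}]=\area(R_k^\rho\cap H)/\area(H)$. Applying the union bound yields total failure probability at most
\[
(|X|-k)\cdot\frac{\area(R_0^\rho\cap H)+\area(R_2^\rho\cap H)}{\area(H)}+k\cdot\frac{\area(R_2^\rho\cap H)}{\area(H)},
\]
and a direct rearrangement identifies this bound as being strictly less than $1$ if and only if $|X|<f(\rho,k)$. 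Since $|X|\le\lceil f(\rho,k)\rceil-1<f(\rho,k)$, some translate $\mathbf{t}^*$ avoids every bad event.

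From such a $\mathbf{t}^*$ I would extract the cover as follows. Every $\mathbf{x}\in X'$ lies in $R_1^\rho+\mathbf{t}^*$, hence in exactly one disk of $\mathscr{A}_2^\rho+\mathbf{t}^*$, while every $\mathbf{b}^i$ lies in $(R_0^\rho\cup R_1^\rho)+\mathbf{t}^*$, hence in at most one disk of $\mathscr{A}_2^\rho+\mathbf{t}^*$. Let $\mathscr{D}'$ be the subfamily of disks of $\mathscr{A}_2^\rho+\mathbf{t}^*$ that contain at least one point of $X$; then each point of $X'$ is covered exactly once and each $\mathbf{b}^i$ at most once by $\mathscr{D}'$. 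For every $\mathbf{b}^i$ still uncovered I would append a unit disk whose only point of $X$ is $\mathbf{b}^i$, which exists by the definition of generalized boundary point. The resulting family is an exact cover of $X$.

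The main obstacle I expect is the last step rather than the probability estimate: because $\rho>1$ allows neighboring disks of $\mathscr{A}_2^\rho$ to overlap, the Generalized Extension Argument cannot be invoked verbatim (its hypothesis demands a disjoint starting cover), and one must verify directly that restricting the tiling to $\mathscr{D}'$ and then appending private disks cannot create a double coverage at any $\mathbf{b}^i$. This reduces exactly to the condition $\mathbf{b}^i\notin R_2^\rho+\mathbf{t}^*$ that was built into the bad events for the boundary points, so the asymmetric definition of ``bad'' in step one is precisely what makes the construction in step three go through.
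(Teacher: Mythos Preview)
Your argument is correct and is essentially the paper's own proof: the paper likewise declares, for a translate of~$\mathscr{A}_2^\rho$, the ``forbidden'' sets to be~$-\mathbf{b}+R_2^\rho$ for generalized boundary points and~$-\mathbf{x}+(R_0^\rho\cup R_2^\rho)$ for the remaining points, compares total forbidden area against~$\area(H)$, and then patches uncovered boundary points with private isolating disks. One cosmetic point (which the paper also glosses over via its scaling remark): your family~$\mathscr{D}'$ consists of radius-$\rho$ disks while the appended private disks are unit disks, so to obtain a bona fide exact cover by \emph{unit} disks you should phrase the whole argument on the rescaled lattice~$\tfrac{1}{\rho}A_2$ with unit disks---the area ratios, and hence~$f(\rho,k)$, are unchanged.
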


We comment on several special cases and examples of~\eqref{eq: parametric lower bound} below and in~\eqref{eq: 0 boundary points (parametric only)}--\eqref{eq: 4 boundary points}. 
\begin{enumerate}
    \item If~$k = 0$ then 
    \begin{align}
        f\left(\rho,0\right)=\frac{\area\left(H\right)}{\area\left(R_{0}^{\rho}\cap H\right)+\area\left(R_{2}^{\rho}\cap H\right)}. \label{eq: f(rho, 0)}
    \end{align}
    This case is equivalent to not using the Generalized Extension Argument as we do not treat the generalized boundary points differently from any other point of~$X$. 
    \item If~$\rho = 1$ then 
    \begin{eqnarray}
        f\left(1,k\right) & = & \frac{\area\left(H\right)}{\area\left(R_{0}^{1}\cap H\right)}+k \label{eq: f(1, k)} \\
         & = & \frac{1}{1-\delta\left(B^{2},A^{2}\right)}+k \nonumber \\
         & = & 10.741...+k. \nonumber
    \end{eqnarray}
    This case is equivalent to only using (Part 1 of) the Generalized Extension Argument as it ignores the parameterized family~$\mathscr{A}_{2}^{\rho}$ for all~$\rho > 1$. 
    \item If~$\rho = 1$ and~$k = 0$ then we recover Inaba's method: 
    \begin{align}
        f\left(1, 0\right) = 10.741...\,. \label{eq: f(1, 0)}
    \end{align}
\end{enumerate}

\begin{proof}
    Fix~$\rho \in \left[1, \frac{2}{\sqrt{3}}\right]$. Suppose that~$X \in \mathscr{X}'$ has~$k$ generalized boundary points. We wish to find a translation of~$\mathscr{A}_{2}^{\rho}$ that meets the following conditions: 
    \begin{enumerate}
        \item Each of the~$k$ generalized boundary points~$\mathbf{b} \in X$ is not in~$R_{2}^{\rho}$, so must be in~$R_{0}^{\rho}$ or~$R_{1}^{\rho}$. 
        
        Let~$\mathscr{D}''$ be a disjoint cover of~$X''$. We cover the generalized boundary points in~$R_{0}^{\rho}$ with a collection of (possibly additional) disks as in the proof of the Generalized Extension Argument~\ref{lem: Kozma method, improved}. On the other hand, each generalized boundary point in~$R_{1}^{\rho}$ is already covered by a disk in~$\mathscr{D}''$. 
        \item Each of the~$n - k$ remaining points~$\mathbf{x} \in X$ is not in~$R_{0}^{\rho}$ or~$R_{2}^{\rho}$, so must be in~$R_{1}^{\rho}$. 
    \end{enumerate}
    The sets of forbidden translation factors---see~\eqref{eq: forbidden set}---of~$\mathbf{b}$ and~$\mathbf{x}$ as stated above are~$-\mathbf{b} + R_{2}^{\rho}$ and~$-\mathbf{x} + \left(R_{2}^{\rho} \cup R_{2}^{\rho}\right)$ respectively. 
    
    If~$n$ satisfies 
    \[
        \area\left(H\right) > k \area\left(R_{2}^{\rho} \cap H\right) + \left(n - k\right) \left(\area\left(R_{0}^{\rho} \cap H\right) + \area\left(R_{2}^{\rho} \cap H\right)\right), 
    \]
    then~$\mathscr{A}_{2}^{\rho}$ can be translated to meet conditions 1 and 2. Hence there is some exact cover~$\mathscr{D} \supseteq \mathscr{A}_{2}^{\rho}$ of~$X$. Solving the above inequality for~$n$ yields
    \[
        n < \frac{\area\left(H\right)-k\area\left(R_{2}^{\rho}\cap H\right)}{\area\left(R_{0}^{\rho}\cap H\right)+\area\left(R_{2}^{\rho}\cap H\right)}+k.
    \]
    So~$X$ can be exactly covered if~$\left\vert X \right\vert \leq \left\lceil f\left(\rho, k\right)\right\rceil - 1$, cf.~\eqref{eq: general lower bound}. 
\end{proof}

\begin{proof}[Alternate proof of~$\widehat{\sigma}_{2} \geq 16$]
    Let~$X \in \mathscr{X}'$. We use elementary geometry to calculate the areas mentioned in~\eqref{eq: parametric lower bound}: 
    \begin{alignat*}{1}
        \area\left(H\right) & =2\sqrt{3},\\
        \area\left(R_{2}^{\rho}\cap H\right) & =6\left(\rho^{2}\arcsec\left(\rho\right)-\sqrt{\rho^{2}-1}\right),\\
        \area\left(R_{0}^{\rho}\cap H\right) & =2\sqrt{3}-\pi\rho^{2}+\area\left(R_{2}^{\rho}\right).
    \end{alignat*}
    Since~$\rho \in \left[1, \frac{2}{\sqrt{3}}\right]$ was picked arbitrarily, we maximize~$f\left(\rho, k\right)$ for each~$k \in \mathbb{N}_{0}$. Let
    \[
        f_{\max}\left(k\right)\coloneqq\max_{\rho\in\left[1,\frac{2}{\sqrt{3}}\right]}f\left(\rho,k\right)
        \qquad\text{and}\qquad 
        \rho_{\max}\left(k\right)\coloneqq\argmax_{\rho\in\left[1,\frac{2}{\sqrt{3}}\right]}f\left(\rho,k\right)\text{.}
    \]
    By inspection (see Figure~\ref{fig: f}) and~\eqref{eq: f(rho, 0)}--\eqref{eq: f(1, 0)}, we obtain 
    \begin{align}
        f\left(1,0\right) & =10.741...\,, & f_{\max}\left(0\right) & =13.928...\quad\text{at}\quad\rho_{\max}\left(0\right)=1.035...\,\text{,}\label{eq: 0 boundary points (parametric only)}\\
        f\left(1,3\right) & =13.741...\,, & f_{\max}\left(3\right) & =16.152...\quad\text{at}\quad\rho_{\max}\left(3\right)=1.028...\,\text{,}\label{eq: 3 boundary points}\\
        f\left(1,4\right) & =14.741...\,, & f_{\max}\left(4\right) & =16.948...\quad\text{at}\quad\rho_{\max}\left(4\right)=1.026...\,\text{,}\label{eq: 4 boundary points}\\
        f\left(1,5\right) & =15.741...\,, & f_{\max}\left(5\right) & =17.766...\quad\text{at}\quad\rho_{\max}\left(5\right)=1.024...\,\text{.}\label{eq: 5 boundary points}
    \end{align}
    
    The lower bound of~$\widehat{\sigma}_{2} \geq 16$ follows from~\eqref{eq: 4 boundary points}, Lemma~\ref{lem: parametric lower bound}, and the Generalized Extension Argument~\ref{lem: Kozma method, improved}. 
\end{proof}

\begin{figure}
    \centering
    \includegraphics[width=4.5in]{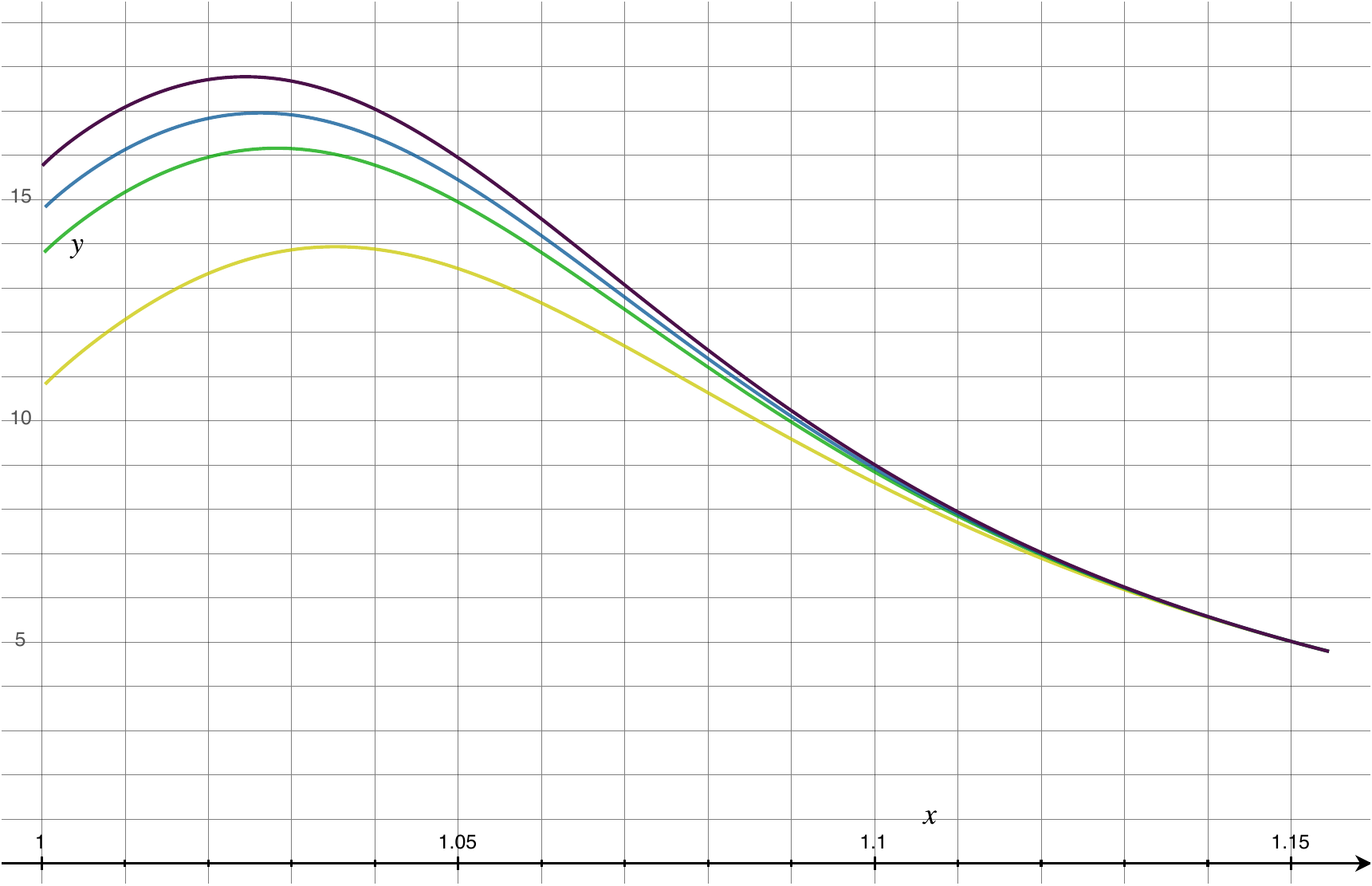}
    \caption{The graphs of~$y = f\left(\rho, k\right)$ for~$\rho = x \in \left[1, \frac{2}{\sqrt{3}}\right]$ ($x$-axis) and~$k = 0$ (light yellow),~$k = 3$ (green),~$k = 4$ (teal), and~$k = 5$ (dark purple).}
    \label{fig: f}
\end{figure}

\begin{remark}
    Unlike the proof of~$\widehat{\sigma}_{2} \geq 16$ in the previous section, we did not use Aloupis, Hearn, Iwasawa, and Uehara's result of~$\sigma_{2} \geq 12$ in this proof. Their proof used specific technical arguments on subsets of vertical lines in~$\mathbb{R}^{2}$, which we were unable to beneficially adapt to the parametric context. 
\end{remark}

The benefit of the parameterized family~$\mathscr{A}_{2}^{\rho}$ compared to the usual ``nonparametric'' hexagonal lattice~$\mathscr{A}_{2}$ is cut-and-dry: The~$\rho_{\max} > 1$ case is always better than or equal to the~$\rho = 1$ case. For each~$k \in \mathbb{N}_{0}$, we have~$f_{\max}\left(k\right) \geq f\left(1, k\right)$ by definition of~$f_{\max}$. However, as a consequence of~$\left.\frac{\partial}{\partial \rho} f\left(\rho, k\right)\right|_{\rho = 1} > 0$, we also have~$f_{\max}\left(k\right) > f\left(1, k\right)$. This inequality and its counterpart~$\left.\frac{\partial}{\partial \rho} f\left(\rho, k\right)\right|_{\rho = \frac{2}{\sqrt{3}}} < 0$ are intuitively justified by comparing~$\area\left(R_{0}^{\rho}\right)$ and~$\area\left(R_{2}^{\rho}\right)$ for~$\rho = 1$ and~$1 + \varepsilon$, and~$\rho = \frac{2}{\sqrt{3}}$ and~$\frac{2}{\sqrt{3}} - \varepsilon$ respectively. 

On the other hand, the function~$f$ is not strong enough to prove the result~$\widehat{\sigma}_{2} \geq 12 + k$ for all~$k \in \mathbb{N}$. Below we compare the two methods that obtain~$\widehat{\sigma}_{2} \geq 16$: 
\begin{enumerate}
    \item The combination of Aloupis, Hearn, Iwasawa, and Uehara's~\cite{AloupisHearnIwasawaUehara2012} lower bound and Part 1 of the Generalized Extension Argument~\ref{lem: Kozma method, improved}: 
    \begin{equation}
        \widehat{\sigma}_{2}\left(k\right) \geq 12 + k \label{eq: lower bound: AHIU + Kozma}
    \end{equation}
    \item The combination of Inaba's original lower bound (Theorem~\ref{thm:inaba}), Part 1 of the Generalized Extension Argument~\ref{lem: Kozma method, improved}, and the parameterized version of Inaba's proof (Lemma~\ref{lem: parametric lower bound}): 
    \begin{equation}
        \widehat{\sigma}_{2}\left(k\right) \geq \left\lceil f_{\max}\left(k\right) \right\rceil - 1 = 
        \begin{cases}
            13 + k & \hphantom{0}0\leq k\leq3,\\
            12 + k & \hphantom{0}4\leq k\leq11,\\
            11 + k & 12\leq k\leq45,\\
            10 + k & 46\leq k.
        \end{cases} 
        \label{eq: lower bound: Inaba + Kozma + parametric}
    \end{equation}
\end{enumerate}

As shown\footnote{
    An explicit formulation of the local maximum point~$\rho_{\max}\left(k\right)$ seems to be difficult. The equation~$\frac{\partial}{\partial \rho} f\left(\rho, k\right) = 0$ which~$\rho_{\max}\left(k\right)$ satisfies is equivalent (for our purposes) to
    \begin{equation}
        6\left(k+2\right)\arcsec\left(\rho\right)-\sqrt{3}\pi k\sqrt{\rho^{2}-1}-\pi=0. \label{eq: solve this equation to find rho_max}
    \end{equation}
    Note that~$\arcsec\left(\rho\right) = \sqrt{\rho^{2} - 1}$ at~$\rho = 1$ and~$\frac{d}{d \rho}\left(\sqrt{\rho^{2} - 1} - \arcsec\left(\rho\right)\right) = \sqrt{1 - \frac{1}{\rho^{2}}}$ is small for~$\rho \approx 1$. 
    We substitute~$\arcsec\left(\rho\right)$ for~$\sqrt{\rho^{2} - 1}$ in~\eqref{eq: solve this equation to find rho_max} so it becomes a linear polynomial in~$\arcsec\left(\rho\right)$. The relevant solution is~$\rho = \widetilde{\rho}_{\max}\left(k\right) \coloneqq \sec \left(\frac{\pi}{6 k + 12 - \sqrt{3} \pi k}\right)$, which is near~$\rho_{\max}\left(k\right)$. In Figure~\ref{fig: AHIU vs. parametric covering}, we use~$f\left(\widetilde{\rho}_{\max}\left(k\right), k\right)$ instead of~$f_{\max}\left(k\right)$. 
} 
in Figure~\ref{fig: AHIU vs. parametric covering}, for~$k \geq 12$, the Generalized Extension Argument obtains a better bound for~$\widehat{\sigma}_{2}$ than Lemma~\ref{lem: parametric lower bound}, while for~$k \leq 3$ the opposite holds. However, by Part 2 of the Generalized Extension Argument, we may assume that~$k \geq 4$. Hence the first result is actually stronger than the second in practice. Eventually the parameter does not deliver any benefit---Inaba with the generalized boundary points already achieves~$\widehat{\sigma}_{2}\left(k\right) \geq 10 + k$, as shown in~\eqref{eq: f(1, 0)}. 

\begin{figure}
    \centering
    \includegraphics[width=4.5in]{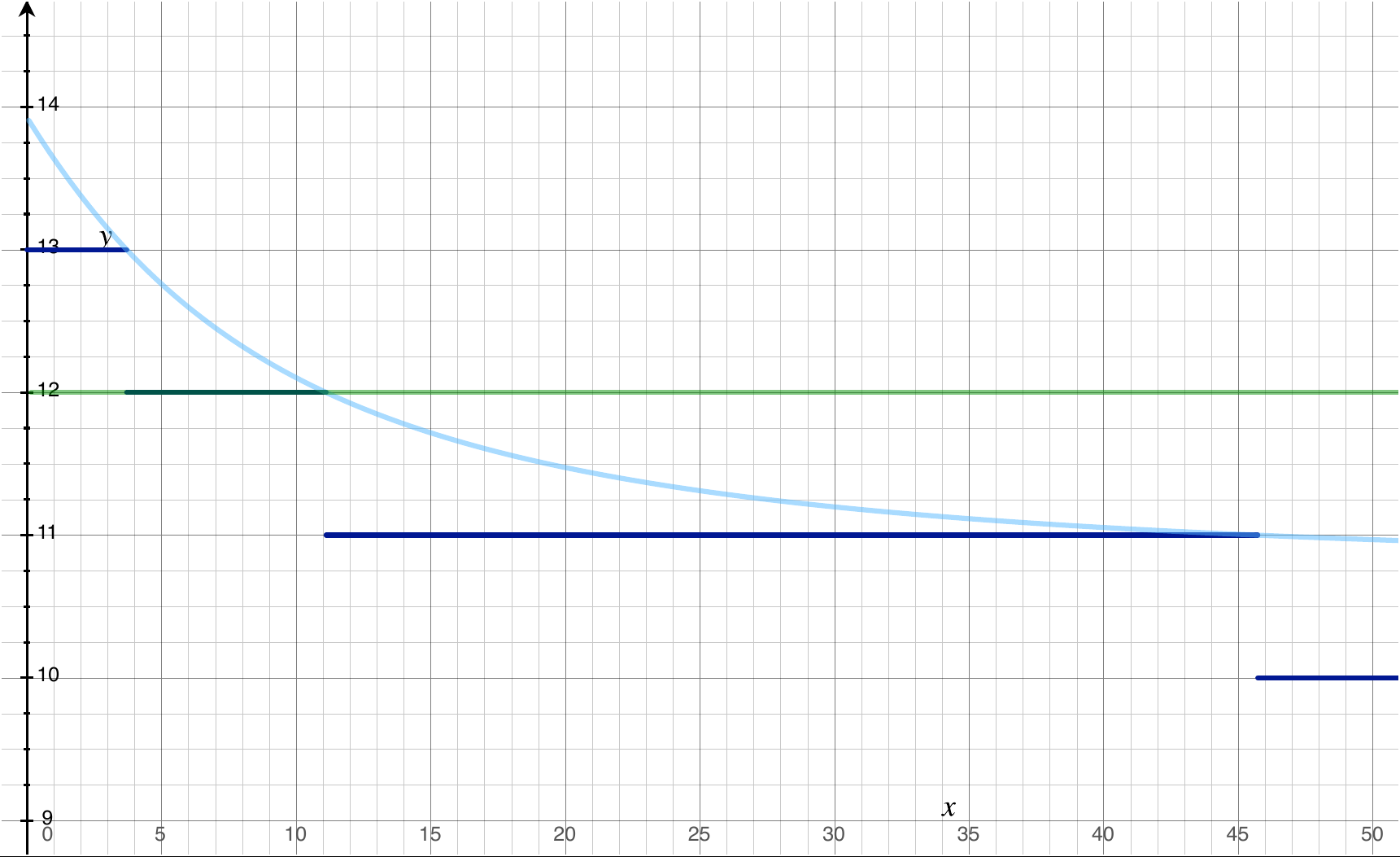}
    \caption{A comparison of close approximations to~$y = f_{\max}\left(k\right) - k$ (light blue) and~$y = \left(\left\lceil f_{\max}\left(k\right) \right\rceil - 1\right) - k$ (dark blue) with~$y = \left(12 + k\right) - k$ (green), where~$k = x$.} 
    \label{fig: AHIU vs. parametric covering}
\end{figure}

\begin{remark}
    We conjecture that the improvement that~$f_{\max}\left(k\right)$ brings to the table over~$f_{\max}\left(0\right)$ is strictly ``subadditive'' in the following sense. Let~$g\left(\rho, k\right) \coloneqq f\left(\rho, k\right) - f\left(\rho, 0\right)$ and~$g_{\max}\left(k\right) \coloneqq f_{\max}\left(k\right) - f_{\max}\left(0\right)$. For all~$k_{1}, k_{2} \geq 0$, we have (with the corresponding equalities for~$g$ and~$f$ listed as a comparison) 
    \begin{align*}
        g\left(1,k_{1}+k_{2}\right) & =g\left(1,k_{1}\right)+g\left(1,k_{2}\right), \\
        g_{\max}\left(k_{1}+k_{2}\right) & <g_{\max}\left(k_{1}\right)+g_{\max}\left(k_{2}\right),
    \end{align*}
    and alternatively 
    \begin{align}
        f\left(1,k_{1}+k_{2}\right) & =f\left(1,k_{1}\right)+k_{2}, \nonumber \\
        f_{\max}\left(k_{1}+k_{2}\right) & <f_{\max}\left(k_{1}\right)+k_{2}. \label{eq: subadditivity of f}
    \end{align}
    If we show that \textit{every}~$X \in \mathscr{X}'$ has at least~$4 + k$ generalized boundary points for some~$k > 0$, then~$\widehat{\sigma}_{2} \geq \left\lceil f_{\max}\left(4 + k\right) \right\rceil - 1$ by Lemma~\ref{lem: parametric lower bound}. However, the subadditivity of~\eqref{eq: subadditivity of f} implies that adding the~$k$ additional boundary points will improve the lower bound for~$\widehat{\sigma}_{2}$ by \textit{less than} or equal to~$k$. The reason is the overlapping disks when~$\rho > 1$. When we performed our calculations with~$\mathscr{A}_{2}^{\rho}$ in the proof of Lemma~\ref{lem: parametric lower bound}, we had to exclude the (generalized) boundary points from~$R_{2}^{\rho}$, so they could no longer be added back ``for free.'' 
\end{remark}


\subsection{A redundant disk}\label{subsec: redundant disk}

Suppose that~$X \in \mathscr{X}$ has a triangular or quadrilateral convex hull,~$\mathbf{v}^{1} \in X$ is a boundary point that is covered by two disks of~$\mathscr{A}_{2}^{\rho}$, and~$\mathscr{A}_{2}^{\rho}$ is an exact cover of~$X \left\backslash\, \left\{\mathbf{v}^{1}\right\}\right.$. Under certain conditions, we can remove one of the disks~$D$ that covers~$\mathbf{v}^{1}$ without breaking the exact cover. In other words, although~$\mathscr{A}_{2}^{\rho}$ is not an exact cover of~$X$, we show that~$\mathscr{A}_{2}^{\rho} \left\backslash\, \left\{D\right\} \right.$ is an exact cover of~$X$. Hence we may call~$D$ a ``redundant disk.'' This method offers a slight benefit: 

\begin{definition}
    Define the following subsets of~$\mathbb{R}^{2}$ as follows (see Figure~\ref{fig: redundant disk 1}): 
    \begin{enumerate}
        \item Let~$R_{2, 1}^{\rho}$ be the set of points in~$\mathbb{R}^{2}$ which are contained in the intersection of two disks centered at different~$x$-coordinates. 
        \item Let~$R_{2, 2}^{\rho}$ be the set of points in~$\mathbb{R}^{2}$ which are contained in the intersection of two disks centered at the same~$x$-coordinate. 
        \item For any distinct~$\mathbf{y}, \mathbf{z} \in A_{2}$, let~$R^{\rho}\left(\mathbf{y}, \mathbf{z}\right) \coloneqq \rho D_{\mathbf{y}} \cap \rho D_{\mathbf{z}}$. 
    \end{enumerate}
\end{definition}

\begin{figure}[tb]
    \centering
    \begin{subfigure}[b]{0.49\textwidth}
        \centering
        \includegraphics[width=2.3in]{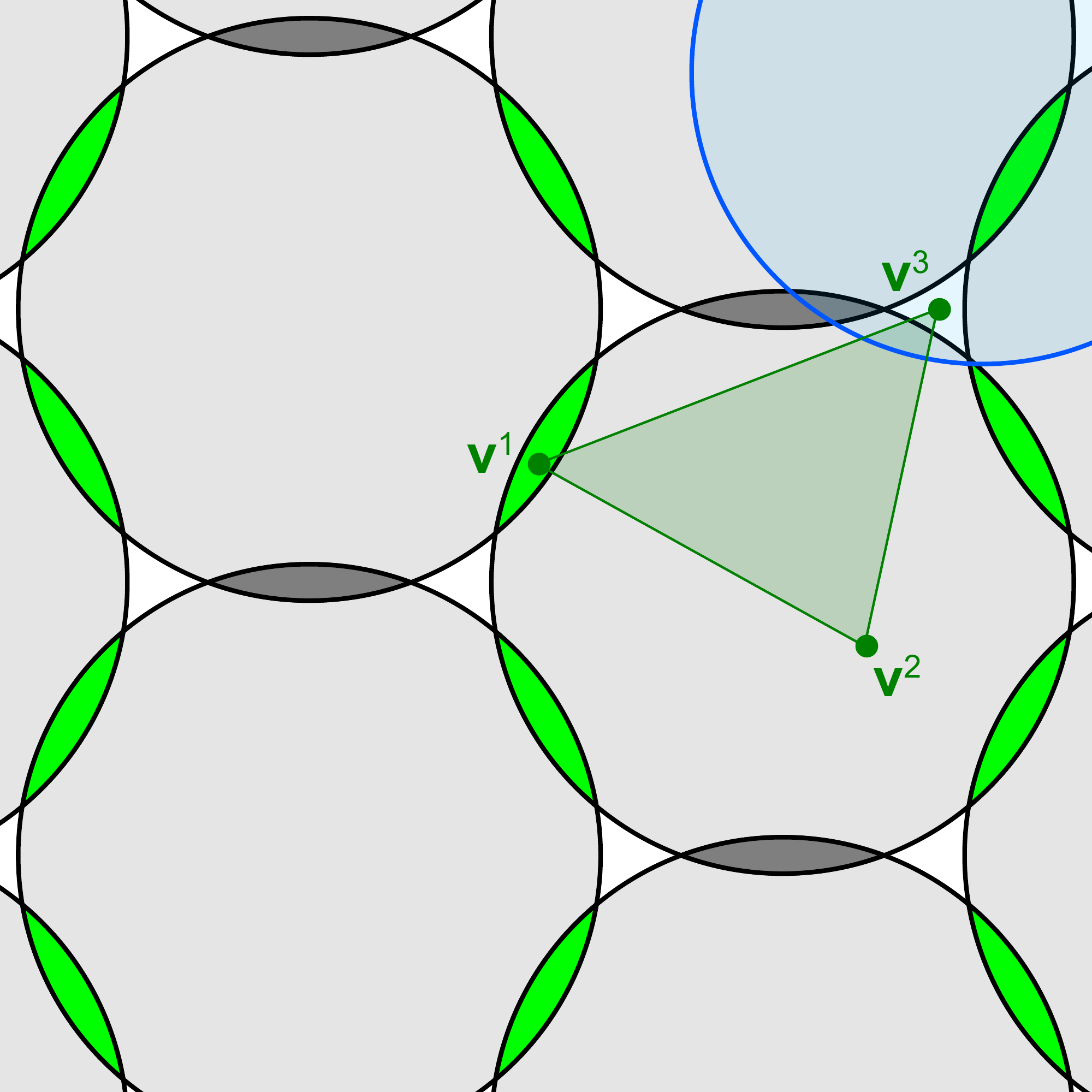}
        \caption{}
        \label{fig: redundant disk 1}
    \end{subfigure}
    \hfill
    \begin{subfigure}[b]{0.49\textwidth}
        \centering
        \includegraphics[width=2.3in]{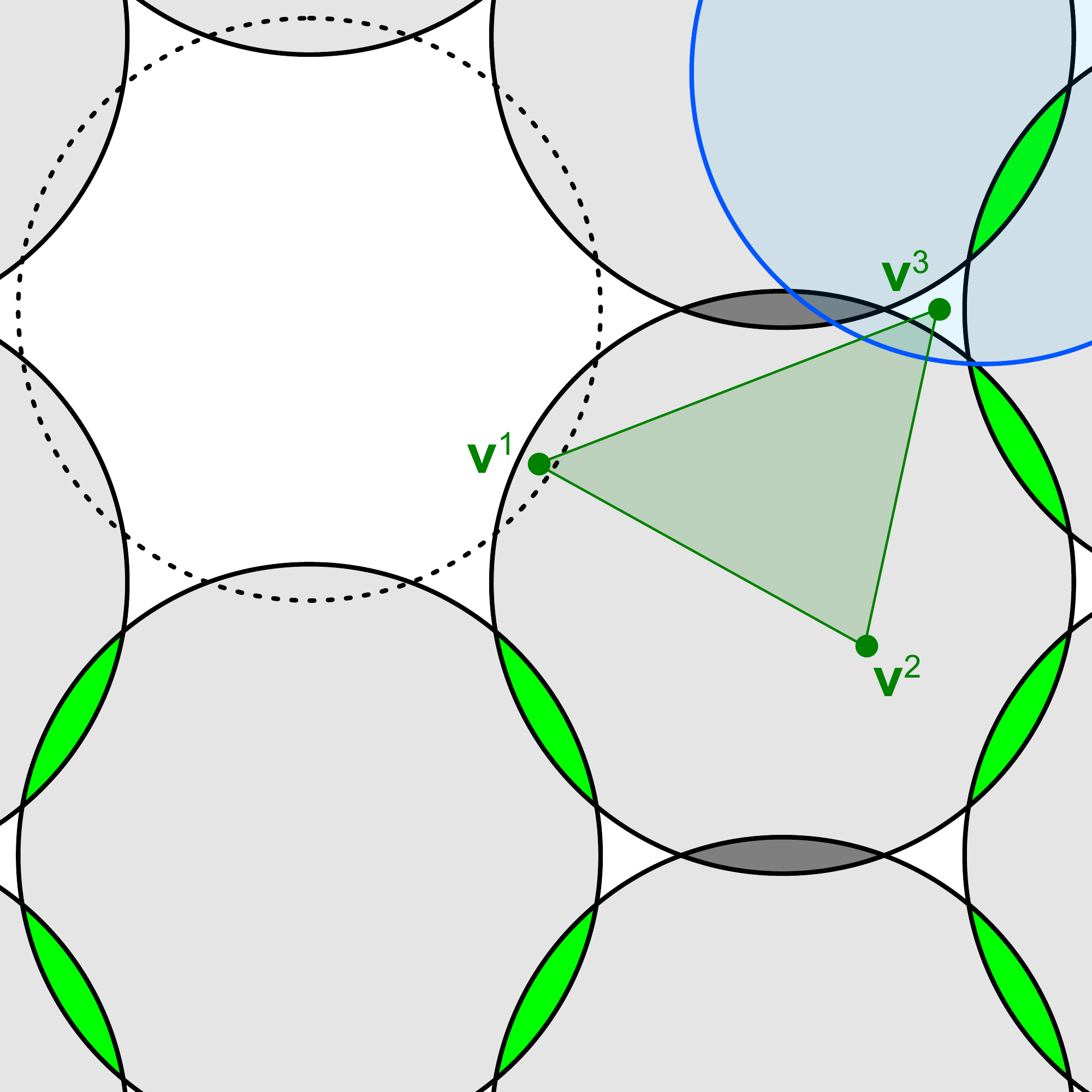}
        \caption{}
        \label{fig: redundant disk 2}
    \end{subfigure}
    \caption{A demonstration of the redundant disk method (Proposition~\ref{prop: a diagonal lens gives a redundant disk}). The collection~$\mathscr{A}_{2}^{\rho}$ (light grey disks) for~$\rho = \frac{8}{7} \approx 1.067$; the subsets~$R_{2, 1}^{\rho}$ and~$R_{2, 2}^{\rho}$ are light green and dark grey respectively. The set~$T'$ is indicated by the dark green triangle. The blue disk is an extra disk necessary for the exact covering of~$X$. \\
    \textit{Left:} The set~$R_{2, 1}^{\rho}$ contains a vertex with small angle less than~$\frac{\pi}{2}$. \\
    \textit{Right:} A disk can be removed from~$\mathscr{A}_{2}^{\rho}$ without breaking the exact cover (see Proposition~\ref{prop: a diagonal lens gives a redundant disk}).}
    \label{fig: redundant disk 1 2}
\end{figure}

Note that~$R_{2}^{\rho} = \bigcup_{\mathbf{y} \neq \mathbf{z}} R^{\rho}\left(\mathbf{y}, \mathbf{z}\right)$ and each point of~$R_{2}^{\rho}$ is contained in either~$R_{2, 1}^{\rho}$ or~$R_{2, 2}^{\rho}$.

Suppose that~$X$ is exactly covered by~$\mathscr{A}_{2}^{\rho}$ (or any covering of radius-$\rho$ disks that contains~$\mathscr{A}_{2}^{\rho}$). Without loss of generality, assume that~$\mathbf{v}^{1} \in R^{\rho}\left(\mathbf{y}, \mathbf{z}\right) \subset R_{2, 1}^{\rho}$,~$\mathbf{y}$ is to the left of~$\mathbf{z}$, and~$X \left\backslash\, \left\{\mathbf{v}^{1}\right\} \right.$ is to the right of~$\mathbf{y}$, as shown in Figure~\ref{fig: redundant disk 1}. If~$\conv X$ is ``pointy'' enough at~$\mathbf{v}^{1}$, then the ``roundness'' of the disk~$D_{\mathbf{z}}$ ensures that~$D_{\mathbf{z}}$ covers all nearby areas of~$\conv X$ without any help from additional disks. In particular,~$D_{\mathbf{y}}$ can be removed from the packing~$\mathscr{A}_{2}^{\rho}$ without breaking the exact cover, even though~$D_{\mathbf{y}} \cap \conv X \neq \emptyset$ (Figure~\ref{fig: redundant disk 2}). Then the subset~$R_{2, 1}^{\rho}$ can be ignored and treated as part of~$R_{1}^{\rho}$ from the perspective of~$\mathbf{v}^{1}$, resulting in a slightly better (or equal) lower bound for~$\widehat{\sigma}_{2}$ (Lemma~\ref{lem: f_r}). 

We carefully analyze the geometry of~$R_{2, 1}^{\rho}$ and~$R_{2, 2}^{\rho}$ to formalize the above intuition. 

\begin{remark}
    Figures~\ref{fig: redundant disk 3} and~\ref{fig: redundant disk 4} show that unlike~$R_{2, 1}^{\rho}$, the set~$R_{2, 2}^{\rho}$ lacks redundancy. If~$\mathbf{v}^{1} \in R_{2, 2}^{\rho}$ then in general, no disk that covers~$X$ can be removed without breaking the exact cover. 

    \begin{figure}[tb]
        \centering
        \begin{subfigure}[b]{0.49\textwidth}
            \centering
            \includegraphics[width=2.3in]{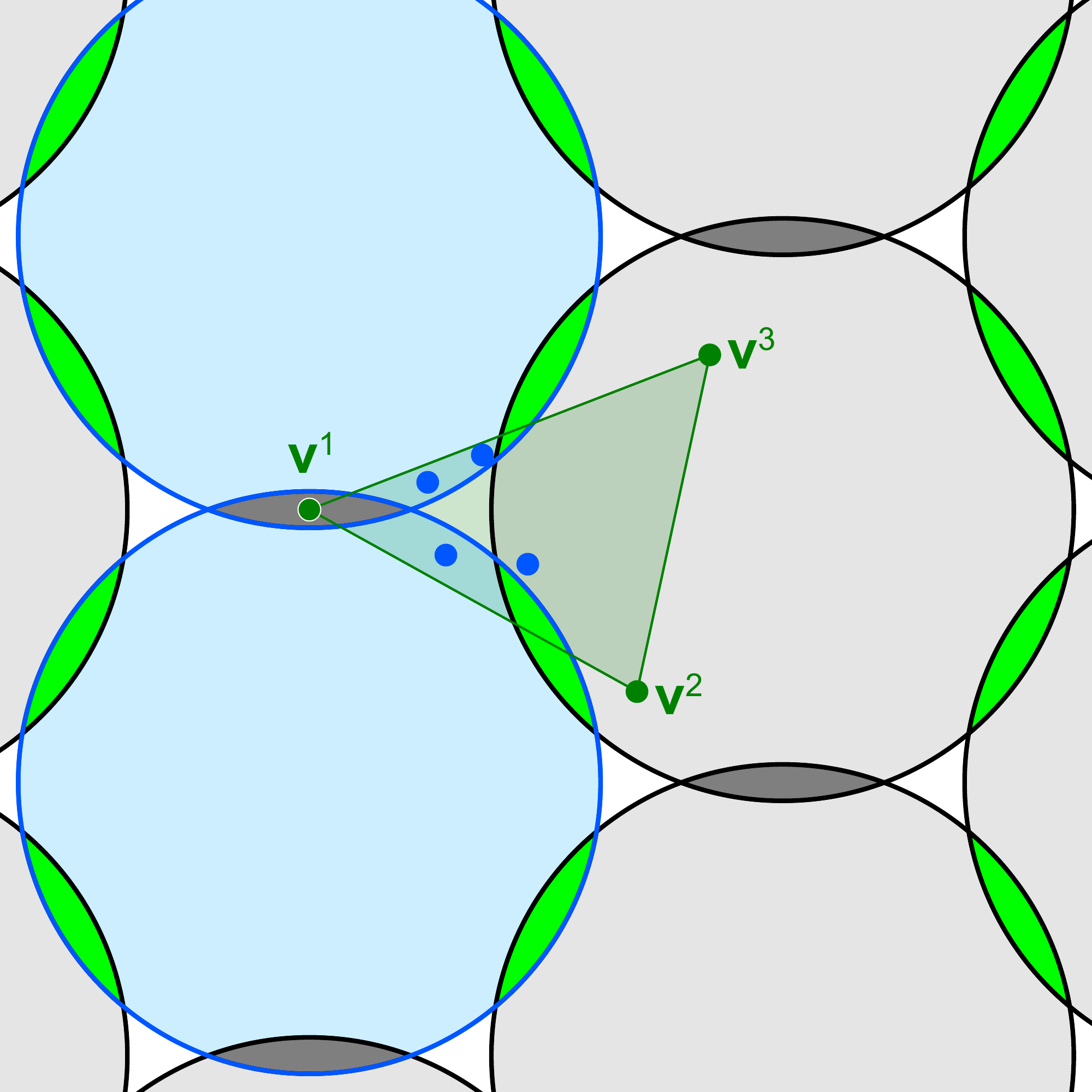}
            \caption{}
            \label{fig: redundant disk 3}
        \end{subfigure}
        \hfill
        \begin{subfigure}[b]{0.49\textwidth}
            \centering
            \includegraphics[width=2.3in]{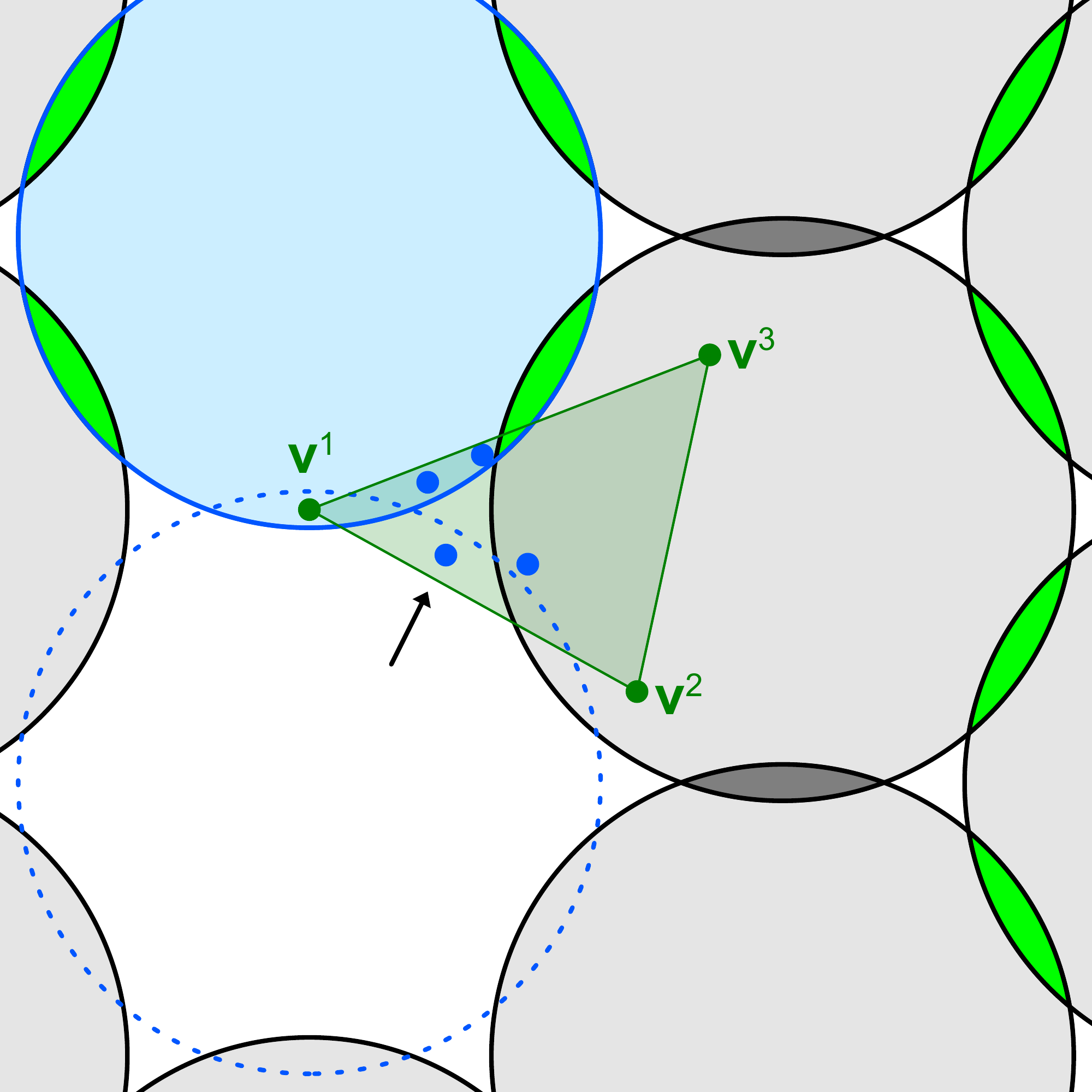}
            \caption{}
            \label{fig: redundant disk 4}
        \end{subfigure}
        \caption{If~$\mathbf{v}^{1} \in R_{2, 2}^{\rho}$, then a disk cannot necessarily be removed from~$\mathscr{A}_{2}^{\rho}$ without breaking the exact covering, as shown by the point in Figure~\ref{fig: redundant disk 4} marked by an arrow.}
        \label{fig: redundant disk 3 4}
    \end{figure}

\end{remark}

\begin{proposition}\label{prop: diagonal lenses are steep}
    Let~$\rho \in \left[1, \sqrt{6} - \sqrt{2} \approx 1.0353\right]$. Let~$\mathbf{y}, \mathbf{z} \in A_{2}$ with~$y_{1} < z_{1}$, and~$\mathbf{w} \in \bd\left(R^{\rho}\left(\mathbf{y}, \mathbf{z}\right)\right)$.
    The slope of the tangent line~$L^{\rho}\left(\mathbf{w}\right)$ to~$R^{\rho}\left(\mathbf{y}, \mathbf{z}\right)$ at~$\mathbf{w}$ has absolute value greater than or equal to~$1$. 
\end{proposition}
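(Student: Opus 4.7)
The plan is to reduce the problem to a handful of cases using the size of $\rho$ and the symmetries of $A_2$, and then to track a single angle. The key equivalence is geometric: at a smooth boundary point $\mathbf{w}$ of a disk with center $\mathbf{c}$, the tangent line is perpendicular to the radial vector $\mathbf{w}-\mathbf{c}$, so its slope has absolute value at least $1$ if and only if $\mathbf{w}-\mathbf{c}$ makes an angle in $[-\pi/4,\pi/4]$ modulo $\pi$ with the horizontal.

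Since $2\rho \leq 2(\sqrt{6}-\sqrt{2}) < 2\sqrt{3}$, and the second-shortest nonzero vector of $A_2$ has length $2\sqrt{3}$, the disks $\rho D_{\mathbf{y}}$ and $\rho D_{\mathbf{z}}$ can overlap only when $\mathbf{z}-\mathbf{y}$ is one of the six nearest-neighbor vectors. Combined with $y_1 < z_1$, only $\mathbf{z}-\mathbf{y} \in \{(\sqrt{3},1),(\sqrt{3},-1)\}$ remain. These two cases are interchanged by reflection in the horizontal axis, which negates slopes, so it suffices to treat $\mathbf{y}=\mathbf{0}$, $\mathbf{z}=(\sqrt{3},1)$.

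For this configuration, the lens $R^{\rho}(\mathbf{y},\mathbf{z})$ has two circular-arc boundary pieces meeting at the two corners
\[
P_{1,2} \;=\; \left(\tfrac{\sqrt{3}}{2},\tfrac{1}{2}\right) \pm \tfrac{\sqrt{\rho^{2}-1}}{2}\,(-1,\sqrt{3}).
\]
I would parameterize the arc on $\partial(\rho D_{\mathbf{y}})$ as $(\rho\cos\theta,\rho\sin\theta)$, so its tangent slope is $-\cot\theta$, and compute the extremes $\theta(P_{1}),\theta(P_{2})$. A direct computation shows that $P_1 = (\sqrt{3}-1,\sqrt{3}-1)$ (so $\theta(P_1)=\pi/4$) precisely when $\rho^{2}-2\sqrt{6}\rho+4=0$, that is, $\rho = \sqrt{6}-\sqrt{2}$; at that same value, $P_2=(1,2-\sqrt{3})$ and $\theta(P_2)=\pi/12$. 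One then checks monotonicity in $\rho$: as $\rho$ grows from $1$ to $\sqrt{6}-\sqrt{2}$, $\theta(P_1)$ increases from $\pi/6$ to $\pi/4$ and $\theta(P_2)$ decreases from $\pi/6$ to $\pi/12$, so the arc angle always lies in $[\pi/12,\pi/4]\subseteq [-\pi/4,\pi/4]$, giving $|\cot\theta|\geq 1$ as required.

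The arc on $\partial(\rho D_{\mathbf{z}})$ is handled by a symmetry argument: point reflection through the midpoint $\tfrac{1}{2}(\mathbf{y}+\mathbf{z})$ sends $\rho D_{\mathbf{y}}$ to $\rho D_{\mathbf{z}}$ and hence maps the lens to itself while swapping the two arcs; since point reflection preserves the slope of every line, the tangent slope bound transfers directly from the first arc. The main obstacle is the explicit algebraic verification that $\theta(P_1)\leq \pi/4$ on the whole interval $[1,\sqrt{6}-\sqrt{2}]$ and identifying the extremal $\rho$; but once the corner coordinates are written down, this reduces to solving a quadratic in $\rho$.
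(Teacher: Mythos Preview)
Your proposal is correct and follows essentially the same route as the paper's proof: reduce by the symmetries of $A_2$ to a single nearest-neighbour pair, compute the corners of the lens explicitly, and verify that the radial direction at the corners lies within $45^\circ$ of horizontal (with the extremal case at $\rho=\sqrt{6}-\sqrt{2}$). If anything, your version is more explicit than the paper's, since you compute both corner angles $\theta(P_1),\theta(P_2)$ and pin down the full arc range $[\pi/12,\pi/4]$, whereas the paper only computes one corner's tangent slope and leaves the reduction to all other boundary points to a one-line ``symmetry of $A_2$ and the circle'' remark.
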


\begin{proof}
    See Appendix~\ref{subsec: proofs, redundant disk}. 
\end{proof}

\begin{proposition}\label{prop: a diagonal lens gives a redundant disk}
    Let~$X \in \mathscr{X}$ and~$\rho \in \left[1, \sqrt{6} - \sqrt{2}\right]$. Suppose that~$X$ meets the following conditions (Figure~\ref{fig: redundant disk 1}): 
    \begin{enumerate}
        \item There exist~$\mathbf{v}^{2}, \mathbf{v}^{1}, \mathbf{v}^{3} \in X$ such that the interior angle~$\angle \mathbf{v}^{2} \mathbf{v}^{1} \mathbf{v}^{3} \leq \frac{\pi}{2}$. 
        \item The slopes of~$\overline{\mathbf{v}^{1} \mathbf{v}^{2}}$ and~$\overline{\mathbf{v}^{1} \mathbf{v}^{3}}$ have equal magnitude and opposite signs. 
        \item Every point of~$X \left\backslash\, \left\{\mathbf{v}^{1}\right\} \right.$ is to the right of~$\mathbf{v}^{1}$. 
        \item The small angle point~$\mathbf{v}^{1}$ is contained in some~$R^{\rho}\left(\mathbf{y}, \mathbf{z}\right) \subset R_{2, 1}^{\rho}$,~$y_{1} < z_{1}$. 
    \end{enumerate}
    If~$X$ is exactly covered by~$\mathscr{A}_{2}^{\rho}$ (or any covering of radius-$\rho$ disks that contains~$\mathscr{A}_{2}^{\rho}$), then~$X$ is \textit{also} exactly covered by~$\left. \mathscr{A}_{2}^{\rho} \,\right\backslash \left\{D_{\mathbf{y}}\right\}$ (Figure~\ref{fig: redundant disk 2}). 
\end{proposition}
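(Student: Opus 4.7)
The plan is to show that the disk $D_\mathbf{y}$ is \emph{redundant}: no point of $X\setminus\{\mathbf{v}^1\}$ lies in $D_\mathbf{y}$. Combined with the fact that $\rho\le\sqrt 6-\sqrt 2<\tfrac{2}{\sqrt 3}$ forces $R_k^\rho=\emptyset$ for all $k\ge 3$, so that $\mathbf{v}^1\in R^\rho(\mathbf{y},\mathbf{z})$ is covered by \emph{exactly} the two disks $D_\mathbf{y}$ and $D_\mathbf{z}$ of $\mathscr{A}_2^\rho$, this gives the conclusion: deleting $D_\mathbf{y}$ drops the coverage count of $\mathbf{v}^1$ from $2$ to $1$ and leaves every other point of $X$ untouched, so $\mathscr{A}_2^\rho\setminus\{D_\mathbf{y}\}$ exactly covers $X$.

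First I would translate coordinates so that $\mathbf{v}^1=\mathbf{0}$, and show that conditions~1--3 place every point of $X\setminus\{\mathbf{v}^1\}$ inside the closed cone
\[
    \mathscr{C}:=\{(x_1,x_2)\in\mathbb{R}^2:x_1\ge 0,\ |x_2|\le x_1\}.
\]
Indeed, condition~2 forces the two edges of $\conv X$ at $\mathbf{v}^1$ to have slopes $\pm m$, condition~1 gives $2\arctan m\le\pi/2$ so $m\le 1$, and condition~3 places the remaining vertices to the right of the $x_2$-axis. By condition~4 and the horizontal reflection symmetry inherent in condition~2, I may assume $\mathbf{z}-\mathbf{y}=(\sqrt 3,1)$ (the case $(\sqrt 3,-1)$ follows by reflecting across the $x_1$-axis).

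The core of the proof is the geometric inclusion $D_{\mathbf{y},\rho}\cap\mathscr{C}\subseteq D_{\mathbf{z},\rho}$, equivalently $(D_{\mathbf{y},\rho}\setminus D_{\mathbf{z},\rho})\cap\mathscr{C}=\emptyset$. The set $D_{\mathbf{y},\rho}\setminus D_{\mathbf{z},\rho}$ is open, connected, and contains $\mathbf{y}$, which satisfies $y_1<0$ (from $\|\mathbf{y}\|<\rho\le\sqrt 6-\sqrt 2<\sqrt 3$ together with $\mathbf{z}-\mathbf{y}=(\sqrt 3,1)$), hence has a point outside $\mathscr{C}$. Its topological boundary is the disjoint union of an outer arc on $\partial D_{\mathbf{y},\rho}$ (outside $D_{\mathbf{z},\rho}$) and a lens-side arc on $\partial D_{\mathbf{z},\rho}$ (inside $D_{\mathbf{y},\rho}$), the latter being one of the two boundary arcs of $R^\rho(\mathbf{y},\mathbf{z})$. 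By connectedness it suffices to show that neither boundary arc enters $\mathrm{int}(\mathscr{C})$. For the lens-side arc, Proposition~\ref{prop: diagonal lenses are steep} guarantees that every tangent has absolute slope at least $1$, matching or exceeding the slopes $\pm 1$ of the cone walls; together with a short direct computation locating the two lens tips (at distance $\sqrt{\rho^2-1}$ from the midpoint of $\overline{\mathbf{y}\mathbf{z}}$ along the perpendicular direction $(1,-\sqrt 3)/2$) outside $\mathrm{int}(\mathscr{C})$ under the constraints $\|\mathbf{y}\|,\|\mathbf{z}\|<\rho\le\sqrt 6-\sqrt 2$, this ``steepness'' prevents the arc from dipping into the cone. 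The outer arc is handled analogously, using that $\mathbf{y}$ lies strictly in the open lower-left quadrant of the translated picture. Once the inclusion is established, any $\mathbf{x}\in(X\setminus\{\mathbf{v}^1\})\cap D_{\mathbf{y},\rho}$ would force $\mathbf{x}\in D_{\mathbf{y},\rho}\cap D_{\mathbf{z},\rho}=R^\rho(\mathbf{y},\mathbf{z})\subseteq R_{2,1}^\rho$, contradicting that $\mathbf{x}$ is covered exactly once by $\mathscr{A}_2^\rho$.

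The main obstacle is the geometric verification that the lens-side arc avoids $\mathrm{int}(\mathscr{C})$. The steepness bound from Proposition~\ref{prop: diagonal lenses are steep} matches the cone's wall slope $\pm 1$ exactly, and the range $\rho\in[1,\sqrt 6-\sqrt 2]$ looks chosen precisely so that the lens tips clear the cone; turning this qualitative matching into a rigorous argument will require a careful case analysis on the admissible positions of $\mathbf{y}$ and $\mathbf{z}$ constrained by $\mathbf{v}^1\in R^\rho(\mathbf{y},\mathbf{z})$ and $\mathbf{z}-\mathbf{y}=(\sqrt 3,1)$.
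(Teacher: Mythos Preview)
Your approach is essentially the same as the paper's: both reduce the claim to the geometric inclusion ``the narrow wedge of $\conv X$ at $\mathbf{v}^1$, intersected with $D_{\mathbf{y}}$, lies inside $D_{\mathbf{z}}$'', and both invoke Proposition~\ref{prop: diagonal lenses are steep} (tangent slopes on the lens boundary have absolute value $\ge 1$, matching the cone's wall slopes $\pm 1$) as the key input. The paper's write-up is just as terse as yours; it phrases the inclusion as $(\conv X)\cap S\subseteq D_{\mathbf{z}}$, where $S$ is the half-space to the left of the rightmost lens tip $\mathbf{w}^\rho$, and appeals to the Mean Value Inequality together with the symmetry of the circle rather than your connectedness argument on the crescent $D_{\mathbf{y}}\setminus D_{\mathbf{z}}$.

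One practical remark: the paper's half-space formulation sidesteps your ``outer arc'' step entirely. Because $\mathbf{v}^1\in R^\rho(\mathbf{y},\mathbf{z})\subseteq S$, the portion of $\conv X$ inside $S$ is bounded only by the two wedge edges and the vertical line $\{w_1=w_1^\rho\}$; comparing those slopes to the lens-arc slopes (via Proposition~\ref{prop: diagonal lenses are steep}) gives the inclusion directly, with no need to analyse $\partial D_{\mathbf{y}}$ at all. This is a cleaner packaging of the same idea and would let you drop the case analysis you flag as the main obstacle. Incidentally, your claim that $\mathbf{y}$ lies in the open lower-left quadrant is correct, but not for the reason you state: it follows because $\mathbf{0}\in R^\rho(\mathbf{y},\mathbf{z})$ forces $\mathbf{y}$ into the small lens $D_{\mathbf{0},\rho}\cap D_{-(\sqrt{3},1),\rho}$, which for $\rho\le\sqrt6-\sqrt2$ is a tiny neighbourhood of $(-\tfrac{\sqrt3}{2},-\tfrac12)$.
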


\begin{proof}
    See Appendix~\ref{subsec: proofs, redundant disk}. 
\end{proof}

\begin{lemma}\label{lem: f_r}
    Let~$X \in \mathscr{X}'$ with~$\lvert X \rvert \leq 17$. If~$\conv X$ is a triangle or a quadrilateral, then~$X$ can be exactly covered. 
\end{lemma}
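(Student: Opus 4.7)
The plan is to combine the Generalized Extension Argument, the parametric Inaba method of Lemma~\ref{lem: parametric lower bound}, and the redundant disk Proposition~\ref{prop: a diagonal lens gives a redundant disk} into a single refined bound. The core observation is that for both kinds of convex hulls under consideration, $X$ possesses at least four generalized boundary points \emph{and} a distinguished vertex $\mathbf{v}^{1}$ of $\conv X$ with interior angle at most $\pi/2$. If $\conv X$ is a quadrilateral, its four vertices are boundary points (hence generalized boundary points), and since the interior angles sum to $2\pi$ at least one is at most $\pi/2$. If $\conv X$ is a triangle, the smallest interior angle is at most $\pi/3 \le \pi/2$, and Lemmas~\ref{lem: large triangles have four generalized boundary points}--\ref{lem: small triangles have four generalized boundary points} either furnish the required four generalized boundary points or already yield an exact cover by a bounded number of disks.

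Having fixed $\mathbf{v}^{1}$ together with three further generalized boundary points $\mathbf{b}^{2}, \mathbf{b}^{3}, \mathbf{b}^{4}$, I would rotate $X$ so that $\mathbf{v}^{1}$ is leftmost and the two edges of $\conv X$ incident to $\mathbf{v}^{1}$ have slopes of equal magnitude and opposite signs; this is possible precisely because the angle at $\mathbf{v}^{1}$ is at most $\pi/2$, and it places us into the setting of Proposition~\ref{prop: a diagonal lens gives a redundant disk}. I would then rerun the probabilistic argument of Lemma~\ref{lem: parametric lower bound} with a random translate of $\mathscr{A}_{2}^{\rho}$ for some $\rho \in [1, \sqrt{6}-\sqrt{2}]$, but with a relaxed accounting for $\mathbf{v}^{1}$: this vertex only has to avoid $R_{2,2}^{\rho}$, because landing in $R_{0}^{\rho}$ is handled by adding an extra disk (as in the Generalized Extension Argument), landing in $R_{1}^{\rho}$ is already fine, and landing in $R_{2,1}^{\rho}$ is handled by invoking Proposition~\ref{prop: a diagonal lens gives a redundant disk} to delete one of the two covering disks from $\mathscr{A}_{2}^{\rho}$. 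The three points $\mathbf{b}^{2}, \mathbf{b}^{3}, \mathbf{b}^{4}$ must avoid $R_{2}^{\rho}$ and the remaining $|X|-4$ points must avoid $R_{0}^{\rho} \cup R_{2}^{\rho}$ exactly as in the proof of Lemma~\ref{lem: parametric lower bound}.

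The resulting bound then takes the form $|X| \le \lceil f_{r}(\rho, 4)\rceil - 1$, where
\[
f_{r}(\rho, 4) \;=\; \frac{\area(H) - \area(R_{2,2}^{\rho} \cap H) - 3\,\area(R_{2}^{\rho} \cap H)}{\area(R_{0}^{\rho} \cap H) + \area(R_{2}^{\rho} \cap H)} + 4 \;=\; f(\rho, 4) + \frac{\area(R_{2,1}^{\rho} \cap H)}{\area(R_{0}^{\rho} \cap H) + \area(R_{2}^{\rho} \cap H)}.
\]
Since all six nearest-neighbor pairs in $A_{2}$ are at distance $2$, with four diagonal neighbors of the origin contributing to $R_{2,1}^{\rho}$ and two vertical neighbors contributing to $R_{2,2}^{\rho}$, one obtains the identity $\area(R_{2,1}^{\rho} \cap H) = 2\,\area(R_{2,2}^{\rho} \cap H)$. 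A direct calculation, using the elementary formulas for $\area(R_{0}^{\rho} \cap H)$ and $\area(R_{2}^{\rho} \cap H)$ established in the alternate proof of $\widehat{\sigma}_{2} \ge 16$, then shows that $f_{r}(\rho, 4) > 17$ for some $\rho$ near $\rho_{\max}(4) = 1.026\ldots$, completing the proof.

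The main obstacle is that the margin is tight: $f(\rho_{\max}(4), 4) = 16.948\ldots$, so the saving $\area(R_{2,1}^{\rho} \cap H)/(\area(R_{0}^{\rho} \cap H) + \area(R_{2}^{\rho} \cap H))$ from the redundant disk trick must be carefully shown to exceed about $0.052$. Using the $2{:}1$ ratio above, this reduces to a one-variable calculation in $\rho$ that can be checked numerically (or by re-optimizing $\rho$ for $f_{r}$ itself, rather than reusing $\rho_{\max}(4)$). A minor bookkeeping point is that the normalization imposed in the second paragraph is a rotation of $X$ before the random translate of $\mathscr{A}_{2}^{\rho}$ is drawn, so the two modifications are compatible.
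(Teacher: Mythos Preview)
Your proposal is correct and follows essentially the same route as the paper: identify a vertex $\mathbf{v}^1$ with interior angle at most $\pi/2$, rotate into the configuration required by Proposition~\ref{prop: a diagonal lens gives a redundant disk}, secure four generalized boundary points (via Lemmas~\ref{lem: large triangles have four generalized boundary points}--\ref{lem: small triangles have four generalized boundary points} in the triangle case), and then rerun the parametric Inaba argument with the relaxed constraint that $\mathbf{v}^1$ only has to avoid $R_{2,2}^{\rho}$. The paper carries out exactly this accounting, uses the same $2{:}1$ ratio $\area(R_{2,1}^{\rho}\cap H)=2\,\area(R_{2,2}^{\rho}\cap H)$, and, re-optimizing in $\rho$ as you suggest, finds $\max_\rho f_{\mathrm r}(\rho)=17.082\ldots$ at $\rho\approx 1.027$, which clears the bar.
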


\begin{proof}
    Since~$\conv X$ has three or four sides, at least one vertex (after relabeling) has interior angle~$\angle \mathbf{v}^{2} \mathbf{v}^{1} \mathbf{v}^{3} \leq \frac{\pi}{2}$. Without loss of generality, we may flip and rotate~$X$ such that the hypotheses 1--3 of Proposition~\ref{prop: a diagonal lens gives a redundant disk} are satisfied. 

    Suppose that~$\mathbf{v}^{1} \in R^{\rho}\left(\mathbf{y}, \mathbf{z}\right) \subset R_{2, 1}^{\rho}$ (hypothesis 4). This point is covered by two disks~$B_{\mathbf{y}}$ and~$B_{\mathbf{z}}$ of~$\mathscr{A}_{2}^{\rho}$. However, the conclusion of Proposition~\ref{prop: a diagonal lens gives a redundant disk} states that~$X$ is exactly covered by the smaller covering~$\left. \mathscr{A}_{2}^{\rho} \,\right\backslash \left\{D_{\mathbf{y}}\right\}$. 

    Now suppose that~$\mathbf{v}^{1} \notin R_{2, 1}^{\rho}$. This case reduces to the case of a generalized boundary point in the proof of Lemma~\ref{lem: parametric lower bound}. 
    
    Let~$H = \VR_{A_{2}}\left(\mathbf{0}\right)$. The following exclusions apply (cf. the proof of Lemma~\ref{lem: parametric lower bound}): 
    \begin{enumerate}
        \item The small angle point~$\mathbf{v}^{1} \in X$ cannot be in~$R_{2, 2}^{\rho}$, so is in~$R_{0}^{\rho}$,~$R_{1}^{\rho}$, or~$R_{2, 1}^{\rho}$. 
        \item Three (or more) additional generalized boundary points~$\mathbf{v}^{2}, \mathbf{v}^{3}, \mathbf{b}^{4}$ of~$X$ exist by Lemmas~\ref{lem: large triangles have four generalized boundary points}~and~\ref{lem: medium triangles have four generalized boundary points} (including the two other vertices mentioned above). Each of these points cannot be in~$R_{2}^{\rho} = R_{2, 1}^{\rho} \cup R_{2, 2}^{\rho}$, so is in~$R_{0}^{\rho}$ or~$R_{1}^{\rho}$. 
        \item Each of the~$n - 4$ remaining points of~$X$ cannot be in~$R_{0}^{\rho}$ or~$R_{2}^{\rho}$, so is in~$R_{1}^{\rho}$. 
    \end{enumerate}
    The sets of forbidden translation vectors are~$-\mathbf{v}^{1} + R_{2, 2}^{\rho}$;~$-\mathbf{v}^{2} + R_{2}^{\rho}$,~$-\mathbf{v}^{3} + R_{2}^{\rho}$, and~$-\mathbf{b}^{4} + R_{2}^{\rho}$; and~$-\mathbf{x} + \left(R_{0}^{\rho} \cup R_{2}^{\rho}\right)$ for~$\mathbf{x} \in X \left\backslash\, \left\{\mathbf{v}^{1}, \mathbf{v}^{2}, \mathbf{v}^{3}, \mathbf{b}^{4}\right\} \right.$ respectively. 
    
    So if~$n$ satisfies 
    \[
        \area\left(H\right) > \area\left(R_{2, 2}^{\rho} \cap H\right) + 3 \area\left(R_{2}^{\rho} \cap H\right) + \left(n - 4\right) \left(\area\left(R_{0}^{\rho} \cap H\right) + \area\left(R_{2}^{\rho} \cap H\right)\right),
    \]
    then~$\mathscr{A}_{2}^{\rho}$ can be translated to meet conditions 1, 2, and 3, and so there is an exact cover~$\mathscr{D}$ of~$X$. Accordingly, for~$\rho \in \left[1, \sqrt{6} - \sqrt{2}\right]$ define 
    \[
        f_{\text{r}}\left(\rho\right) \coloneqq \frac{\area\left(H\right) - 3 \area\left(R_{2}^{\rho} \cap H\right) - \frac{1}{3} \area\left(R_{2, 2}^{\rho} \cap H\right)}{\area\left(R_{0}^{\rho} \cap H\right) + \area\left(R_{2}^{\rho} \cap H\right)} + 4.
    \]
    If~$\left\vert X \right\vert \leq \left\lceil f_{\text{r}} \left(\rho\right) \right\rceil - 1$ points then~$X$ can be exactly covered by~$\mathscr{D}$. This cover contains all disks of~$\mathscr{A}_{2}^{\rho}$ except for possibly one redundant disk. 
    
    For any~$\mathbf{x} \in A_{2}$, the disk~$D_{\mathbf{x}}$ contains four distinct intersections 
    \[
        R^{\rho}\left(\begin{pmatrix}x_{1} \\ x_{2}\end{pmatrix}, \begin{pmatrix}x_{1} \pm \sqrt{3} \\ x_{2} \pm 1\end{pmatrix}\right) \subset R_{1, 2}
    \]
    and two distinct intersections 
    \[
        R^{\rho}\left(\begin{pmatrix}x_{1} \\ x_{2}\end{pmatrix}, \begin{pmatrix}x_{1} \\ x_{2} \pm 2\end{pmatrix}\right) \subset R_{2, 2}^{\rho},
    \]
    all of identical area. Hence~$\area\left(R_{2, 2}^{\rho} \cap H\right) = \frac{1}{3} \area\left(R_{2}^{\rho} \cap H\right)$. Using the area calculations from the proof of Lemma~\ref{lem: parametric lower bound} and inspection, we obtain 
    \[
        \max_{\rho\in\left[1,\frac{2}{\sqrt{3}}\right]}f_{\text{r}}\left(\rho\right)=17.082... \quad \text{at} \quad \rho=1.027... \, \text{.} \qedhere
    \]
\end{proof}

Finally, we prove our lower bound of~$\widehat{\sigma}_{2} \geq 17$. 

\begin{proof}[Proof of Theorem~\ref{thm: lower bound}]
    Let~$X \in \mathscr{X}'$ with~$\lvert X \rvert \leq 17$. 
    \begin{enumerate}
        \item If~$\conv X$ has three or four sides, then~$X$ can be exactly covered by Lemma~\ref{lem: f_r}. 
        \item If~$\conv X$ has five or more sides, then~$X$ has at least five generalized boundary points, so~$X$ can be exactly covered by the inequality~$\widehat{\sigma}_{2}\left(5\right) \geq 17$ from either of the lower bounds~\ref{eq: lower bound: AHIU + Kozma} or~\ref{eq: lower bound: Inaba + Kozma + parametric}. 
    \end{enumerate}
\end{proof}


\section{An upper bound}\label{sec: upper bound}

\begin{definition}\label{def: distance}
    Let~$X \subset \R^2$ be a non-empty set. The \textit{distance of a point~$\mathbf{y} \in \R^2$ to~$X$} is defined by
    \begin{equation}
        \dist\left(\mathbf{y},X\right) \coloneqq \inf \{\norm{\mathbf{y}-\mathbf{x}}\mid \mathbf{x} \in X\}
    \end{equation}
    and the~\textit{$\varepsilon$-extension} of~$X$ is given by
    \begin{equation}
        X_\varepsilon \coloneqq \left\{\mathbf{y} \in \R^2 \mid \dist\left(\mathbf{y},X\right)\leq\varepsilon\right\}.
    \end{equation}
    We say that~$X$ is an~\textit{$\varepsilon$-net} of~$M\subset \R^2$ if~$M \subset X_\varepsilon$.
\end{definition}
	
\begin{definition}\label{def: epsilon-blocker}
    Let~$M \subset \R^2$ and~$\varepsilon > 0$. We say that~$M$ is an~\textit{$\varepsilon$-blocker} if every~$\varepsilon$-net~$X \in \mathscr{X}$ of~$M$ does not have an exact cover.
\end{definition}
	
We recall that the covering number~$N\left(M,\varepsilon\right)$ of a set~$M\subset \R^2$ is the minimal cardinality of an~$\varepsilon$-net of~$M$. The following statement is a direct consequence of Definition~\ref{def: epsilon-blocker}.
	
\begin{proposition}\label{prop:epsilon_blocker_bound}
    Let~$M \subset \R^2$ be an~$\varepsilon$-blocker. Then~$\widehat{\sigma}_{2} < N\left(M,\varepsilon\right)$. ~$\qed$
\end{proposition}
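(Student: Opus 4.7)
The statement is essentially an unpacking of the two definitions involved, and indeed the authors have already marked it with a $\qed$ in the statement, suggesting no separate proof is expected. My plan is therefore to write a very short direct argument chasing the definitions.

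The plan is to start from the covering number: by the definition of $N(M,\varepsilon)$ as a minimal cardinality, there exists an $\varepsilon$-net $X \in \mathscr{X}$ of $M$ with $|X| = N(M,\varepsilon)$. (If no finite $\varepsilon$-net existed, the statement would read $\widehat{\sigma}_2 < \infty$, which is still true, so we may assume $N(M,\varepsilon) < \infty$.) Since $M$ is an $\varepsilon$-blocker, Definition~\ref{def: epsilon-blocker} applied to this particular $X$ tells us that $X$ admits no exact cover by unit disks.

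Next I would invoke the definition of $\widehat{\sigma}_2$ from Definition~\ref{def: disjoint and overlapping disk covering} in contrapositive form: $\widehat{\sigma}_2$ is the largest $n$ such that every $n$-point set in the plane can be exactly covered, so if a finite point set fails to admit an exact cover, its cardinality must be strictly greater than $\widehat{\sigma}_2$. Applying this to our $X$ yields $N(M,\varepsilon) = |X| > \widehat{\sigma}_2$, which is the claim.

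There is no real obstacle here; the whole content is the two-step definition chase (net $\to$ no exact cover via the blocker property, then no exact cover $\to$ too large via the definition of $\widehat{\sigma}_2$). The one tiny thing to be careful about is that the $\varepsilon$-net that witnesses the blocker property must be a finite one (i.e.\ lie in $\mathscr{X}$), but this is built into both the definition of $\varepsilon$-blocker and the definition of $N(M,\varepsilon)$ as a minimal cardinality, so there is nothing further to verify.
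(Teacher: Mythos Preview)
Your proposal is correct and matches the paper's approach exactly: the paper gives no separate proof, marking the statement with a \qed{} and calling it ``a direct consequence of Definition~\ref{def: epsilon-blocker}'', which is precisely the two-step definition chase you wrote out.
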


Our upper bound on~$\widehat{\sigma}_{2}$ follows from the following result, which asserts that every open disk of radius~$R>1$ is an~$\varepsilon$-blocker for a suitably chosen~$\varepsilon>0$. 

\begin{proposition} \label{prop:concrete_no_instances}
    Let~$\varepsilon \in \left(0,7-\sqrt{48} \approx 0.0718\right]$ and
    \begin{equation}
        R \geq \frac{3}{2}\left(1+\varepsilon\right)-\frac{1}{2}\sqrt{1-14\varepsilon+\varepsilon^2}.
    \end{equation} 
    Then~$D_{\mathbf{0},R}$ is an~$\varepsilon$-blocker.
\end{proposition}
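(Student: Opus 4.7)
The plan is to assume an exact cover exists and extract two geometric constraints that are jointly infeasible for the stated $R$. Suppose $X \in \mathscr{X}$ is an $\varepsilon$-net of $D_{\mathbf{0},R}$ and $\mathscr{D}=\{D_{\mathbf{c}_1},\ldots,D_{\mathbf{c}_n}\}$ is an exact cover of $X$. The first constraint, \emph{coverage}, is $D_{\mathbf{0},R}\subset\bigcup_i D_{\mathbf{c}_i,\,1+\varepsilon}$: every $\mathbf{p}\in D_{\mathbf{0},R}$ is within $\varepsilon$ of some $\mathbf{x}\in X$, and $\mathbf{x}$ sits in some unit disk $D_{\mathbf{c}_i}$, so $\|\mathbf{p}-\mathbf{c}_i\|<1+\varepsilon$ by the triangle inequality. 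The second, \emph{almost-disjointness}, is a separation condition: because $D_{\mathbf{c}_i}\cap D_{\mathbf{c}_j}$ contains no point of $X$ while $X$ is an $\varepsilon$-net of $D_{\mathbf{0},R}$, the lens $D_{\mathbf{c}_i}\cap D_{\mathbf{c}_j}$ cannot contain any open ball of radius greater than $\varepsilon$ lying entirely in $D_{\mathbf{0},R}$. Since the largest ball inscribed in the lens has radius $1-\tfrac{1}{2}\|\mathbf{c}_i-\mathbf{c}_j\|$ and is centered at the midpoint $\mathbf{m}_{ij}=\tfrac{1}{2}(\mathbf{c}_i+\mathbf{c}_j)$, this forces $\|\mathbf{c}_i-\mathbf{c}_j\|\geq 2(1-\varepsilon)$ whenever $\mathbf{m}_{ij}$ lies more than $1-\varepsilon$ deep inside $D_{\mathbf{0},R}$.

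Given the lower bound on $R$, one first verifies $R>1+\varepsilon$ (using that $(1+\varepsilon)^2>1-14\varepsilon+\varepsilon^2$ for $\varepsilon>0$), so the coverage constraint immediately excludes $n=1$. For $n=2$, the antipodal points $\pm(0,R)\in\partial D_{\mathbf{0},R}$ cannot both be covered by two disks of radius $1+\varepsilon<R$, so $n\geq 3$. For the main case $n\geq 3$, the strategy is to select the pair of centers $(\mathbf{c}_i,\mathbf{c}_j)$ whose midpoint $\mathbf{m}_{ij}$ lies deepest inside $D_{\mathbf{0},R}$, invoke almost-disjointness to force $\|\mathbf{c}_i-\mathbf{c}_j\|\geq 2(1-\varepsilon)$, and then combine this with coverage at the boundary point of $D_{\mathbf{0},R}$ farthest from both $\mathbf{c}_i$ and $\mathbf{c}_j$---which some third center $\mathbf{c}_k$ must sit within $1+\varepsilon$ of---to derive an inequality that bounds $R$ in terms of $\varepsilon$. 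Squaring and manipulating yields the quadratic $R^2-3(1+\varepsilon)R+2(1+\varepsilon)^2+4\varepsilon=0$, whose smaller root is exactly the stated expression; note that the unperturbed factorization $(R-(1+\varepsilon))(R-2(1+\varepsilon))$ has roots at the two natural thresholds $R=1+\varepsilon$ and $R=2(1+\varepsilon)$ separating the ``one disk suffices'' and ``two tangent $\varepsilon$-fattened unit disks span a diameter'' regimes, and the $+4\varepsilon$ term encodes the slack introduced by overlapping.

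The hardest step will be the extremal reduction: I must argue that among all admissible configurations satisfying both constraints, the worst case---the one permitting the largest $R$---is the concrete symmetric configuration encoded in the quadratic. This is a compactness-plus-monotonicity argument that should show that neither breaking the symmetry of the extremal configuration nor introducing further disks can simultaneously loosen coverage and separation. The hypothesis $\varepsilon\leq 7-\sqrt{48}$ is precisely the value at which the discriminant $1-14\varepsilon+\varepsilon^2$ vanishes, so beyond this threshold the critical quadratic loses real roots and the construction would have to be replaced by a different argument.
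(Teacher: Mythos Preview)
Your two constraints---coverage ($D_{\mathbf{0},R}\subset\bigcup_i D_{\mathbf{c}_i,1+\varepsilon}$) and separation ($\|\mathbf{c}_i-\mathbf{c}_j\|\geq 2(1-\varepsilon)$ when the midpoint lies in $D_{\mathbf{0},R}$)---are both correct and are essentially the ingredients the paper packages in its Lemma~\ref{lemma_VR_almost_spherical}. You have also reverse-engineered the right quadratic. But the proposal stops precisely where the work begins: the ``extremal reduction'' you defer is the whole proof, and the strategy you sketch does not close.

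Concretely, picking the pair with deepest midpoint and then invoking separation presupposes that \emph{some} pair has its midpoint inside $D_{\mathbf{0},R}$; nothing you wrote establishes this (centers could a priori sit far out with all pairwise midpoints outside). Even granting separation for one pair $(\mathbf{c}_i,\mathbf{c}_j)$, bringing in a third center $\mathbf{c}_k$ that covers a far boundary point gives only $\|\mathbf{b}-\mathbf{c}_k\|<1+\varepsilon$; you have no relation tying $\mathbf{c}_k$ back to $\mathbf{c}_i,\mathbf{c}_j$, so no inequality in $R$ alone emerges. The vague ``compactness-plus-monotonicity'' would have to rule out all asymmetric configurations and all $n>3$, and you supply no mechanism for either.

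The paper sidesteps this optimisation entirely by using the Voronoi diagram of the center set $P$. A separate lemma shows that $\VD_P$ must have a vertex $\mathbf{v}\in D_{\mathbf{0},1+\varepsilon}$---this existence statement is the non-obvious step your approach is missing. At such a vertex three centers $\mathbf{p}_1,\mathbf{p}_2,\mathbf{p}_3$ are equidistant from $\mathbf{v}$ at common distance $<1+\varepsilon$. By pigeonhole two of them make an angle $\leq\tfrac{2\pi}{3}$ at $\mathbf{v}$; the law of cosines then produces an explicit point $\mathbf{y}$ at distance $\tfrac{1+\varepsilon}{2}-\tfrac{1}{2}\sqrt{1-14\varepsilon+\varepsilon^2}$ from $\mathbf{v}$ with $\|\mathbf{y}-\mathbf{p}_i\|<1-\varepsilon$ for both. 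The hypothesis on $R$ is exactly the triangle-inequality bound guaranteeing $\mathbf{y}\in D_{\mathbf{0},R}$, and any $\mathbf{x}\in X$ within $\varepsilon$ of $\mathbf{y}$ then lies in $D_{\mathbf{p}_1}\cap D_{\mathbf{p}_2}$. No configuration optimisation is needed: the Voronoi vertex hands you three centers at a common bounded distance for free, and the angle pigeonhole replaces your missing extremal argument with one line.
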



The proof of Proposition~\ref{prop:concrete_no_instances} is based on properties of Voronoi diagrams. For the following definition and an overview of Voronoi diagrams, we refer to~\cite[Ch.~6]{JoswigTheobald2013}.
\begin{definition}\label{def: Voronoi region and Voronoi diagram}
    Let~$P \in \mathscr{X}$. The \textit{Voronoi region of~$p \in P$} is given by
  \begin{equation}
      \VR_P(\mathbf{p}) \coloneqq \left\{\mathbf{x} \in \R^2 \,\middle|\, \norm{\mathbf{x}-\mathbf{p}} \leq \norm{\mathbf{x}-\mathbf{q}} \text{ for all } \mathbf{q} \in P\right\}.
  \end{equation}
  It can be shown that every Voronoi region is a polyhedron and that the intersection of two Voronoi regions is a face of both. Therefore, the set of Voronoi regions
  \begin{equation}
      \left\{ \VR_P(\mathbf{p}) \subset \R^2 \,\middle|\, \mathbf{p} \in P\right\}
  \end{equation} 
  forms the set of maximal cells of a polyhedral complex, which is called the \textit{Voronoi diagram of~$P$} and is denoted by~$\VD_P$.
\end{definition}

We say that a set~$P \in \mathscr{X}$ \textit{induces} an exact cover of~$X \in \mathscr{X}$ if~$\mathscr{D}_P$ is an exact cover of~$X$. The following Lemma~asserts that a set~$P \in \mathscr{X}$ that induces an exact cover of~$X \in \mathscr{X}$ has ``almost spherical'' Voronoi regions in the~$\varepsilon$-extension of~$X$.

\begin{lemma} \label{lemma_VR_almost_spherical}
    Let~$\varepsilon \in (0, 1)$. If~$P \in \mathscr{X}$ induces an exact cover of~$X\in\mathscr{X}$, then for every~$\mathbf{p} \in P$ we have
    \begin{equation}
        \left(D_{\mathbf{p}, 1-\varepsilon} \cap X_\varepsilon\right) \subset \left(\VR_P(\mathbf{p}) \cap X_\varepsilon\right) \subset D_{\mathbf{p}, 1+\varepsilon}.
    \end{equation}
\end{lemma}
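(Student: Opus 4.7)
The plan is to prove the two set inclusions separately, both as short consequences of the triangle inequality combined with the exact-cover hypothesis. Because $X$ is finite, the infimum in the definition of $\dist(\mathbf{y},X)$ is attained, so membership of $\mathbf{y}$ in $X_\varepsilon$ always furnishes a witness $\mathbf{x}\in X$ with $\|\mathbf{y}-\mathbf{x}\|\leq\varepsilon$. I will use this witness in both directions.

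For the inner inclusion $D_{\mathbf{p},1-\varepsilon}\cap X_\varepsilon\subset \VR_P(\mathbf{p})\cap X_\varepsilon$, I would fix $\mathbf{y}$ in the left-hand side, choose such a witness $\mathbf{x}$, and apply the triangle inequality to obtain $\|\mathbf{x}-\mathbf{p}\|<(1-\varepsilon)+\varepsilon=1$, so $\mathbf{x}\in D_\mathbf{p}$. Because $P$ induces an \emph{exact} cover of $X$, no other site $\mathbf{q}\in P\setminus\{\mathbf{p}\}$ may cover $\mathbf{x}$, which forces $\|\mathbf{x}-\mathbf{q}\|\geq 1$ for every such $\mathbf{q}$. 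A second triangle inequality then yields $\|\mathbf{y}-\mathbf{q}\|\geq 1-\varepsilon>\|\mathbf{y}-\mathbf{p}\|$, so $\mathbf{p}$ is the closest site of $P$ to $\mathbf{y}$ and $\mathbf{y}\in\VR_P(\mathbf{p})$.

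For the outer inclusion $\VR_P(\mathbf{p})\cap X_\varepsilon\subset D_{\mathbf{p},1+\varepsilon}$, I would again fix $\mathbf{y}$ in the left-hand side with witness $\mathbf{x}\in X$. This time the exact-cover hypothesis is used in its positive form: \emph{some} site $\mathbf{q}\in P$ satisfies $\|\mathbf{x}-\mathbf{q}\|<1$, and hence $\|\mathbf{y}-\mathbf{q}\|<1+\varepsilon$ by the triangle inequality. Since $\mathbf{y}\in\VR_P(\mathbf{p})$, the Voronoi condition gives $\|\mathbf{y}-\mathbf{p}\|\leq\|\mathbf{y}-\mathbf{q}\|<1+\varepsilon$, which is precisely $\mathbf{y}\in D_{\mathbf{p},1+\varepsilon}$.

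There is no real obstacle; the whole argument is four applications of the triangle inequality, where each side uses one half of the exact-cover property (non-coverage by competing sites for the inner inclusion, existence of a covering site for the outer one). The assumption $\varepsilon\in(0,1)$ is used only so that $1-\varepsilon>0$ keeps the inner disk non-degenerate and the bound $\|\mathbf{y}-\mathbf{q}\|\geq 1-\varepsilon$ meaningful. If anything is worth double-checking in the write-up, it is the open/closed conventions, since disks $D_{\mathbf{c},r}$ are open while $X_\varepsilon$ is closed, but the strict inequality $\|\mathbf{x}-\mathbf{p}\|<1$ propagates through every step as needed.
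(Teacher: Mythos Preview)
Your proposal is correct and essentially identical to the paper's proof: both inclusions are obtained from the triangle inequality together with, respectively, the uniqueness and the existence half of the exact-cover hypothesis. The only cosmetic difference is that for the inner inclusion the paper fixes a $\mathbf{q}$ with $\mathbf{y}\in\VR_P(\mathbf{q})$ and shows $\mathbf{q}=\mathbf{p}$, whereas you verify directly that every $\mathbf{q}\neq\mathbf{p}$ is strictly farther from $\mathbf{y}$; the underlying estimates are the same.
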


\begin{proof}
    Let~$\mathbf{y} \in D_{\mathbf{p}, 1-\varepsilon} \cap X_\varepsilon$. By definition of~$X_\varepsilon$, there exists a point~$\mathbf{x} \in X$ with~$\norm{\mathbf{x}-\mathbf{y}} \leq \varepsilon$. Since~$\mathbf{y} \in D_{\mathbf{p}, 1-\varepsilon}$, we have~$\mathbf{x} \in D_{\mathbf{p}}$. Let~$\mathbf{q} \in P$ with~$\mathbf{y} \in \VR_P(\mathbf{q})$. Because~$\norm{\mathbf{y}-\mathbf{q}} \leq \norm{\mathbf{y}-\mathbf{p}}$ and~$\mathbf{y} \in D_{\mathbf{p}, 1-\varepsilon}$, we also have~$\mathbf{x} \in D_{\mathbf{q}}$. Using the fact that~$\mathscr{D}_P$ is an exact cover of~$X$, it follows that~$\mathbf{p}=\mathbf{q}$.
    
    Now let~$\mathbf{y} \in \VR_P(\mathbf{p}) \cap X_\varepsilon$. Again, there is an~$\mathbf{x} \in X$ with~$\norm{\mathbf{x}-\mathbf{y}} \leq \varepsilon$. Because~$\mathscr{D}_P$ is an exact cover of~$X$, there exists a~$\mathbf{q} \in P$ with~$\mathbf{x} \in D_{\mathbf{q}}$ and hence we have
    \begin{equation*}
        1 > \norm{\mathbf{x}-\mathbf{q}} \geq \norm{\mathbf{q}-\mathbf{y}} - \norm{\mathbf{x}-\mathbf{y}} \geq \norm{\mathbf{p}-\mathbf{y}} - \varepsilon. \qedhere
    \end{equation*}
\end{proof}

The basic idea of our proof is that three ``almost spherical'' Voronoi regions cannot meet in a vertex~$\mathbf{v}$ of~$\VD_P$. The following Lemma~establishes the existence of such a vertex.

\begin{lemma} \label{lemma_vertex_in_disk}
    Let~$\varepsilon \in \left(0, \frac{1}{3}\right)$ and~$X \in \mathscr{X}$ be an~$\varepsilon$-net of~$D_{\mathbf{0},1+\varepsilon}$. If~$P \in \mathscr{X}$ induces an exact cover of~$X$, then~$\VD_P$ has a vertex~$\mathbf{v} \in D_{\mathbf{0},1+\varepsilon}$.
\end{lemma}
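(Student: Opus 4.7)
The plan is proof by contradiction: assume $\VD_P$ has no vertex in $D := D_{\mathbf{0},1+\varepsilon}$. Since $X_\varepsilon$ is closed and contains $D$, it also contains $\overline{D}$, so Lemma~\ref{lemma_VR_almost_spherical} applied to $\overline{D}$ gives $\VR_P(\mathbf{p}) \cap \overline{D} \subset D_{\mathbf{p},1+\varepsilon}$ for every $\mathbf{p} \in P$. First I would use this to rule out $\mathbf{0} \in P$: if $\mathbf{0} \in P$, then $\VR_P(\mathbf{0}) \cap \partial D \subset D_{\mathbf{0},1+\varepsilon} \cap \partial D = \emptyset$ (the open disk $D_{\mathbf{0},1+\varepsilon}$ does not meet its own boundary $\partial D$), so the closed connected set $\VR_P(\mathbf{0}) \ni \mathbf{0}$ lies entirely inside $D$. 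Then $\VR_P(\mathbf{0})$ is a bounded convex polyhedron, and its nonempty vertex set consists of Voronoi vertices inside $D$, contradiction. So $\mathbf{0} \notin P$, which in turn implies that for every $\mathbf{p} \in P$ the arc $\partial D \cap D_{\mathbf{p},1+\varepsilon}$ has angular length $2\arccos\bigl(\|\mathbf{p}\|/(2(1+\varepsilon))\bigr) < \pi$ (two circles of the same radius $1+\varepsilon$ about distinct centers).

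Next I would exploit the combinatorial structure inside $D$. Under the no-vertex assumption, every Voronoi edge meeting $D$ is a full chord of $D$ (no interior endpoint is available), and two such chords cannot cross inside $D$ (a crossing would itself be a Voronoi vertex). If there are $k$ chords, they partition $D$ into $k+1$ convex pieces, each contained in a single Voronoi region, and each such piece must touch $\partial D$ (otherwise it is bounded by Voronoi edges alone and has a Voronoi vertex inside $D$). On the other hand, the Voronoi regions partition $\partial D$ (total angular length $2\pi$), and each contributes length $<\pi$ by the previous step, so at least three distinct Voronoi regions touch $\partial D$. Combining these two facts, $k+1 \geq 3$, i.e., $k \geq 2$.

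The main obstacle is converting $k \geq 2$ into the desired contradiction. A clean double-count finishes the argument in the generic case where every Voronoi region that touches $\partial D$ does so in a single connected arc: the $k+1$ regions then contribute exactly $k+1$ arcs, whereas the $2k$ chord-endpoints split $\partial D$ into $2k$ arcs, forcing $k+1 = 2k$, i.e., $k=1$, contradicting $k \geq 2$. The remaining case — where some Voronoi region $\VR_P(\mathbf{p})$ meets $\partial D$ in two or more disjoint arcs, all crammed into its single facing arc of length $<\pi$ — is the hard part. It requires a geometric case analysis that crucially uses $\varepsilon < 1/3$: combining the two containments of Lemma~\ref{lemma_VR_almost_spherical} yields $\|\mathbf{p}-\mathbf{q}\| \geq 2(1-\varepsilon) > 1+\varepsilon$ for any two sites in $D$, which sharply limits the admissible configurations. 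In the representative sub-case of three collinear sites, a direct computation with the containment $\VR_P(\mathbf{p})\cap\overline{D}\subset D_{\mathbf{p},1+\varepsilon}$ on the extreme boundary points of $D$ along the line of sites forces the middle site to coincide with $\mathbf{0}$, again contradicting $\mathbf{0}\notin P$, and an analogous analysis handles the non-collinear and higher-$m$ configurations.
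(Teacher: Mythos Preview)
Your chord-counting approach is genuinely different from the paper's, but it has a real gap. Notice that your ``generic'' case is vacuous: with $k$ non-crossing chords and $2k$ (distinct) endpoints, $\partial D$ is split into exactly $2k$ arcs distributed among only $k+1$ convex pieces, so as soon as $k\geq 2$ some piece \emph{must} receive at least two arcs. In other words, the equality $k+1=2k$ you derive in the generic case is just a restatement of $k=1$, which you have already excluded. The entire weight of the proof therefore falls on the ``non-generic'' case, and that part is only a sketch. The collinear sub-case is asserted (``forces the middle site to coincide with~$\mathbf{0}$'') but not computed, and nothing in the hypotheses prevents the middle site from sitting slightly off-centre while still satisfying $\VR_P(\mathbf{p}_2)\cap\overline D\subset D_{\mathbf{p}_2,1+\varepsilon}$; the phrase ``an analogous analysis handles the non-collinear and higher-$m$ configurations'' is a promise, not an argument. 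Since the non-generic case is the \emph{only} case, the proof as written is incomplete.

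The paper sidesteps all of this combinatorics. It picks one site $\mathbf{p}\in P\cap D_{\mathbf{0},1+\varepsilon}$ (which exists because some $\mathbf{x}\in X$ lies in $D_{\mathbf{0},\varepsilon}$ and is covered by some $D_{\mathbf{p}}$), lets $\mathbf{y}_1,\mathbf{y}_2$ be the two points where the perpendicular bisector of $\mathbf{0}$ and $\mathbf{p}$ meets $\partial D$, and observes that the far arc $A$ between them lies outside $\VR_P(\mathbf{p})$ by the second inclusion of Lemma~\ref{lemma_VR_almost_spherical}. Under the no-vertex assumption this forces the whole chord $\conv\{\mathbf{y}_1,\mathbf{y}_2\}$ outside $\VR_P(\mathbf{p})$. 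But the midpoint of that chord is $\tfrac{\mathbf{p}}{2}$, and
\[
\left\|\mathbf{p}-\tfrac{\mathbf{p}}{2}\right\|=\tfrac{1}{2}\|\mathbf{p}\|<\tfrac{1+\varepsilon}{2}<1-\varepsilon
\]
(this is exactly where $\varepsilon<\tfrac{1}{3}$ is used), so the \emph{first} inclusion of Lemma~\ref{lemma_VR_almost_spherical} puts $\tfrac{\mathbf{p}}{2}$ back into $\VR_P(\mathbf{p})$ --- a direct contradiction with no case analysis.
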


\begin{proof}
    Because~$X$ is an~$\varepsilon$-net of~$D_{\mathbf{0},1+\varepsilon}$, there exists a point~$\mathbf{x} \in X \cap D_{\mathbf{0},\varepsilon}$. Moreover, since~$\mathscr{D}_P$ is an exact cover of~$X$, there exists a~$\mathbf{p} \in P$ with~$\mathbf{x} \in D_{\mathbf{p}}$ and hence~$\mathbf{p} \in D_{\mathbf{0},1+\varepsilon}$. Let~$L \subset \R^2$ be the perpendicular bisector of~$\mathbf{p}$ and~$\mathbf{0}$. Then the set~$L \cap \bd D_{\mathbf{0},1+\varepsilon}$ contains exactly two points~$\mathbf{y}_1,\mathbf{y}_2$, both of which have distance~$1+\varepsilon$ to~$\mathbf{p}$ (see Figure~\ref{fig: vertex_in_disk}). Let~$A$ be the circular arc of~$\bd D_{\mathbf{0},1+\varepsilon}$ that goes from~$\mathbf{y}_1$ to~$\mathbf{y}_2$ and is on the opposite site of~$\mathbf{p}$. All points in~$A$ have distance at least~$1+\varepsilon$ to~$\mathbf{p}$. By Lemma~\ref{lemma_VR_almost_spherical}, we have~$A \subset \R^2 \setminus \VR_{P}(\mathbf{p})$. If~$\VD_P$ has no vertex in~$D_{\mathbf{0},1+\varepsilon}$, then the line segment~$\conv\{\mathbf{y}_1,\mathbf{y}_2\}$ is contained in~$\R^2 \setminus \VR_{P}(\mathbf{p})$. 
    In particular, the point~$\frac{\mathbf{y}_1+\mathbf{y}_2}{2} \in X_\varepsilon$ is contained in~$\R^2 \setminus \VR_{P}(\mathbf{p})$. Then, by Lemma~\ref{lemma_VR_almost_spherical}, we get~$\frac{\mathbf{y}_1+\mathbf{y}_2}{2} \notin D_{\mathbf{p}, 1 - \varepsilon}$. But this is a contradiction to
    \begin{equation*}
        \left\Vert \mathbf{p}-\frac{\mathbf{y}_1+\mathbf{y}_2}{2} \right\Vert = \frac{1}{2}\norm{\mathbf{p}-\mathbf{0}}<\frac{1+\varepsilon}{2}<\frac{2}{3}<1-\varepsilon. \qedhere
    \end{equation*}

    \begin{figure}[t]
    \centering
    \includegraphics{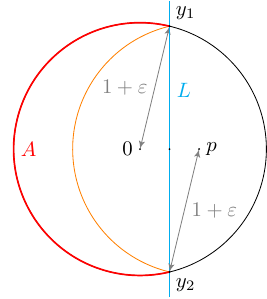}
    \caption{Since~$A$ and~$\VR_P(\mathbf{p})$ are disjoint,~$\VR_P$ either has a vertex in~$D_{\mathbf{0},1+\varepsilon}$ or it does not contain the midpoint of~$\mathbf{0}$ and~$\mathbf{p}$, leading to a contradiction to Lemma~\ref{lemma_VR_almost_spherical}.}
    \label{fig: vertex_in_disk}
\end{figure}
\end{proof}

\begin{proof}[Proof of Proposition~\ref{prop:concrete_no_instances}] 
     Let~$X \in \mathscr{X}$ be an~$\varepsilon$-net of~$D_{\mathbf{0},R}$. We assume towards a contradiction that there exists an exact cover~$\mathscr{D}_P$ of~$X$. Because~$R>1+\varepsilon$,~$X$ is also an~$\varepsilon$-net of~$D_{\mathbf{0},1+\varepsilon}$.
     By Lemma~\ref{lemma_vertex_in_disk},~$\VD_P$ has a vertex~$\mathbf{v} \in D_{\mathbf{0},1+\varepsilon}$, so there are three distinct points~$\mathbf{p}_1,\mathbf{p}_2,\mathbf{p}_3 \in P$ with
     \begin{equation}
        \mathbf{v} \in \textstyle\VR_P(\mathbf{p}_1) \cap  \VR_P(\mathbf{p}_2) \cap \VR_P(\mathbf{p}_3).
     \end{equation}
     By Lemma~\ref{lemma_VR_almost_spherical}, we have
     \begin{equation} \label{eq_prop_concrete_no_instance_1_epsilon_bound}
        1+\varepsilon > \norm{\mathbf{v}-\mathbf{p}_1} = \norm{\mathbf{v}-\mathbf{p}_2} =\norm{\mathbf{v}-\mathbf{p}_3}.
     \end{equation}
     Without loss of generality, we assume that the smaller angle between the vectors~$\mathbf{p}_1-\mathbf{v}$ and~$\mathbf{p}_2-\mathbf{v}$ is at most~$\frac{2\pi}{3}$ (see Figure~\ref{fig: epsilon_blocker_bound}). Because~$\varepsilon\leq 7-\sqrt{48}$, we have~$1-14\varepsilon+\varepsilon^2\geq 0$ and hence we can set
     \begin{equation}
        \mathbf{y} \coloneqq \mathbf{v}+\left(\frac{1+\varepsilon}{2}-\frac{1}{2}\sqrt{1-14\varepsilon+\varepsilon^2}\right) \frac{\frac{\mathbf{p}_1+\mathbf{p}_2}{2}-\mathbf{v}}{\left\Vert \frac{\mathbf{p}_1+\mathbf{p}_2}{2}-\mathbf{v} \right\Vert}.
     \end{equation}

    \begin{figure}[h!]
        \centering
        \includegraphics{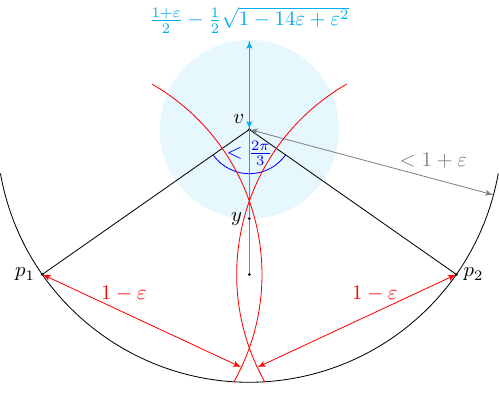}
        \caption{The point~$\mathbf{y}$ is contained in~$D_{\mathbf{p}_1,1-\varepsilon} \cap D_{\mathbf{p}_2,1-\varepsilon} \cap D_{\mathbf{0},R}$, which leads to a contradiction to Lemma~\ref{lemma_VR_almost_spherical}.}
        \label{fig: epsilon_blocker_bound}
    \end{figure}
     
     Let~$i \in \{1,2\}$. We denote the smaller angle between the vectors~$\mathbf{p}_i-\mathbf{v}$ and~$\mathbf{y}-\mathbf{v}$ by~$\theta$. By the law of cosines, we have
     \begin{equation}
        \norm{\mathbf{y}-\mathbf{p}_i}^2 = \norm{\mathbf{v}-\mathbf{p}_i}^2 + \norm{\mathbf{y}-\mathbf{v}}^2 - 2\cdot\norm{\mathbf{v}-\mathbf{p}_i} \cdot \norm{\mathbf{y}-\mathbf{v}} \cdot\cos(\theta).
    \end{equation}
    Since~$\theta \in \left[0,\frac{\pi}{3}\right]$, we have~$\cos(\theta) \geq \frac{1}{2}$. Using~\eqref{eq_prop_concrete_no_instance_1_epsilon_bound} and the fact that~$\norm{\mathbf{v}-\mathbf{p}_i} \geq \norm{\mathbf{y}-\mathbf{v}}$, we obtain
    \begin{align*}
        \norm{\mathbf{y}-\mathbf{p}_i}^2&\leq \norm{\mathbf{v}-\mathbf{p}_i}^2 + \norm{\mathbf{y}-\mathbf{v}}^2 - \norm{\mathbf{v}-\mathbf{p}_i} \cdot \norm{\mathbf{y}-\mathbf{v}}\\
        &= \norm{\mathbf{v}-\mathbf{p}_i}\cdot(\norm{\mathbf{v}-\mathbf{p}_i}- \norm{\mathbf{y}-\mathbf{v}}) + \norm{\mathbf{y}-\mathbf{v}}^2\\
        &<(1+\varepsilon) \left((1+\varepsilon)-\left(\tfrac{1+\varepsilon}{2}-\frac{1}{2}\sqrt{1-14\varepsilon+\varepsilon^2}\right)\right) +\left(\tfrac{1+\varepsilon}{2}-\frac{1}{2}\sqrt{1-14\varepsilon+\varepsilon^2}\right)^2\\
        &=(1-\varepsilon)^2.
    \end{align*}
    
    Because~$\mathbf{y} \in D_{\mathbf{0},R}$, there exists a point~$\mathbf{x} \in X$ with~$\norm{\mathbf{x}-\mathbf{y}}\leq \varepsilon$. By the triangle inequality, we have~$\mathbf{x} \in D_{\mathbf{p}_1} \cap D_{\mathbf{p}_2}$, which contradicts the assumption that~$\mathscr{D}_P$ is an exact cover of~$X$.
\end{proof}

We now obtain Theorem~\ref{thm: upper bound} as a corollary to Proposition~\ref{prop:epsilon_blocker_bound} by setting~$\varepsilon \coloneqq 7-\sqrt{48}$ and~$R\coloneqq \frac{3}{2}\left(1+\varepsilon\right) \approx 1.608$.

\begin{proof}[Proof of Theorem~\ref{thm: upper bound}]
    We set~$\varepsilon \coloneqq 7-\sqrt{48}$ and~$R\coloneqq\frac{3}{2} (1+\varepsilon)$. Using Proposition~\ref{prop:epsilon_blocker_bound}, we obtain that~$D_{\mathbf{0},R}$ is an~$\varepsilon$-blocker. Now Proposition~\ref{prop:epsilon_blocker_bound} asserts that
    \begin{equation}
        \widehat{\sigma}_{2} < N(D_{\mathbf{0},R},\varepsilon).
    \end{equation}
    It is well-known that the hexagonal lattice~$A_2$ has covering radius~$\frac{2}{\sqrt{3}}$. In particular, we have
    \begin{equation}
        \R^2 = \frac{\varepsilon\sqrt{3}}{2}A_2 + D_{\mathbf{0},\varepsilon}.
    \end{equation}
    This implies that for every~$\mathbf{y} \in \R^2$, the set
    \begin{equation} 
        X(\mathbf{y}) \coloneqq \left(\frac{\varepsilon\sqrt{3}}{2}A_2+\mathbf{y}\right) \cap D_{\mathbf{0}, R+\varepsilon}
    \end{equation}
    is an~$\varepsilon$-net of~$D_{\mathbf{0},R}$.

    Using a computer, one can verify that
    \begin{equation*}
        \widehat{\sigma}_{2} < \left|X\left(\begin{pmatrix}\hphantom{+}0.035\\-0.055\end{pmatrix}\right)\right|=657. \qedhere
    \end{equation*}
\end{proof}


\section{Conclusion}

Our main result (Theorems~\ref{thm: lower bound} and~\ref{thm: upper bound}) is~$17 \leq \widehat{\sigma}_{2} \leq 656$. An approach for improving the upper bound could be to search for small~$\varepsilon$-nets of~$\varepsilon$-blockers and to use Proposition~\ref{prop:epsilon_blocker_bound}.

The problem of finding~$\widehat{\sigma}_{2}$ admits generalizations to all dimensions~$d \geq 1$ and convex bodies~$K \subset \mathbb{R}^{d}$. Let~$\sigma\left(K\right)$ and~$\widehat{\sigma}\left(K\right)$ be the largest~$n$ such that any~$n$-point set in~$\mathbb{R}^{d}$ can be covered by disjoint translates of~$K$ or exactly covered by translates of~$K$, respectively (and write~$\sigma_{d}$ and~$\widehat{\sigma}_{d}$ if~$K = B^{d}$). Some of our methods, such as the Extension Argument and the parameterized family, have counterparts for other bodies~$K$, but our other methods do not necessarily generalize. The bulldozer needs to be modified if~$K$ is not strictly convex. The proof of Lemma~\ref{lem: medium triangles have four generalized boundary points} is dependent on the geometry of~$B^{2}$, and the redundant disk method also uses specific properties of~$A_{2}$. 

Sphere packings are mostly empty space in high dimensions. Blichfeldt's upper bound of~$\frac{d + 2}{2} \cdot 2^{-\frac{1}{2} d}$ for the maximum packing density (\cite{Blichfeldt1929}, or see Section 6.1 of~\cite{Zong1999}) drops to less than or equal to~$0.5$ for~$d \geq 6$. The density of the densest known packing in~$d = 5$ is also below~$0.5$ (see Table 1.2 in Chapter 1 of~\cite{ConwaySloane1999}, or~\cite{NebeSloane2012}). Therefore, we cannot hope to cover many points by translating a dense packing of unit balls as in Inaba's proof~\cite{Inaba2008_2} (see Appendix~\ref{sec: Inaba proof}). One possible strategy for ``medium'' dimensions around~$5$--$10$ is to choose one of several packings based on the arrangement of~$X$. 

With regard to lines of further research, we mention the computational complexity of disk covering. Considering the algorithmic issues that were discussed in Subsection~\ref{subsec: relation between exact covering and exact hitting}, it is natural to ask the following question: Given~$X \in \mathscr{X}$, is it NP-hard to decide whether~$X$ has an exact cover? Ashok, Basu Roy, and Govindarajan (2020)~\cite{ashokEtAl20} showed that it is NP-hard to decide the following problem: Given a finite set~$\mathscr{R}$ of unit squares and given an~$X \in \mathscr{X}$, is there a subset~$\mathscr{R}'\subset\mathscr{R}$ that exactly covers~$X$? Their proof can be easily adapted for a given family~$\mathscr{D}$ of unit disks. It might also be interesting to study the computational complexity if the number of disks in the exact cover is specified. 

\subsection*{Acknowledgments}

Thanks to Michaela Borzechowski, L\'aszl\'o Kozma, and G\"unter Rote for their useful feedback and helpful discussions. 

\bibliography{bibliography}

\newpage
\appendix
\noindent\textbf{\LARGE Appendix}


\section{Proof of Inaba's result}\label{sec: Inaba proof}

Let~$K$ be a convex body in~$\mathbb{R}^{2}$ and~$C \subset \mathbb{R}^{2}$. The set~$C + K \subseteq \mathbb{R}^{2}$ is called a \textit{packing (set)} of~$K$ if \[\left(\mathbf{y} + K\right) \cap \left(\mathbf{z} + K\right) = \emptyset\] for all~$\mathbf{y}, \mathbf{z} \in C$,~$\mathbf{y} \neq \mathbf{z}$. The proportion of~$\mathbb{R}^{2}$ covered by~$C + K$ is called the \textit{(infinite) packing density} of~$C + K$, denoted by~$\delta\left(K, C\right)$. 

\begin{theorem}[Inaba, 2008~\cite{Inaba2008_2}]\label{thm:inaba}
We have~$\sigma_{2} \geq 10$. 
\end{theorem}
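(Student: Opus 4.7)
The plan is to follow Inaba's probabilistic/averaging argument applied to the hexagonal disk packing, and deduce $\sigma_2\geq 10$ by a direct area comparison. Using the notation of the main text, let $\mathscr{A}_2$ be the unit-disk packing centered at the hexagonal lattice $A_2$, so that adjacent disks of $\mathscr{A}_2$ are tangent (the shortest lattice vector has length $2$) and in particular the disks of $\mathscr{A}_2$ are pairwise disjoint. Its packing density is
\[
\delta\left(B^2,A_2\right)=\frac{\pi}{2\sqrt{3}}\approx 0.9069.
\]
Any translate $\mathbf{t}+\mathscr{A}_2$ is again a packing by disjoint unit disks, so it suffices to show that for every $X\in\mathscr{X}$ with $|X|\leq 10$, there exists a translation vector $\mathbf{t}\in\mathbb{R}^2$ for which every point of $X$ lies in some disk of $\mathbf{t}+\mathscr{A}_2$.

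For a fixed $\mathbf{x}\in\mathbb{R}^2$, a translate $\mathbf{t}+\mathscr{A}_2$ covers $\mathbf{x}$ if and only if $\mathbf{t}\in -\mathbf{x}+R_1^{1}$, where $R_1^{1}$ is the covered region of $\mathbb{R}^2$ by $\mathscr{A}_2$. Equivalently, the set of forbidden translation vectors for $\mathbf{x}$ is
\begin{equation}
-\mathbf{x}+R_0^{1},\label{eq: forbidden set}
\end{equation}
which is $A_2$-periodic, so it is enough to consider translations $\mathbf{t}$ inside a fundamental domain, say $H\coloneqq\VR_{A_2}(\mathbf{0})$. Since the packing density equals $\area(R_1^{1}\cap H)/\area(H)$, each individual forbidden set contributes area
\[
\area\bigl(\bigl(-\mathbf{x}+R_0^{1}\bigr)\cap H\bigr)=\area\bigl(R_0^{1}\cap H\bigr)=(1-\delta)\area(H)
\]
to the bad translations in $H$.

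Now I would take the union bound over the points of $X$: if
\begin{equation}
|X|\cdot (1-\delta)\area(H)<\area(H),\label{eq: general lower bound}
\end{equation}
i.e.\ $|X|<\frac{1}{1-\delta}=\frac{1}{1-\pi/(2\sqrt{3})}\approx 10.741$, then the set of bad translations in $H$ does not cover $H$, and therefore some $\mathbf{t}\in H$ is good for every point of $X$ simultaneously. In particular, for $|X|\leq 10$ such a $\mathbf{t}$ exists and the translate $\mathbf{t}+\mathscr{A}_2$ provides a covering of $X$ by disjoint unit disks. This yields $\sigma_2\geq 10$.

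The argument is mostly routine once the framework is set up; the only points that need a little care are the periodicity reduction (making sure one may work inside the single Voronoi region $H$ rather than all of $\mathbb{R}^2$) and the computation of the hexagonal packing density. The main subtlety, and the reason the bound is tight at $10$ for this method, is that the threshold $\frac{1}{1-\delta}\approx 10.741$ lies just above $10$; any loss in the estimate (for instance, by not using the optimal packing $\mathscr{A}_2$) would immediately break the argument. This is exactly the slack that Subsection~\ref{subsec: parametric covering} later exploits via the parameterized family $\mathscr{A}_2^{\rho}$.
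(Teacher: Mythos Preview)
Your proposal is correct and follows essentially the same approach as the paper: translate the hexagonal unit-disk packing~$\mathscr{A}_2$, observe that each point of~$X$ forbids a set of translations of relative area~$1-\delta(B^2,A_2)\approx 0.0931<\tfrac{1}{10}$, and conclude by a union bound that for~$|X|\leq 10$ some good translation survives. The only cosmetic difference is that you reduce explicitly to the Voronoi cell~$H$, whereas the paper phrases the same bound in terms of proportions of the whole plane.
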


\begin{proof}
    Let~$X \in \mathscr{X}$. Place the disk centers at the points of the hexagonal lattice~$A_{2}$, where the minimum distance between distinct points is~$2$. This packing~$A_{2} + B^{2}$ has a density of 
    \[
        \delta\left(B^{2}, A_{2}\right) = \frac{\pi}{2 \sqrt{3}} \approx 0.9069.
    \]
    Let~$\mathscr{A}_{2} \coloneqq \left\{ D_{\mathbf{c}}\subset\mathbb{R}^{2} \,\middle|\, \mathbf{c} \in A_{2}\right\}$ be the collection of disks with centers at the points of the packing set~$A_{2}$. We want to find a translation vector~$\mathbf{t} \in \mathbb{R}^{2}$ so that the translated collection~$\mathbf{t} + \mathscr{A}_{2} = \left\{\mathbf{t} + D \,\middle|\, D \in \mathscr{A}_{2}\right\}$ of disks covers all points in~$X$. Each point~$\mathbf{x} \in X$ defines a set
    \begin{equation}
        F_{\mathbf{x}} \coloneqq \left\{ \mathbf{t} \in \mathbb{R}^{2} \,\middle|\, \mathbf{x} \notin D \text{ for all } D \in \mathbf{t} + \mathscr{A}_{2}\right\} \label{eq: forbidden set}
    \end{equation}
    of ``forbidden'' translation vectors~$\mathbf{t}$ that result in a translation not covering~$\mathbf{x}$. The proportion of the area of each~$F_{\mathbf{x}}$ in relation to the entire plane is 
    \[
        1 - \delta\left(B^{2}, A_{2}\right) \approx 0.0931 < \frac{1}{10}.
    \] 
    Hence, if~$\lvert X \rvert \leq 10$, the union of all~$F_{\mathbf{x}}$ does not exhaust the entire plane. Now, some translation vector~$\mathbf{t}' \in \mathbb{R}^{2}$ remains that is not part of any~$F_{\mathbf{x}}$,~$\mathbf{x} \in X$, and the corresponding covering~$\mathbf{t}' + \mathscr{A}_{2}$ contains~$X$. Hence~$\sigma_{2} \geq 10$. 
\end{proof}

Inaba's method applies to any lattice~$\Lambda \subset \mathbb{R}^{2}$. The lower bound for~$\sigma_{2}$ obtained by this method is straightforward except for one wrinkle: if~$X \in \mathscr{X}$ with~$\left\vert X \right\vert = n = \frac{1}{1 - \delta\left(B^{2}, \Lambda\right)}$. In this case, we have~$\bigcup_{\mathbf{x} \in X} F_{\mathbf{x}} = \mathbb{R}^{2}$ since each set~$F_\mathbf{x}$ is closed, so no~$\mathbf{t}'$ exists as described above. Hence we are only guaranteed~$\widehat{\sigma}_{2} \geq n - 1$ instead of~$\widehat{\sigma}_{2} \geq n$, for 
\begin{equation}
    \sigma_{2} \geq \left\lceil \frac{1}{1 - \delta\left(B^{2}, \Lambda\right)} \right\rceil - 1. 
    \label{eq: general lower bound}
\end{equation}


\section{Technical proofs}\label{sec: lower bound proofs}

In this section we provide several lengthy or technical proofs that would otherwise cover up the main ideas of Section~\ref{sec: lower bound}. 

\subsection{Technical proofs for the generalized boundary points}\label{subsec: proofs, boundary points}

\begin{proof}[Proof of Proposition~\ref{prop: narrow triangles are small}]
    Write~$T = \conv\left\{\mathbf{v}^{1}, \mathbf{v}^{2}, \mathbf{v}^{3}\right\}$ with the longest side~$\overline{\mathbf{v}^{1} \mathbf{v}^{2}}$. 
    \begin{enumerate}
        \item If~$T$ is a right triangle, then the circumcenter of~$T$ is at the midpoint~$\mathbf{m} \coloneqq \frac{\mathbf{v}^{1} + \mathbf{v}^{2}}{2}$ of the hypotenuse~$\overline{\mathbf{v}^{1} \mathbf{v}^{2}}$. This side has length less than~$2$, so~$R_{T} < 1$ and~$T \subset D_{\mathbf{m}}$. 
        \item If~$T$ is an obtuse triangle, then~$T \subset T_{R}$ for any right triangle~$T_{R}$ with hypotenuse~$\conv\left\{\mathbf{v}^{1}, \mathbf{v}^{2}\right\}$ and which contains~$\mathbf{v}^{3}$. Then by part 1 applied to~$T_{R}$, it follows that~$T \subset T_{R} \subset D_{\mathbf{m}}$. \qedhere
    \end{enumerate}
\end{proof}

\begin{proof}[Proof of Lemma~\ref{lem: three bulldozers cover T}]
    It suffices to show that~$D_{i, j} \cap T \subseteq \BD_{i, j} \cap\, T$. Without loss of generality, we only need to prove the Lemma~for~$i = 1$ and~$j = 2$, and we may assume that~$\mathbf{v}^{1}=\begin{pmatrix}-v \\ \hphantom{+}0\end{pmatrix}$ and~$\mathbf{v}^{2}=\begin{pmatrix}v \\ 0\end{pmatrix}$ as in Definition~\ref{def: bulldozer}. Write 
    \begin{align}
        \BD_{1, 2} & = \left\{\begin{pmatrix}z_{1} \\ z_{2}\end{pmatrix} \in \mathbb{R}^{2} \,\middle|\, \left(z_{1}\right)^{2} + \left(z_{2} + \sqrt{1 - v^{2}}\right)^{2} < 1\right\}, \label{eq: BD formula} \\
        D_{1, 2} & = \left\{\begin{pmatrix}z_{1} \\ z_{2}\end{pmatrix} \in \mathbb{R}^{2} \,\middle|\, \left(z_{1}\right)^{2} + \left(z_{2} + \sqrt{R_{T} - v^{2}}\right)^{2} < R_{T}^{2}\right\}. \label{eq: D formula}
    \end{align}
    Let~$\mathbf{z} \in D_{1, 2} \cap T$. We substitute~\eqref{eq: D formula} into~\eqref{eq: BD formula} for 
    \begin{eqnarray*}
    \left(z_{1}\right)^{2}+\left(z_{2}+\sqrt{1-v^{2}}\right)^{2} & < & R_{T}^{2}-\left(z_{2}+\sqrt{R_{T}^{2}-v^{2}}\right)^{2}+\left(z_{2}+\sqrt{1-v^{2}}\right)^{2}\\
     & = & 1-2z_{2}\left(\sqrt{R_{T}^{2}-v^{2}}-\sqrt{1-v^{2}}\right).
    \end{eqnarray*}
    The rightmost quantity is less than~$1$ since~$z_{2} \geq 0$ and~$R_{T}^{2} > 1$, so~$\mathbf{z} \in \BD_{1, 2} \cap \, T$. 
\end{proof}




\subsection{Technical proofs for the redundant disk method}\label{subsec: proofs, redundant disk}

\begin{proof}[Proof of Proposition~\ref{prop: diagonal lenses are steep}]
    Assume that~$\mathbf{y} = \begin{pmatrix} -\sqrt{3} \\ 1 \end{pmatrix}$ and~$\mathbf{z} = \begin{pmatrix} 0 \\ 0 \end{pmatrix}$. Let~$\mathbf{w}^{\rho}$ be the point of intersection between~$\bd D_{(-\sqrt{3}, 1)^{\mathsf{T}}, \rho}$ and~$\bd D_{\mathbf{0}, \rho}$ with higher~$y$-coordinate (see Figure~\ref{fig: redundant disk slope}); this point exists because~$\rho \geq 1$. We will show that the tangent line~$L^{\rho}\left(\mathbf{w^{\rho}}\right)$ has slope~$\geq 1$. 
    
    \begin{figure}
        \centering
        \includegraphics[width=4.75in]{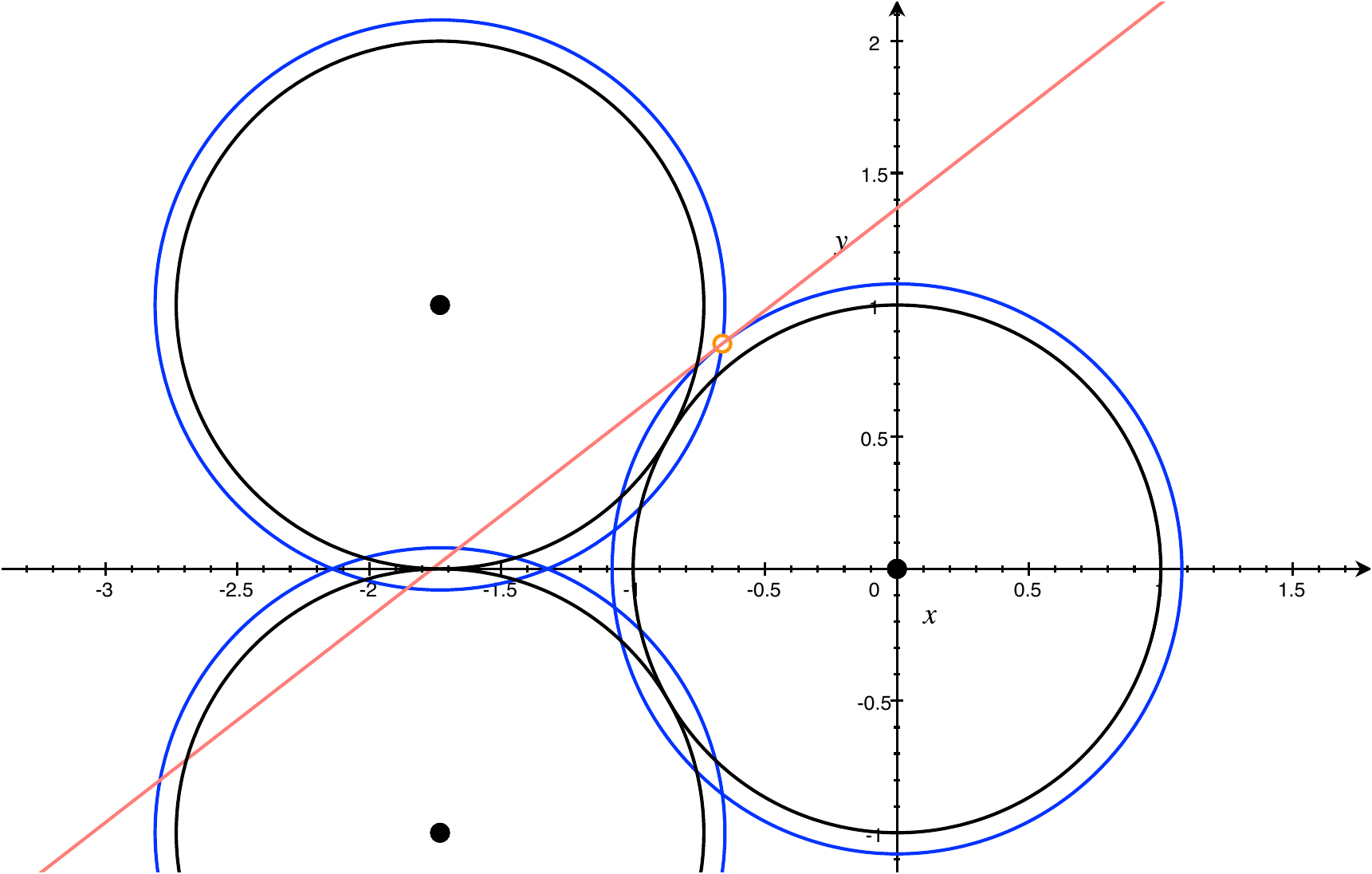}
        \caption{The disks~$D_{\mathbf{w}}$ (black) and~$D_{\mathbf{w}, 1.08}$ (blue) for~$\mathbf{w} = \mathbf{y} = \begin{pmatrix}-\sqrt{3}\\1\end{pmatrix}$ (top left),~$\mathbf{w} = \mathbf{z} = \begin{pmatrix}0\\0\end{pmatrix}$ (lower right), and~$\mathbf{x} = \begin{pmatrix}-\sqrt{3}\\-1\end{pmatrix}$ (bottom left), the point~$\mathbf{w}^{1.08}$ (red), and the line~$L^{1.08}\left(\mathbf{w^{1.08}}\right)$ (red).}
        \label{fig: redundant disk slope}
    \end{figure}

    The general case reduces to this special case by the symmetry of~$A_{2}$ and the circle. 
    
    To calculate the coordinates of the intersection point~$\mathbf{w}^{\rho}$, we parameterize the boundary circles of~$D_{\mathbf{y}}$ and~$D_{\mathbf{z}}$ by 
    \begin{align*}
        \mathbf{p}_{\mathbf{y}}\left(\phi\right) & =
        \begin{pmatrix}
            -\sqrt{3} \\
            1
        \end{pmatrix}
        +\rho
        \begin{pmatrix}
            \hphantom{+}\cos\left(\phi-\frac{2\pi}{3}\right) \\
            -\sin\left(\phi-\frac{2\pi}{3}\right)
        \end{pmatrix}, \\
        \mathbf{p}_{\mathbf{z}}\left(\phi\right) & =\rho
        \begin{pmatrix}
            \cos\phi \\
            \sin\phi
        \end{pmatrix},
    \end{align*}
    for~$\phi \in \left[0, 2 \pi\right]$, so that
    \begin{align*}
        D_{\mathbf{y}} & =\left\{ \mathbf{p}_{\mathbf{y}}\left(\phi\right)\,\middle|\,0\leq\phi\leq2\pi\right\}, \\
        D_{\mathbf{z}} & =\left\{ \mathbf{p}_{\mathbf{z}}\left(\phi\right)\,\middle|\,0\leq\phi\leq2\pi\right\}.
    \end{align*}
    The parameterization of~$D_{\mathbf{z}}$ is as usual, but the parameterization of~$D_{\mathbf{y}}$ is offset and moves clockwise so that~$\mathbf{p}_{\mathbf{y}}\left(\phi\right)$ and~$\mathbf{p}_{\mathbf{z}}\left(\phi\right)$ are always equidistant from the tangent line common to~$D_{\mathbf{y}}$ and~$D_{\mathbf{z}}$. In particular,~$\mathbf{w}^{\rho} = \mathbf{p}_{\mathbf{y}}\left(\phi\right) = \mathbf{p}_{\mathbf{z}}\left(\phi\right)$ for the smallest positive~$\phi$ such that~$\mathbf{p}_{\mathbf{y}}\left(\phi\right) = \mathbf{p}_{\mathbf{z}}\left(\phi\right)$. This equality is equivalent to the system 
    \begin{align*}
        \rho\cos\phi & =-\sqrt{3}+\rho\cos\left(\phi-\frac{2\pi}{3}\right),\\
        \rho\sin\phi & =1-\rho\sin\left(\phi-\frac{2\pi}{3}\right).
    \end{align*}
    The smallest positive solution of either equation is given by
    \[
        \phi = \phi_{\rho} \coloneqq \frac{5 \pi}{6} - \arcsec\left(\rho\right),
    \]
    which yields
    \[
        \mathbf{w}^{\rho} = \mathbf{p}_{\mathbf{z}}\left(\phi_{\rho}\right) = \frac{1}{2}\begin{pmatrix}
        -\sqrt{3} + \sqrt{\rho^{2} - 1} \\ 
        \sqrt{3} \sqrt{\rho^{2} - 1} + 1
        \end{pmatrix}.
    \]
    Hence~$L^{\rho}\left(\mathbf{w^{\rho}}\right)$ is given by the equation 
    \[
        y = \frac{\sqrt{3} - \sqrt{\rho^{2} - 1}}{\sqrt{3} \sqrt{\rho^{2} - 1} + 1} \left(x - \frac{1}{2} \left(-\sqrt{3} + \sqrt{\rho^{2} - 1}\right)\right) + \frac{1}{2} \left(\sqrt{3} \sqrt{\rho^{2} - 1} + 1\right),
    \]
    whose slope is greater than or equal to~$1$ if~$\rho \leq \sqrt{6} - \sqrt{2}$. 
\end{proof}

\begin{proof}[Proof of Proposition~\ref{prop: a diagonal lens gives a redundant disk}]
    Note that~$R^{\rho}\left(\mathbf{y}, \mathbf{z}\right)$ is a subset of the half-space~$S \coloneqq \left\{\mathbf{w} \in \mathbb{R}^{2} \,\middle|\, w_{1} \leq w_{1}^{\rho}\right\}$, where~$\mathbf{w}^{\rho}$ is the rightmost ``vertex'' of~$R^{\rho}\left(\mathbf{y}, \mathbf{z}\right)$ as in the proof of Proposition~\ref{prop: diagonal lenses are steep}. So we may restrict our attention to the subset~$\left(\conv X\right) \cap S$ of~$\conv X$. The interior angle at~$\mathbf{v}^{1}$ is at most~$\frac{\pi}{2}$ and~$\overline{\mathbf{v}^{1} \mathbf{v}^{2}}$ and~$\overline{\mathbf{v}^{1} \mathbf{v}^{3}}$ have equal and opposite slopes, so these slopes have absolute values of at most~$1$. However, by Proposition~\ref{prop: diagonal lenses are steep}, the slope of the upper arc of~$\bd\left(R^{\rho}\left(\mathbf{y}, \mathbf{z}\right)\right)$ is always greater than or equal to~$1$. So, by the Mean Value Inequality and the symmetry of the circle, we have~$\left(\conv X\right) \cap S \subseteq D_{\mathbf{z}}$. 
\end{proof}


\end{document}